\numberwithin{equation}{section}
\def\TT{{\mathbb{T}}}
\def\NN{{\mathbb{N}}}
\def\ZZ{{\mathbb{Z}}}  \def\SS{{\mathbb{S}}}   \def\CC{{\mathbb{C}}}
\def\RR{{\mathbb{R}}}  \def\TT{{\mathbb{T}}}
\def\cG{{\mathcal{G}}}
\def\cJ{{\mathcal{J}}}
\def\scrO{{\mathscr{O}}}
\def\scrQ{{\mathcal{Q}}}
\def\scrT{{\mathscr{T}}}
\def\scrF{{\mathscr{F}}}
\def\scrK{{\mathscr{K}}}
\def\scrR{{\mathscr{R}}}
\def\scrM{{\mathscr{M}}}
\def\scrU{{\mathscr{U}}}
\def\scrV{{\mathscr{V}}}
\def\vecV{\vec V}
\renewcommand{\epsilon}{\varepsilon}
\newcommand{\ip}[1]{\langle #1 \rangle}
\newcommand{\ipp}[1]{\langle \!  \langle #1 \rangle\! \rangle}
\newcommand{\LieTT}{\mathfrak{t}}
\newcommand{\area}{\mathrm{area}}
\newcommand{\id}{\mathrm{id}}
\newcommand{\Diff}{\mathrm{Diff}}
\newcommand{\Ham}{\mathrm{Ham}}
\newcommand{\End}{\mathrm{End}}
\newcommand{\del}{\partial}
\newtheorem{theointro}{Theorem}
\newtheorem{conj}[theointro]{Conjecture}
\newtheorem{corintro}[theointro]{Corollary}
\newtheorem{lemma}[subsubsection]{Lemma}
\newtheorem{prop}[subsubsection]{Proposition}
\newtheorem{cor}[subsubsection]{Corollary}
\newtheorem{theo}[subsubsection]{Theorem}
\newtheorem{dfn}[subsubsection]{Definition}
\theoremstyle{definition}
\newtheorem{rmk}[subsubsection]{Remark}
\newtheorem{question}[subsubsection]{Question}
\newtheorem*{rmk*}{Remark}
\newtheorem*{rmks*}{Remarks}
\newtheorem{rmks}[subsubsection]{Remarks}
\subjclass[2010]{Primary 5299, 53D12; Secondary 39A14,
39A70, 47B39, 53D50,  53D20, 53D30}
\keywords{polyhedral maps, piecewise linear symplectic geometry, Lagrangian
tori, isotropic tori, isotropic immersions, Kähler moment map, moment map flow}
\thanks{Supported by  IRL CRM-CNRS 3457, UQAM, CIRGET, UMR6629,
ANR-21-CE40-0017, CHL 11-LABX-0020-01}
\title{Isotropic surfaces and moment map flow}
\author{François Jauberteau}
\address{François Jauberteau, Laboratoire Jean Leray, Universit\'e de Nantes}
\email{francois.jauberteau@univ-nantes.fr}
\author{Yann Rollin}
\address{Yann Rollin, Laboratoire Jean Leray, Universit\'e de Nantes}
\email{yann.rollin@univ-nantes.fr}
\begin{document}

\begin{abstract}
We consider the moduli  space of isotropic  maps from a  closed
	surface $\Sigma$ to a symplectic affine space and  construct a Kähler moment
	map geometry, on a space of differential forms on $\Sigma$, such that
	the isotropic maps correspond to certain zeroes of the moment map.
	The moment map geometry induces a modified moment map flow, whose
	fixed point set
	correspond to isotropic maps. 
	This construction can be adapted to
	the polyhedral setting. In particular, we prove that the polyhedral
	modified moment map flow induces a strong deformation retraction from the space of polyhedral
	maps onto the space of polyhedral isotropic maps.
\end{abstract}

{\Huge \sc \bf\maketitle}

\section{Introduction}

\subsection{Motivations}
\emph{Symplectic geometry} is the natural mathematical framework for
  \emph{Hamiltonian mechanics}.
Gromov showed that certain symplectic properties
are \emph{flexible},
thanks to \emph{convex integration},
 resulting in \emph{h-principle} theorems~\cite{Gro85}, whereas others are \emph{rigid}, due to
the existence of \emph{pseudoholomorphic curves}~\cite{Gro86}.
In particular the  Gromov-Lees theorem~\cite{Gro86,Lee76,EM02}  establishes an h-principle for the
problem of \emph{isotropic immersions} in a symplectic manifold. 
As a corollary, it is possible to approximate every immersed submanifold of an
affine symplectic space by isotropic immersed submanifolds, with respect to
the
$C^0$ topology.

Our research started as we were
conducting some numerical experiments for
Lagrangian surfaces and their \emph{mean curvature flow}; numerical
simulations naturally involve piecewise linear geometry and
we quickly realized that 
the only  explicit examples of closed piecewise linear Lagrangian 
were the polygons in $\CC$ and some polygonal versions of the Clifford
torus in $\CC^m$.
Assessing the state of the art, we noticed
that very little is known about piecewise
linear symplectic geometry. 
 Then, we
turned to the  existence problem for piecewise
linear isotropic submanifolds in  affine symplectic spaces~\cite{JRT,Rol22}
and  obtained the following approximation theorem:
\begin{theo}[\cite{Rol22}]
	\label{theo:approx}
	Let $f:\Sigma\looparrowright V$ be a smooth isotropic immersion, where
	$\Sigma$ is  a surface diffeomorphic to a quotient torus, and
	$(V,\omega_V)$ is a symplectic affine
	space. Then, there exists a family of piecewise linear maps
	$f_N:\Sigma\to V$ for $N\in\NN$, with the following
	properties: 
	\begin{enumerate}
		\item $f_N:\Sigma\looparrowright V$ is a  topological
	immersion;
	\item $f_N$ converges toward $f$ in $C^1$-norm.
	\end{enumerate}

As a corollary, every smoothly immersed torus in $V$ can be approximated in
$C^0$-norm by isotropic piecewise linear immersed tori.
\end{theo}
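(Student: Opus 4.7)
The plan is to start from a naive piecewise linear interpolation of $f$ on a sequence of refining triangulations, estimate how badly it fails to be isotropic on each simplex, and then apply a small vertex-level correction to enforce isotropy exactly. Since $\Sigma = \RR^2/\Lambda$ is a quotient torus, I will fix shape-regular $\Lambda$-invariant triangulations $\mathcal{T}_N$ of mesh $h_N = O(1/N)$ and let $g_N\colon\Sigma\to V$ be the continuous piecewise affine map agreeing with $f$ at every vertex. Classical finite-element interpolation estimates then give $\|g_N-f\|_{C^0} = O(h_N^2)$ and $\|Dg_N - Df\|_{L^\infty(\mathrm{int}\,T)} = O(h_N)$ uniformly in $T \in \mathcal{T}_N$, so $g_N\to f$ in the piecewise $C^1$ sense demanded by the statement, and since $|Df|\geq c>0$ the map $g_N$ is a topological immersion for $N$ large.

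Next comes the isotropy defect. On each triangle $T=(v_0,v_1,v_2)$ the constant 2-form $g_N^*\omega_V$ has value
$$\alpha_T \;=\; \tfrac12\,\omega_V\bigl(f(v_1)-f(v_0),\,f(v_2)-f(v_0)\bigr).$$
Taylor-expanding $f$ about $v_0$ and using $f^*\omega_V\equiv 0$ kills the leading linear contribution, so $\alpha_T = O(h_N^3)$ while the triangle area is $O(h_N^2)$; the pulled-back symplectic \emph{density} is therefore of order $h_N$, and the defect vector $(\alpha_T)$ is $O(h_N)$ in a suitably rescaled $\ell^\infty$ norm.

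The heart of the proof is to kill $(\alpha_T)$ exactly by a small piecewise linear perturbation. Writing $f_N = g_N + \epsilon_N$ with $\epsilon_N$ piecewise affine determined by vertex values $(\epsilon_v)$ in $V^{|V_N|}$, the condition $f_N^*\omega_V|_T = 0$ becomes one quadratic scalar equation per triangle. Euler characteristic yields $|T_N| = 2|V_N|$ on a triangulated torus, while each vertex carries $\dim V\geq 4$ degrees of freedom, so the system is massively underdetermined. I would linearise at $\epsilon=0$ to obtain an operator $L_N\colon\bigoplus_v V \to \bigoplus_T \RR$ and construct a right inverse of operator norm uniformly bounded in $N$; combined with the $O(h_N^3)$ bound on $(\alpha_T)$ and a control of the quadratic remainder, an inverse-function/contraction argument then produces $\epsilon_N$ with $\epsilon_v = O(h_N^2)$ at every vertex and discrete differential $O(h_N)$. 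The resulting $f_N$ is piecewise linear, isotropic on every triangle, remains a topological immersion thanks to the perturbation bound, and converges to $f$ in $C^1$. The corollary follows by first applying the Gromov--Lees h-principle to $C^0$-approximate an arbitrary smooth immersion by smooth isotropic ones, then invoking the main statement on each approximant and diagonalising.

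The main obstacle will be the uniform bound on $L_N^{-1}$: shape regularity plus smoothness of $f$ should render the linearised discrete isotropy constraint uniformly surjective, but quantifying this with constants independent of the refinement index $N$ is the technical crux. This is precisely the situation in which the discrete moment map framework developed later in the paper---where $(\alpha_T)$ appears as the value of a discrete moment map and $L_N$ as its infinitesimal symmetry action---provides the natural structural tool, and I would expect the modified polyhedral moment map flow to furnish the correction automatically.
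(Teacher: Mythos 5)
Your overall strategy---interpolate $f$ at the vertices, observe that the per-triangle symplectic defect is $O(h_N^3)$ against an area of $O(h_N^2)$, and then remove it by a quantitative inverse-function/fixed-point correction of the vertex values---is in fact the same general scheme as the cited proof (the ``effective fixed point principle'' of Jauberteau--Rollin--Tapie applied to the discrete symplectic density $\mu_N\circ d$). But the step you defer, the uniformly bounded right inverse of the linearised constraint $L_N$, is not a technicality to be expected from shape regularity: it is the entire content of that proof, and your proposal gives no argument for it. Two concrete obstructions show why it cannot be waved through. First, $L_N$ is never surjective onto $\bigoplus_T\RR$: since $\omega_V$ is exact and $f_N$ is continuous, the area-weighted sum of the defects vanishes identically, so $\mu_N\circ d$ has corank at least one and you must prove uniform surjectivity onto the hyperplane orthogonal to constants, with a right inverse whose norm (in correctly $h_N$-rescaled norms, so that $\epsilon_v=O(h_N^2)$ and the discrete differential correction is $O(h_N)$) is independent of $N$. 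Second, the dimension count $|T_N|=2|V_N|$ versus $\dim V\cdot|V_N|$ unknowns does not by itself give uniform invertibility, and it is precisely because this fails to be evident for a generic triangulation that the original construction does not use an arbitrary shape-regular mesh: it starts from $\Lambda$-invariant quadrangulations $\scrQ_N$ and completes them to triangulations $\scrT_N$ by adding a central vertex (pyramid) in each quadrilateral, these extra interior degrees of freedom being what makes the linearised problem uniformly solvable. Indeed, the existence of polyhedral isotropic immersions for a \emph{fixed} triangulation is recorded in this paper as an open question (cf.\ Conjecture~\ref{conj:intro}), so your assertion that smoothness of $f$ plus shape regularity ``should'' yield uniform surjectivity is exactly the unproven crux, and is likely false without a careful choice of $\scrT_N$.

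A second, smaller gap: your closing suggestion that the polyhedral modified moment map flow of this paper would ``furnish the correction automatically'' does not work as stated. The flow $\Theta$ retracts $\scrF_0(\scrT_N)$ onto the isotropic locus, but it comes with no quantitative estimate that $\Theta(+\infty,dg_N)$ stays $C^1$-close to $dg_N$ (closeness is only controlled near \emph{regular} zeros, whose existence here is what Theorem~\ref{theo:mainbis} extracts from the already-constructed $f_N$), it does not preserve monomorphisms, and there are nontrivial flow lines converging to $0$; so immersivity and $C^1$-convergence of the flowed map are not guaranteed. Aside from these points, your defect estimate $\alpha_T=O(h_N^3)$, the interpolation bounds, and the reduction of the corollary via Gromov--Lees are fine and consistent with the cited argument.
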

 Surprisingly, the original proof of Theorem~\ref{theo:approx} relies on an
 \emph{infinite dimensional Kähler moment map 
 geometry} and the \emph{fixed point principle}
 developed in \cite{JRT}.
However, an alternate proof using only on \emph{soft techniques}
 was given subsequently by 
Etourneau~\cite{Eto}, with significant technical simplifications.

Several
authors have been  considering problems of piecewise linear symplectic
geometry,
including Gratza~\cite{Gra} and Bertelson-Distexhe~\cite{BD}.
Panov also introduced 
a notion of \emph{polyhedral Kähler} geometry in~\cite{Pan}. 
The space of piecewise linear symplectic maps of a $4$-torus is also
shown to be related to a \emph{hyperKähler moment map
geometry} in \cite{Rol24}.
It seems that classical results of smooth symplectic differential
geometry are curiously challenging to generalize to the piecewise linear setting.
Perhaps this is a manifestation of symplectic rigidity ? 
To show how much has to be accomplished in the field, here is a short list of challenging open questions:
\begin{enumerate}
	\item The \emph{local Darboux theorem}, well known in the smooth setting,
turns out to be a conjecture for piecewise linear symplectic
		manifolds of dimension at least $4$ (cf.~\cite{BD} for a definition of piecewise
		linear symplectic manifolds).
	\item The \emph{deformation theory of Lagrangian submanifolds} is well described
	in the smooth setting, thanks to the Lagrangian neighborhood
		theorem~\cite{MDS}.
		However, there is no analogue of such a deformation theory
		in the case of
		piecewise linear Lagrangian submanifolds of a symplectic
		affine space. This  motivates the work and partial answers
		of \cite{JRT,Rol22}.
\item
	The \emph{local structure} of the infinite dimensional Lie group of symplectic diffeomorphisms
		is well
		understood~\cite{Ban}. However, very little is known about
		the local (and global) structure of the space of piecewise linear
symplectic maps, even in the case of a $4$-dimensional torus~\cite{Rol24}.
\end{enumerate}
The above questions seem hard settle for an obvious reason: the Moser's
trick generally fails  in the piecewise linear setting, although it may be
be used  with significant efforts, in the case of piecewise linear volume
forms for instance~\cite{BD} for instance. All these complications lead us
to speculate whether  
some \emph{exotica} arise in the context of piecewise linear symplectic
geometry. 

A Kähler moment map geometry inspired by Donaldson~\cite{Don99} was introduced
in \cite{JRT} to construct the approximations $f_N$ of Theorem~\ref{theo:approx}.
The construction starts from a  Kähler surface
$(\Sigma,g_\Sigma,J_\Sigma,\omega_\Sigma)$ and a Hermitian affine space $V$
with symplectic form $\omega_V$.
These structures induce a 
\emph{formal} Kähler  structure $(\scrM,\cG,\cJ,\Omega)$ with Kähler form
$\Omega$, on the moduli
space~$\scrM=C^\infty(\Sigma,V)$.
The group of Hamiltonian diffeomorphisms $\Ham(\Sigma,\omega_\Sigma)$
acts by precomposition on the moduli space space $\scrM$ and preserves the formal Kähler structure.
In fact the action of $\Ham(\Sigma,\omega_\Sigma)$ on $\scrM$ is formally Hamiltonian,
with moment map given by 
$$\mu^D(f)=-
\frac{f^*\omega_V}{\omega_\Sigma},
$$
understood as an element of the Lie algebra of
$\Ham(\Sigma,\omega_\Sigma)$, identified to the space of smooth function
$C_0^\infty(\Sigma,\RR)$ orthogonal to constants.
Thus, zeroes of the moment map $\mu^D$ correspond to 
isotropic maps $f\in\scrM$. A detailed presentation of this moment map
geometry is given 
in~\cite{JRT}.

A finite dimensional approximation $\phi_N$ of the energy functional $\phi$ of the moment
map $\mu^D$ is considered in \cite{JRT}. The downward gradient flow of
$\phi_N$ provides a flow of piecewise linear surfaces.
A numerical version of the flow  and numerical
experiments  carried out in \cite{JRT} provided some  effective examples of piecewise
linear Lagrangian tori in $\CC^2$. Overall, this flow 
of piecewise linear surfaces  has been interesting from an
experimental perspective. However, it is not satisfactory from a
mathematical point of view, for several reasons:
we could not prove that the flow is convergent, although it seems well
behaved numerically. Furthermore, the flow does not come from
 a finite dimensional moment map picture and its
geometrical interpretation is somewhat unclear. 

As a response to these objections,
a new moment map geometry and its
corresponding flow are introduced in this paper. They do
not have any of the above issues: the new moment map geometry can be
immediately adapted to the polyhedral setting and much stronger mathematical results
are obtained. In
particular, Theorem~\ref{theo:main} is a Duistermaat type theorem,
which shows that the  flow has the nicest
possible behavior.

\subsection{Statement of results}
Let $V$ be a Hermitian affine space, with underlying vector space
$\vecV$ and 
and \emph{symplectic form} $\omega_V$.
We consider a \emph{smooth closed  oriented surface} $\Sigma$, endowed with
a \emph{Riemannian
metric} $g_\Sigma$, a \emph{compatible almost complex structure} $J_\Sigma$
and a
corresponding \emph{Kähler form} $\omega_\Sigma=g_\Sigma(J_\Sigma,\cdot,\cdot)$.

The \emph{moduli space of smooth map} $f:\Sigma\to V$ is denoted $\scrM$.
Formally $\scrM$ is an affine space with $\Omega^0(\Sigma,\vecV)$ as the
underlying vector space.
The \emph{differential} of a smooth map $f\in\scrM$ defines an exact
$\vecV$-valued $1$-form $df$ and we have a linear differential operator
$$
d:\scrM\to\scrF=\Omega^1(\Sigma,\vecV).
$$
The image of $\scrM$ by $d$ is the space of exact $1$-forms denoted
$\scrF_0$.
We show that the moduli space $\scrF$ carries a natural Euclidean $L^2$-inner product
$\cG$, defined by Formula~\eqref{eq:dfnGmetric}, 
and a compatible almost complex structure
$\cJ$, defined by Formula~\eqref{eq:dfnJ}. The corresponding Kähler form is
denoted $\Omega=\cG(\cJ\cdot,\cdot)$.

Formula~\eqref{eq:cxaction} defines an action of the gauge group
$\TT^\CC=C^\infty(\Sigma,\CC^*)$ on $\scrF$. The action of the
real subgroup $\TT= C^\infty(\Sigma,\RR)$
is merely the action by complex multiplication on $\vecV$-valued differential
$1$-forms and we have the following result:
\begin{theointro}
	\label{theo:A}
	Let $\Sigma$ be a smooth closed surface endowed with a Kähler
	structure $(\Sigma,g_\Sigma,J_\Sigma,\omega_\Sigma)$ and
	$V$, a
	Hermitian affine space.

	The moduli space $\scrF=\Omega^1(\Sigma,\vecV)$ carries a natural
	Kähler structure $(\scrF,\cG,\cJ,\Omega)$ and an action of the
	gauge group $\TT^\CC=C^\infty(\Sigma,\CC^*)$.

The almost complex structure $\cJ$ is invariant under the action
of $\TT^\CC$. The action of $\TT^\CC$ is the complexification
	of the $\TT$-action, where $\TT=C^\infty(\Sigma,S^1)$ is the real
	subgroup.

	The  $\TT$-action preserves the Kähler structure
	$(\scrF,\cJ,\cG,\Omega)$ and  is Hamiltonian.
	The map $\mu:\scrF\to\LieTT$, where $\LieTT$ is the Lie algebra of
	$\TT$, given by
	$$
	\mu(F)=-\frac {F^*\omega_V}{\omega_\Sigma},
	$$
	is a moment map. In other words, $\mu$ is $\TT$-invariant and, for every $\zeta\in\LieTT$,
$$
	D\ipp{\mu,\zeta}=-\iota_{X_\zeta}\Omega,
$$
where $X_\zeta$ is the vector field on $\scrF$ defined by the infinitesimal action of $\zeta$
	on $\scrF$.
\end{theointro}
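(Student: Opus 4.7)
The plan is to verify the claims in the order they are stated, treating $(\scrF,\cG,\cJ,\Omega)$ as a flat (constant-coefficient) Kähler vector space and reducing gauge invariance, complexification and the moment-map identity to pointwise Hermitian-algebra computations on $T_x\Sigma$ with values in $\vecV$.

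First I would check from \eqref{eq:dfnGmetric} that $\cG$ is a positive-definite symmetric bilinear form on $\scrF=\Omega^1(\Sigma,\vecV)$, and from \eqref{eq:dfnJ} that $\cJ^2=-\id$ with $\cG(\cJ\cdot,\cJ\cdot)=\cG$; both checks are pointwise and reduce to standard properties of the natural Hermitian inner product on $\Hom_\RR(T_x\Sigma,\vecV)$ induced by $(g_\Sigma,J_\Sigma)$ and the Hermitian structure of $\vecV$. Because $\scrF$ is a real vector space and the tensors $\cG,\cJ$ do not depend on the point $F\in\scrF$, the $2$-form $\Omega=\cG(\cJ\cdot,\cdot)$ is automatically parallel, hence closed, and $(\scrF,\cG,\cJ,\Omega)$ is a flat Kähler structure.

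Next I would verify that \eqref{eq:cxaction} actually defines a $\TT^\CC$-action and that its restriction to the real subgroup $\TT$ coincides with pointwise complex multiplication on $\vecV$-valued $1$-forms. Since both this multiplication and $\cJ$ are defined pointwise using the complex linear structure of $\vecV$, they commute fibrewise, which gives $\TT^\CC$-invariance of $\cJ$. Differentiating the action in the directions $\zeta\in\LieTT$ and $i\zeta\in i\LieTT$ and checking the pointwise identity $X_{i\zeta}=\cJ X_\zeta$ establishes that the $\TT^\CC$-action is the complexification of the $\TT$-action. The $\TT$-action is pointwise unitary, which yields invariance of $\cG$ (hence of $\Omega$); invariance of $\mu$ follows since $\omega_V$ is preserved by unitary pointwise multiplication on $\vecV$, so $(e^{i\zeta}F)^*\omega_V=F^*\omega_V$.

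The heart of the argument is the moment-map identity. For $\zeta\in\LieTT$ I would pair $\mu$ with $\zeta$ in the natural $L^2$ Lie-algebra product,
$$\ipp{\mu(F),\zeta}=-\int_\Sigma \zeta\, F^*\omega_V,$$
differentiate in a direction $\dot F\in T_F\scrF\simeq\scrF$ to obtain
$$D\ipp{\mu,\zeta}(\dot F)=-\int_\Sigma \zeta\bigl(\omega_V(\dot F\wedge F)+\omega_V(F\wedge\dot F)\bigr),$$
and then compare this with $-\iota_{X_\zeta}\Omega(\dot F)=-\Omega(X_\zeta(F),\dot F)$, the latter being computed from \eqref{eq:dfnGmetric} and \eqref{eq:dfnJ} after expressing $X_\zeta(F)$ via the linearization of \eqref{eq:cxaction}. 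The main obstacle I expect is precisely this last comparison: unfolding $\Omega(X_\zeta,\dot F)$ into a pointwise density against $\omega_\Sigma$ requires decomposing $F$ and $\dot F$ into their $J_\Sigma$-complex-linear and $J_\Sigma$-antilinear components, using that the pullback $F^*\omega_V$ extracts the Hermitian skew part of the pair $(F,F)$, and matching the two resulting pointwise quadratic expressions in $F,\dot F$. Once this local Hermitian identity is in hand, integration against $\omega_\Sigma$ gives the moment-map equation globally.
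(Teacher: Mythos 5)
Your proposal is correct and follows essentially the same route as the paper: pointwise Hermitian linear algebra on $T^*_x\Sigma\otimes\vecV$, the splitting into $J_\Sigma$-complex-linear and antilinear parts, linearization of the gauge action to get $X_{i\zeta}=\cJ X_\zeta$, and comparison of $D\ipp{\mu,\zeta}$ with $-\iota_{X_\zeta}\Omega$ under the $L^2$ pairing. The ``main obstacle'' you defer is precisely the paper's Lemma~\ref{lemma:mm} in polarized form: introducing the involution $\scrR=i\cJ$, an orthonormal-frame computation gives $F^*\omega_V/\omega_\Sigma=\tfrac12\,g(\scrR F,F)$, and the pointwise self-adjointness of $\scrR$ together with $\cJ(i\zeta F)=\zeta\,\scrR F$ then makes both the differentiation of $\mu$ and the matching with $\Omega(X_\zeta(F),\dot F)$ immediate.
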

\begin{corintro}
	\label{cor:A}
	With the assumptions of Theorem~\ref{theo:A},
a smooth map $f:\Sigma\to V$ is isotropic if, and only if,
$$
	\mu(df)=0.
$$
\end{corintro}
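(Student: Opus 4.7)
The plan is to unwind the definitions and observe that the corollary is an essentially tautological consequence of the moment map formula in Theorem~\ref{theo:A}. First, I would recall that a smooth map $f:\Sigma \to V$ is, by definition, isotropic precisely when the pulled-back symplectic form vanishes, i.e.\ $f^*\omega_V = 0$ as a $2$-form on $\Sigma$. Next, I would observe that the expression $F^*\omega_V$ appearing in the formula $\mu(F) = -F^*\omega_V/\omega_\Sigma$ is, for a $\vecV$-valued $1$-form $F \in \scrF$, the natural $2$-form on $\Sigma$ defined pointwise by $(F^*\omega_V)_x(u,v) = \omega_V(F_x(u), F_x(v))$. Specializing to $F = df$, this yields $(df)^*\omega_V = f^*\omega_V$ by the very definition of the pullback of a form by a smooth map.

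Combining these two remarks, $\mu(df) = -f^*\omega_V/\omega_\Sigma$. Since $(\Sigma, g_\Sigma, J_\Sigma, \omega_\Sigma)$ is Kähler and $\Sigma$ is a surface, $\omega_\Sigma$ is a nowhere vanishing volume form, so the quotient $f^*\omega_V/\omega_\Sigma$ is a well-defined smooth function on $\Sigma$ that vanishes identically if and only if $f^*\omega_V = 0$. This gives the equivalence $\mu(df) = 0 \Longleftrightarrow f^*\omega_V = 0 \Longleftrightarrow f \text{ is isotropic}$. There is no substantial obstacle here; the only content is the observation that the moment map $\mu$ on $\scrF$, when restricted via the embedding $d:\scrM \hookrightarrow \scrF_0 \subset \scrF$, recovers exactly the isotropy condition, which is precisely the raison d'être of the moment map construction of Theorem~\ref{theo:A}.
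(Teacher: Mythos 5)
Your proposal is correct and follows essentially the paper's own route: the paper reduces Corollary~\ref{cor:A} to Corollary~\ref{cor:Atech}, which in turn rests on the identity $\mu(F)=-F^*\omega_V/\omega_\Sigma$ (Lemma~\ref{lemma:mm}) applied to $F=df$, exactly as you do. Taking that identity from the statement of Theorem~\ref{theo:A} and observing $(df)^*\omega_V=f^*\omega_V$ with $\omega_\Sigma$ nowhere vanishing is precisely the intended argument.
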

\begin{rmk}
In the finite dimensional setting, the famous Kempf-Ness theorem relates
	the existence of zeroes of a Kähler moment
map in a complexified orbit with an algebro-geometric notion of
	stability~\cite{KFM}.
	In view Theorem~\ref{theo:A} and Corollary~\ref{cor:A}, it is
	tempting to try to extend this theory in our case:
	the existence of isotropic maps should be related to some kind of algebraic
	condition. The answer turns out to be somewhat trivial for the
	total moduli space $\scrF$, as discussed at \S\ref{sec:kn}. However, 
	 we are mainly  interested in the zeroes of the moment map that
	 belong
	 to the subspace $\scrF_0\subset\scrF$, as they are related to
	 isotropic maps by Corollary~\ref{cor:A}. Unfortunately, $\scrF_0$  is not invariant under the
	gauge group action and it is not clear how to adapt the classical
	theory from this point.
\end{rmk}

We consider the \emph{energy functional} of the moment map
$$
\phi:\scrF\to \RR
$$
given by 
$$
\phi(F)=\frac 12 \|\mu(F)\|^2_{L^2}.
$$
By construction, $\phi$ is non negative and the vanishing locus of $\phi$ agrees with the vanishing
locus of the moment map. Our idea is to interpret the functional $\phi$
as a \emph{Morse-Bott} function on the moduli
space $\scrF$, in the spirit of Atiyah-Bott~\cite{AB}, who considered the case of the Yang-Mills functional. 
The expectation is that the flow is going to produce a \emph{strong
deformation retraction} of $\scrM$ onto the
space of isotropic maps.
However, it is not clear whether $\phi$ satisfies the Morse-Bott condition
in any reasonable sense.
Another difficulty is that the subspace $\scrF_0$ is not gauge invariant
and may not be preserved by the usual Morse-Bott gradient flow. 
We get around this issue by  defining the \emph{modified moment map flow}, as the downward gradient flow of the
restricted functional $\phi:\scrF_0\to\RR$. More precisely, the modified
moment map flow along $\scrF_0$ is given by the evolution equation
$$
\boxed{
	\frac{\del F}{\del t}=-\nabla^\circ \phi(F),
	}
$$
where $F\in\scrF_0$ and $\nabla^\circ\phi$ is the gradient of the
restricted functional.
Our first theorem shows that  the flow is geometrically relevant and is a
well posed problem from an analytical point of view:
\begin{theointro}
	\label{theo:smoothflow}
The fixed point locus of the modified moment map flow on $\scrF_0$, in other
words the critical set of $\phi:\scrF_0\to\RR$, agrees with the vanishing
	locus of $\phi:\scrF_0\to\RR$.
	Furthermore,
	\begin{enumerate}
		\item the modified moment maps flow has the short time
			existence property and
		\item the $L^2$-norm is non increasing along the flow.
	\end{enumerate}
\end{theointro}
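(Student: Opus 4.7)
\emph{Part (1): critical set equals vanishing locus.} The direction ``zero implies critical'' is trivial since $\phi \geq 0$, so the converse is what I must establish. Using $\mu(F) = -F^*\omega_V/\omega_\Sigma$ and $D_F(F^*\omega_V)\cdot \delta F = \omega_V(F \wedge \delta F)$, I first compute
\begin{equation*}
D\phi_F \cdot \delta F \;=\; \int_\Sigma \mu(F)\, D\mu_F(\delta F)\, \omega_\Sigma \;=\; -\int_\Sigma \mu(F)\, \omega_V(F \wedge \delta F).
\end{equation*}
For $F = df \in \scrF_0$ and $\delta F = dg \in \scrF_0$, I would combine the identity $\omega_V(df \wedge dg) = d\omega_V(g, df)$ with Stokes' theorem on the closed surface $\Sigma$ to rewrite this as
\begin{equation*}
D\phi_F(dg) \;=\; \int_\Sigma \omega_V\bigl(g,\; d\mu(F) \wedge df\bigr),
\end{equation*}
and then invoke the non-degeneracy of $\omega_V$ on $\vecV$ to conclude that $F$ is critical iff $d\mu(F)\wedge df = 0$ pointwise in $\Omega^2(\Sigma,\vecV)$. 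A rank analysis of $df$ finishes the argument: where $df_p$ has rank $2$, the values $df(e_1),df(e_2)$ are linearly independent in $\vecV$ and the vector equation forces $d\mu_p = 0$; where $\mathrm{rank}\,df_p\leq 1$, the values are colinear, so $(f^*\omega_V)_p = 0$ and $\mu(p)=0$ automatically. On each connected component of the open full-rank locus $U$, $\mu$ is therefore locally constant; any component whose closure meets $\Sigma\setminus U$ has $\mu\equiv 0$ by continuity, and if $U = \Sigma$ then $\mu$ is a global constant with $\int_\Sigma \mu\,\omega_\Sigma = -\int_\Sigma f^*\omega_V = 0$ (since $\omega_V$ is exact on the affine space $V$), again giving $\mu\equiv 0$.

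\emph{Part (2): short-time existence.} I would lift the flow to the space of maps by writing $F = df$: then $\partial F/\partial t = -\nabla^\circ\phi(F) = -P_0\nabla\phi(F)$ is solved by $\partial f/\partial t = -\alpha$, where $\alpha\in\Omega^0(\Sigma,\vecV)$ has vanishing mean and satisfies the Poisson equation $\Delta\alpha = d^*\nabla\phi(df)$, so that $d\alpha = P_0\nabla\phi(df)$. Since $\nabla\phi(F) = -\cJ X_{\mu(F)}(F)$ is polynomial of degree $3$ in $F$, the right-hand side $d^*\nabla\phi(df)$ is a nonlinear differential operator of order at most $2$ in $f$; for $f\in H^s(\Sigma,\vecV)$ with $s>2$, Sobolev multiplication places it in $H^{s-2}$, and elliptic regularity for $\Delta$ on the compact surface $\Sigma$ yields $\alpha\in H^s$. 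The resulting map $f\mapsto \alpha$ is smooth from $H^s(\Sigma,\vecV)$ to itself, so the Cauchy--Lipschitz theorem on this Banach space (modulo constants) provides short-time existence.

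\emph{Part (3) and main obstacle.} Self-adjointness of $P_0$ and $F\in\scrF_0$ reduce the derivative of $\|F\|^2_{L^2}$ along the flow to $-2 D\phi_F(F)$; the $2$-homogeneity of $\mu$ in $F$ gives $D\mu_F\cdot F = 2\mu(F)$, hence $D\phi_F(F) = 2\|\mu(F)\|^2_{L^2}$ and $\tfrac{d}{dt}\|F\|^2_{L^2} = -4\|\mu(F)\|^2_{L^2}\leq 0$. The principal obstacle is Part (1): because $\scrF_0$ is not invariant under the complexified gauge group $\TT^\CC$, the standard Kempf--Ness convexity along complexified orbits --- which would immediately identify critical points with zeros on all of $\scrF$ --- does not apply, and I must instead rely on the specific pointwise algebra of $d\mu\wedge df = 0$ together with the automatic vanishing of $\mu$ at non-immersive points of $f$, which supply the boundary data needed to close the connectedness argument.
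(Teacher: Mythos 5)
Your proposal is correct, but it reaches the key identity ``critical $\Rightarrow$ zero'' by a genuinely different and much longer route than the paper. The paper's argument (Proposition~\ref{prop:gradient} and Corollary~\ref{cor:crit}) is a one-liner: since $\scrF_0$ is a linear subspace, $F$ itself is an admissible variation at $F$, and the $2$-homogeneity of $\mu$ gives $D\phi|_F\cdot F=4\phi(F)$, so criticality of the restricted functional forces $\phi(F)=0$ directly. Amusingly, you prove and use exactly this identity in your Part (3) ($D\mu|_F\cdot F=2\mu(F)$, hence $D\phi|_F\cdot F=2\|\mu(F)\|_{L^2}^2$), but in Part (1) you instead integrate by parts to derive the pointwise Euler--Lagrange equation $d\mu(F)\wedge df=0$ and close the argument with a rank/connectedness/Stokes analysis (which does require the standing assumption that $\Sigma$ is connected, as the paper assumes). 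Your route is valid and even yields extra information (a local characterization of critical points of $\phi|_{\scrF_0}$), but it is unnecessary for the statement; the radial-direction trick is what the paper buys from working on a linear subspace. For short-time existence, the paper stays on $\scrF_0$ and works in Hölder completions: $\nabla\phi(F)=-\mu(F)\scrR F$ is cubic, hence locally Lipschitz in $C^{k,\nu}$, and the Hodge projector $\Pi$ is $C^{k,\nu}$-bounded by Schauder estimates (Proposition~\ref{prop:hodge}), so Cauchy--Lipschitz applies for every $k\geq 0$. You instead lift the flow to maps $f$, express $\nabla^\circ\phi$ through the Poisson equation $\Delta\alpha=d^\star\nabla\phi(df)$, and run Picard--Lindel\"of in $H^s$, $s>2$; this is the same elliptic mechanism (boundedness of the projection onto exact forms) in a Sobolev rather than Hölder framework, and it proves the informal statement, though the paper's technical version (Theorem~\ref{theo:lip}) is phrased in $C^{k,\nu}$ for all $k\geq 0$. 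Your Part (3) coincides with the paper's Theorem~\ref{theo:flownorm}, with the consistent constant $\tfrac{d}{dt}\|F_t\|_{L^2}^2=-8\phi(F_t)=-4\|\mu(F_t)\|_{L^2}^2$.
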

\begin{rmks}
	\begin{enumerate}
		\item The  definition of the \emph{short time existence property} involves
	the use of
	Hölder completions of the moduli space~$\scrF$. A  technical
	version of the short time existence is given at
			Theorem~\ref{theo:lip}.
\item Unfortunately, the modified moment map flow does not seem to have any nice
	regularizing properties, like  parabolic flows for instance.
			Consequently, the long time existence of the flow
			is unclear.
		\item In the case where $\dim_\RR V=4$, we show   at Corollary~\ref{cor:mb}
			that $\phi:\scrF_0\to\RR$  is a Morse-Bott
			function, in a neighborhood  of every 
			\emph{monomorphism} (cf.
			Definition~\ref{dfn:monoiso}) of its vanishing locus.
	\end{enumerate}
\end{rmks}

The purpose of \S\ref{sec:poly} is to extend all the above 
constructions  to the \emph{polyhedral} setting.
The concepts needed for stating the results are quickly introduced below
and the reader may  refer directly to \S\ref{sec:poly} for the exact definitions.

A
\emph{polyhedral surface} $\Sigma$
is a topological surface endowed with a triangulation $\scrT$ and a \emph{polyhedral
metric} $g_\Sigma$. An \emph{orientation} of $\Sigma$ induces a  \emph{polyhedral Kähler
form} $\omega_\Sigma$ and a \emph{polyhedral almost complex structure}
$J_\Sigma$ adapted to $g_\Sigma$.

A \emph{polyhedral map} with respect to a triangulation $\scrT$ is a continuous map $f:\Sigma\to V$ such that the
restriction of $f$ to
every simplex of the triangulation is an \emph{affine map}. 
The moduli space of polyhedral maps $f:\Sigma\to V$ is  denoted 
$\scrM(\scrT)$.

A polyhedral map is generally not differentiable. However, the
restriction of a polyhedral map to any simplex of the triangulation has a
well defined tangent map.
Accordingly, a polyhedral map $f:\Sigma\to V$ is called a \emph{polyhedral isotropic map}, if the pullback of $\omega_V$ by
$f$, restricted to every simplex of the triangulation, vanishes
identically. 

The existence of  polyhedral isotropic \emph{immersions} in $\scrM(\scrT)$ is an open question.
The proof of Theorem~\ref{theo:approx}  proceeds  by introducing a
particular
sequence of triangulations $\scrT_N$, in the case of the $2$-torus $\Sigma$,
with a large number of simplices
of order $\scrO(N^2)$ and stepsize of order $\scrO(N^{-1})$. The piecewise
linear approximations $f_N$ are in fact polyhedral isotropic maps that
belong to
$\scrM(\scrT_N)$.
However, the topology of  the space of
polyhedral isotropic immersions in $\scrM(\scrT)$ remains completely
mysterious  for a fixed triangulation $\scrT$ of $\Sigma$.

Returning to the general case of a oriented polyhedral surface $\Sigma$, we
pursue the analogy with the  smooth setting. 
The space vector space $\scrF(\scrT)$ is the space of families
$F=(F_\sigma)_{\scrK_2}$, where $\sigma$ belongs to the set of facets
$\scrK_2$ of the
triangulation $\scrT$ and $F_\sigma$ is a constant $\vecV$-valued $1$-form on
$\sigma$. 
Because the restriction of a polyhedral map to every
simplex of the triangulation is differentiable, there is a natural
differential map 
$$
d:\scrM(\scrT)\to\scrF(\scrT)
$$
and its image is denoted $\scrF_0(\scrT)$.

We also define the group $\TT^\CC(\scrT)$ as the space of families
$\lambda=(\lambda_\sigma)_{\sigma\in\scrK_2}$, where
$\lambda_\sigma:\sigma\to\CC^*$ is a constant function. The real
subgroup of families $\lambda$ with $\lambda_\sigma\in S^1$ is denoted $\TT(\scrT)$.
The group $\TT^\CC(\scrT)$ acts on
$\scrF(\scrT)$ by Formula~\eqref{eq:acpol}.
As in the smooth case, we construct a Euclidean inner product $\cG$, a
compatible  almost complex structure $\cJ$ and a corresponding Kähler form
$\Omega$ on $\scrF(\scrT)$.
Thus we have a Kähler structure $(\scrF(\scrT),\cG,\cJ,\Omega)$ with an
action of $\TT^\CC(\scrT)$ and we can state our next result:
\begin{theointro}
	Theorem~\ref{theo:A} and Corollary~\ref{cor:A} hold in the polyhedral setting.
\end{theointro}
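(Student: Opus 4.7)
The plan is to reduce the polyhedral statement to a facet-by-facet verification that mirrors the smooth proof on each $\sigma \in \scrK_2$. The starting observation is that every object in sight decomposes as a direct sum indexed by facets: $\scrF(\scrT) = \bigoplus_{\sigma \in \scrK_2} \scrF_\sigma$, where $\scrF_\sigma$ is the finite-dimensional space of constant $\vecV$-valued $1$-forms on $\sigma$; the polyhedral Kähler data $(g_\Sigma, J_\Sigma, \omega_\Sigma)$ is constant on the interior of each facet; and both the group $\TT^\CC(\scrT)$ and its action split as products over $\scrK_2$. Consequently, defining $\cG$, $\cJ$, and $\Omega$ on $\scrF(\scrT)$ reduces to defining them on each $\scrF_\sigma$ and taking the orthogonal direct sum, and similarly checking that they are $\TT^\CC$-invariant, compatible, and Kähler reduces to a per-facet check.

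First I would introduce the facet inner product $\cG_\sigma$ by integrating the pointwise Hermitian $L^2$-pairing over $(\sigma, \omega_\Sigma|_\sigma)$, and set $\cJ_\sigma$ to be complex multiplication on $\vecV$-valued $1$-forms using the constant $J_\Sigma$ on $\sigma$; the corresponding Kähler form is $\Omega_\sigma = \cG_\sigma(\cJ_\sigma \cdot, \cdot)$. Global quantities are then $\cG = \sum \cG_\sigma$, $\cJ = \oplus \cJ_\sigma$, $\Omega = \sum \Omega_\sigma$. Because every $F_\sigma$ is constant and $\cJ_\sigma$ is a constant endomorphism on a finite-dimensional vector space, integrability is trivial and $(\scrF_\sigma, \cG_\sigma, \cJ_\sigma, \Omega_\sigma)$ is a flat Kähler vector space. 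The $\TT^\CC(\scrT)$-action, as defined by Formula~\eqref{eq:acpol}, restricts on each facet to ordinary scalar multiplication by $\lambda_\sigma \in \CC^*$, manifestly preserves $\cJ_\sigma$, and complexifies the $S^1$-action of $\TT(\scrT)$.

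Next I would verify that the $\TT(\scrT)$-action is Hamiltonian with the claimed moment map. On the facet $\sigma$, the infinitesimal action of $\zeta_\sigma \in \RR$ on $F_\sigma$ is $\zeta_\sigma \cJ_\sigma F_\sigma$, so the contraction $\iota_{X_{\zeta_\sigma}}\Omega_\sigma$ pairs with a variation $\delta F_\sigma$ as $\zeta_\sigma \cG_\sigma(\cJ_\sigma^2 F_\sigma, \delta F_\sigma) = -\zeta_\sigma \cG_\sigma(F_\sigma, \delta F_\sigma)$. One then checks, exactly as in the smooth case, that $\cG_\sigma(F_\sigma, \delta F_\sigma)$ equals the first variation of $\tfrac12\int_\sigma (F_\sigma^*\omega_V/\omega_\Sigma)\,\omega_\Sigma$ in the direction $\delta F_\sigma$; this is the one genuine calculation, but it is purely a pointwise statement about a constant $\vecV$-valued $1$-form on a $2$-dimensional Kähler vector space and is identical to the corresponding smooth computation specialized to constant forms. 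Summing over $\sigma$ yields $D\ipp{\mu, \zeta} = -\iota_{X_\zeta}\Omega$ with $\mu(F)|_\sigma = -F_\sigma^*\omega_V/\omega_\Sigma|_\sigma$, and $\TT(\scrT)$-invariance of $\mu$ follows because complex multiplication by $\lambda_\sigma \in S^1$ preserves $F_\sigma^*\omega_V$.

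The main obstacle, though conceptually minor, is simply the bookkeeping required to keep the polyhedral structures unambiguous: one must confirm that $F_\sigma^*\omega_V$ is a scalar multiple of $\omega_\Sigma|_\sigma$ on each open facet so that the ratio $\mu(F)|_\sigma$ is well defined, and that no contributions arise from the codimension $\geq 1$ skeleton, which is measure zero for $\omega_\Sigma$. Finally, for the polyhedral Corollary~A, a polyhedral map $f \in \scrM(\scrT)$ is isotropic precisely when $(df_\sigma)^*\omega_V = 0$ for each facet, which by the above moment-map formula is equivalent to $\mu(df) = 0$. Since the smooth arguments from \S~corresponding to Theorem~\ref{theo:A} apply on each $\sigma$ verbatim with constant coefficients, assembling the per-facet verifications completes the proof.
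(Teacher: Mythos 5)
Your overall strategy --- decompose $\scrF(\scrT)=\bigoplus_{\sigma\in\scrK_2}\scrF_\sigma$ and rerun the smooth computations with constant coefficients on each facet --- is exactly the paper's route (its proof is declared ``formally identical to the smooth setting''). However, the one computation you actually spell out, the Hamiltonian property, is wrong as written, because it conflates the two commuting complex structures that the whole construction hinges on. The gauge group $\TT(\scrT)$ acts by complex multiplication in the target $\vecV$, so the infinitesimal action of $\zeta$ on a facet is $i\zeta_\sigma F_\sigma$ (the polyhedral analogue of \eqref{eq:infcxactionreal}), not $\zeta_\sigma\cJ_\sigma F_\sigma$: the Kähler structure $\cJ$ is precomposition with $-J_\sigma$ in the source and is a different operator. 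Consequently $\iota_{X_\zeta}\Omega$ pairs with a variation $\dot F$ as $\cG(\zeta\,\scrR F,\dot F)$ with $\scrR=i\cJ$, and the proof needs $D\mu|_F\cdot\dot F=-g(\scrR F,\dot F)$ (the facet version of Lemma~\ref{lemma:dmu}) together with the pointwise identity $-\tfrac12 g(\scrR F,F)=-F_\sigma^*\omega_V/\omega_\sigma$ (the facet version of Lemma~\ref{lemma:mm}, checked in an oriented orthonormal frame of $\vec\sigma$). Your intermediate claim that $\cG_\sigma(F_\sigma,\delta F_\sigma)$ equals the first variation of $\tfrac12\int_\sigma(F_\sigma^*\omega_V/\omega_\sigma)\,\omega_\sigma$ is false: at an isotropic $F_\sigma\neq 0$ with $\delta F_\sigma=F_\sigma$ the left-hand side is $\|F_\sigma\|^2>0$ while the right-hand side equals $\tfrac12\cG_\sigma(\scrR F_\sigma,F_\sigma)=0$. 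As written, your computation produces the moment map of the $\cJ$-rotation action, namely $-\tfrac12\|F\|^2$, whose zero locus is the origin; with that $\mu$ the polyhedral Corollary~\ref{cor:A} would fail, since it is precisely the involution $\scrR$ that turns $\mu$ into the symplectic density.

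A second, smaller inaccuracy: the $\TT^\CC(\scrT)$-action \eqref{eq:acpol} is not scalar multiplication by $\lambda_\sigma$ on each facet; it acts by $\bar\lambda_\sigma^{-1}$ on $\scrF^+_\sigma$ and by $\lambda_\sigma$ on $\scrF^-_\sigma$, and only the real subgroup $\TT(\scrT)$ acts by plain multiplication. The $\pm$ splitting is needed to verify $\cJ$-invariance of the complexified action and the relation $\cJ\, X_\zeta=X_{i\zeta}$ showing it is the complexification of the $\TT(\scrT)$-action. Once these two points are repaired --- i.e.\ once you transpose Lemmas~\ref{lemma:mm}, \ref{lemma:dmu} and Theorem~\ref{theo:mm} verbatim to each facet with the correct generator $i\zeta F$ --- your facet-by-facet reduction does coincide with the paper's argument, and your passage to the polyhedral Corollary via $d:\scrM(\scrT)\to\scrF_0(\scrT)$ is fine as described.
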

As in the smooth setting, we define an energy of the moment map
$\phi:\scrF(\scrT)\to \RR$ by $\phi(F)=\frac 12\|\mu(F)\|^2_{L^2}$.
 The downward gradient of the functional $\phi$ restricted to
$\scrF_0(\scrT)$ is called the \emph{polyhedral modified moment map flow}.

The moduli spaces $\scrM(\scrT)$ and $\scrF(\scrT)$ are finite dimensional
and much stronger result than
Theorem~\ref{theo:smoothflow} are
expected, as the polyhedral moment  map flow is an ordinary
differential equation. This is indeed the case, and we obtain the following Duistermaat type theorem:
\begin{theointro}
	\label{theo:main}
	For $F\in\scrF_0(\scrT)$,
	the polyhedral modified moment map flow 
	admits a unique solution $F_t\in\scrF_0(\scrT)$, defined for
	$t\in[0,+\infty)$, such that $F_0=F$.
	Furthermore, $F_t$ admits a limit $F_\infty\in\scrF_0(\scrT)$ as $t$ goes to
	$+\infty$, with the property that $\phi(F_\infty)=0$.

	The extended flow
	$$
	\Theta:[0,+\infty]\times\scrF_0(\scrT)\to\scrF_0(\scrT)
	$$
	defined by $\Theta(t,F)=F_t$ for $t\in[0,+\infty)$ and
	$\Theta(+\infty,F)=F_\infty$ has the following properties:
	\begin{enumerate}
		\item The fixed point locus of the flow $\Theta$ is the
			vanishing set of $\phi:\scrF_0(\scrT)\to \RR$.
		\item The flow  defines a strong deformation
	retraction of $\scrF_0(\scrT)$ onto the vanishing locus of
	$\phi:\scrF_0(\scrT)\to\RR$.
\item	There are non trivial flow lines converging toward $0$. More
	precisely, there exists $F\in\scrF_0(\scrT)\setminus 0$, such that 
		$\Theta(+\infty,F)=0$.
	\item
		The flow $\Theta$ has exponential convergence rate in a
	neighborhood of every regular point of the vanishing locus of
	$\phi:\scrF_0(\scrT)\to\RR$.
\end{enumerate}
\end{theointro}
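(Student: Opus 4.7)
My plan is to use that $\scrF_0(\scrT)$ is finite-dimensional and that $\phi|_{\scrF_0(\scrT)}$ is real analytic (in fact a polynomial of degree four in $F$), together with the moment map structure provided by the polyhedral version of Theorem~\ref{theo:A}. The main tools are Picard-Lindelöf for well-posedness, the \L{}ojasiewicz inequality for convergence, and a linearization for the exponential rate.

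For well-posedness, I would first observe that $\mu(F)=-F^*\omega_V/\omega_\Sigma$ is simplexwise constant and quadratic in $F$, so $\phi=\tfrac12\|\mu\|_{L^2}^2$ is a polynomial, and its modified gradient $\nabla^\circ\phi$, obtained by orthogonal projection of $\nabla\phi$ onto the linear subspace $\scrF_0(\scrT)$, is a polynomial vector field. Picard-Lindelöf provides short-time existence and uniqueness. To extend the flow to $[0,+\infty)$ I would transplant to the polyhedral setting the non-increasing $L^2$-norm property of Theorem~\ref{theo:smoothflow}, so that trajectories remain bounded and the ODE extends globally. The identification of the fixed-point locus with the vanishing set of $\phi|_{\scrF_0(\scrT)}$, item~(1), would then follow as the polyhedral analogue of the corresponding statement in Theorem~\ref{theo:smoothflow}.

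For convergence and the retraction property, the key is the \L{}ojasiewicz inequality applied to $\phi|_{\scrF_0(\scrT)}$: at every critical point $F^*$ there exist constants $c>0$ and $\theta\in(0,\tfrac12]$ such that $|\phi(F)-\phi(F^*)|^{1-\theta}\leq c\|\nabla^\circ\phi(F)\|$ on a neighborhood. The classical Simon--\L{}ojasiewicz argument then gives finite length of $t\mapsto F_t$ and convergence to a unique limit $F_\infty$, which is critical and hence lies in $\phi^{-1}(0)$. Continuity of $\Theta$ at $t=+\infty$, required for the strong deformation retraction of item~(2), would follow from a uniform \L{}ojasiewicz estimate on compact subsets of $\scrF_0(\scrT)$, available since $\phi$ is a polynomial and hence semi-algebraic. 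For item~(3), a nontrivial flow line converging to $0$ could be produced by choosing $F$ along a complexified $\TT^\CC(\scrT)$-orbit direction through $0$ and invoking the uniqueness of the limit to force $F_\infty=0$. Item~(4) would follow by linearizing at a regular zero $F^*$: when $\phi|_{\scrF_0(\scrT)}$ is Morse-Bott there -- the polyhedral analogue of Corollary~\ref{cor:mb} -- the Hessian is positive definite in the direction normal to the critical manifold, and a standard stable manifold argument yields the exponential convergence rate.

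The main obstacle I anticipate is the continuity of $\Theta$ at $t=+\infty$ jointly in the initial data: although \L{}ojasiewicz handles each trajectory individually, producing a genuine strong deformation retraction requires uniform \L{}ojasiewicz exponents and constants along the connected components of the possibly singular zero set $\phi^{-1}(0)\cap\scrF_0(\scrT)$. A related technical point is verifying the Morse-Bott condition for $\phi|_{\scrF_0(\scrT)}$ at regular zeros in the polyhedral setting, since the Hessian must be computed along the constrained subspace $\scrF_0(\scrT)$ rather than on the ambient $\scrF(\scrT)$.
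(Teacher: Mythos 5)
Your treatment of well-posedness, items (1), (2) and (4) follows essentially the same route as the paper: polynomial vector field plus Picard--Lindel\"of, global existence from the $L^2$-decay identity $\frac{\del}{\del t}\|F_t\|^2_{L^2}=-8\phi(F_t)$, convergence and continuity of the extended flow from the \L{}ojasiewicz inequality for the polynomial functional $\phi$ (the paper outsources exactly this step to the analogous finite-dimensional result \cite[Theorem 7.8.4]{Rol24}, so your worry about uniform constants is legitimate but is resolved the same way), and, at a regular point in the sense of Definition~\ref{dfn:regular}, the constant-rank hypothesis together with the Hessian formula $D^2\phi|_F(\dot F_1,\dot F_2)=\ipp{D\mu|_F\dot F_1,D\mu|_F\dot F_2}$ gives the Morse--Bott behaviour transverse to the zero set and hence the exponential rate; no appeal to Corollary~\ref{cor:mb} (a smooth, $\dim_\RR V=4$ statement about monomorphisms) is needed there.

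The genuine gap is item (3). ``Choosing $F$ along a complexified $\TT^\CC(\scrT)$-orbit direction through $0$ and invoking the uniqueness of the limit to force $F_\infty=0$'' is not an argument: the action \eqref{eq:acpol} is linear, so the $\TT^\CC(\scrT)$-orbit of $0$ is $\{0\}$; the modified flow is not gauge-equivariant (the projection $\Pi$ onto $\scrF_0(\scrT)$ destroys invariance, which is the whole reason the flow is ``modified''); and uniqueness of the limit says nothing about which point of the large zero set $\phi^{-1}(0)\cap\scrF_0(\scrT)$ a given trajectory selects, so nothing forces $F_\infty=0$ rather than a nonzero isotropic differential. What the paper actually uses is the homogeneity of $\phi$ (degree $4$, so $\nabla^\circ\phi$ is homogeneous of degree $3$) together with the existence of a \emph{soliton}: since non-isotropic polyhedral maps exist, $\phi$ is not identically zero on the unit sphere $\SS(\scrT)\subset\scrF_0(\scrT)$, and by compactness it attains a maximum at some $G$ with $\phi(G)>0$; $G$ then satisfies the soliton equation $\|G\|^2_{L^2}\nabla^\circ\phi(G)=4\phi(G)\,G$, and the explicit self-similar family $F_t=r(t)G$ with $r(t)=\bigl(8(t-t_0)\phi(G)\bigr)^{-1/2}$ is checked to solve the modified flow and to tend to $0$ as $t\to+\infty$. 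Without this (or an equivalent mechanism singling out $0$ as the limit), item (3) remains unproved in your proposal.
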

The flow $\Theta$ on $\scrF_0$ can be lifted as a flow $\hat\Theta$ on
$\scrM(\scrT)$ and we have the following corollary:
\begin{corintro}
	\label{cor:main}
	There exists a unique map 
	$$\hat
	\Theta:[0,+\infty]\times\scrM(\scrT)\to\scrM(\scrT)$$
	such that
	\begin{enumerate}
		\item
	$d\circ \hat\Theta_t =\Theta_t \circ d$ and
\item $\hat\Theta_0
	=\id$ on $\scrM(\scrT)$,
	\end{enumerate}
	where we used the notation $\hat\Theta_t=\hat\Theta(t,\cdot)$. 
	The map $\hat\Theta$ is the flow of the evolution equation
	given by Formula~\eqref{eq:liftflow} and defines a
	strong deformation
	 retraction of $\scrM(\scrT)$ onto the subspace of 
	polyhedral isotropic maps.
\end{corintro}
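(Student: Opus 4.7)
The plan is to reduce Corollary~\ref{cor:main} to Theorem~\ref{theo:main} via a canonical lift of the flow through a splitting of the differential. Since $\Sigma$ is connected, the linear map $d:\scrM(\scrT)\to\scrF_0(\scrT)$ is surjective with kernel equal to the finite dimensional subspace $\vecV\subset\scrM(\scrT)$ of constant maps, which fits into the short exact sequence
$$
0\to\vecV\to\scrM(\scrT)\xrightarrow{d}\scrF_0(\scrT)\to 0.
$$
I would fix a linear splitting $s:\scrF_0(\scrT)\to\scrM(\scrT)$, for instance by selecting a vertex $p_0$ of $\scrT$ and requiring $s(F)(p_0)=0$, yielding a decomposition $\scrM(\scrT)=\vecV\oplus s(\scrF_0(\scrT))$. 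Then I would set
$$
\hat\Theta_t(f)=f-s(df)+s(\Theta_t(df)),\qquad t\in[0,+\infty],
$$
so that $\hat\Theta_0(f)=f$ and $d\hat\Theta_t(f)=\Theta_t(df)$ hold by construction. Differentiating in $t$ produces the evolution equation
$$
\frac{\partial\hat\Theta_t(f)}{\partial t}=-s\bigl(\nabla^\circ\phi(d\hat\Theta_t(f))\bigr)
$$
on $\scrM(\scrT)$, which I identify with Formula~\eqref{eq:liftflow}; the uniqueness of $\hat\Theta$ as the flow of this ODE then reduces to the Cauchy--Lipschitz theorem combined with the long time existence guaranteed by Theorem~\ref{theo:main}.

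The strong deformation retraction property would follow by checking the three required conditions in turn. First, $\hat\Theta_0=\id$ is immediate. Second, for a polyhedral isotropic map $f$, the polyhedral analogue of Corollary~\ref{cor:A} gives $\mu(df)=0$, so $df$ belongs to the vanishing locus of $\phi:\scrF_0(\scrT)\to\RR$; by the fixed point statement of Theorem~\ref{theo:main}(1), $\Theta_t(df)=df$ for every $t$, hence $\hat\Theta_t(f)=f$, showing that $\hat\Theta$ restricts to the identity on the polyhedral isotropic locus. Third, the convergence $\Theta_t(df)\to F_\infty$ with $\phi(F_\infty)=0$ provided by Theorem~\ref{theo:main}, together with the linearity (hence continuity) of $s$, gives $\hat\Theta_t(f)\to f-s(df)+s(F_\infty)=\hat\Theta_\infty(f)$; the identity $d\hat\Theta_\infty(f)=F_\infty$ then shows that $\hat\Theta_\infty(f)$ is polyhedral isotropic, and joint continuity of $\hat\Theta$ on $[0,+\infty]\times\scrM(\scrT)$ follows from the joint continuity of $\Theta$ and the linearity of $s$.

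The main technical point will be matching the abstract splitting $s$ used above with the specific normalization implicit in Formula~\eqref{eq:liftflow} as stated in the body of the paper, most likely corresponding to a basepoint condition or to $L^2$-orthogonality to constants. Once this bookkeeping is settled, Corollary~\ref{cor:main} is a direct consequence of Theorem~\ref{theo:main} transported across the splitting, and no additional analytical input beyond what has already been established in the polyhedral case is required.
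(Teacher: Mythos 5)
Your proposal is correct and follows essentially the same route as the paper: the splitting $s$ is exactly the paper's right inverse $\chi$ (up to the choice of basepoint value), and your formula $\hat\Theta_t(f)=f-s(df)+s(\Theta_t(df))$ coincides with the paper's $\hat\Theta(t,f)=\chi\circ\Theta(t,df)+(f(x_0)-v_0)$, with the deformation retraction properties imported from Theorem~\ref{theo:main} just as in the paper.
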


The existence of regular points (cf. Definition~\ref{dfn:regular}) of the vanishing set of
$\phi:\scrF_0(\scrT)\to\RR$ is not well understood in comparison to the smooth
setting (cf. \S\ref{sec:mb}). However, we show that the
approximating scheme of \cite{JRT,Rol22} provides examples of regular
points: 
\begin{theointro}
	\label{theo:mainbis}
	In the case where $\Sigma$ is a $2$-torus and
	$f:\Sigma\looparrowright V$ is a smooth isotropic immersion, 
	let $f_N\in\scrM(\scrT_N)$ be the Jauberteau-Rollin-Tapie isotropic polyhedral
	immersions
	of Theorem~\ref{theo:approx} approximating~$f$.
	Then $F_N=df_N \in\scrF_0(\scrT_N)$ 
	is a regular point of the vanishing
	locus of $\phi:\scrF_0(\scrT_N)\to\RR$ for every sufficiently large $N$.
\end{theointro}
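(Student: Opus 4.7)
The plan is to deduce the polyhedral regularity of $F_N=df_N$ from the smooth Morse--Bott property of $\phi:\scrF_0\to\RR$ near the monomorphism $f$, provided by Corollary~\ref{cor:mb}, by a discretization--stability argument that exploits the $C^1$-convergence $f_N\to f$ given by Theorem~\ref{theo:approx} together with the scaling of the triangulations $\scrT_N$: facet count $\scrO(N^2)$ and stepsize $\scrO(N^{-1})$.

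First, I would unpack the condition. Since $\phi=\tfrac12\|\mu\|_{L^2}^2$, regularity at $F\in\scrF_0(\scrT_N)$ in the sense of Definition~\ref{dfn:regular} amounts to non-degeneracy of $\Hess_F\phi$ transversally to the vanishing locus, equivalently to the linearization $D\mu|_{\scrF_0(\scrT_N)}$ having constant maximal rank at $F$. Both ingredients (the linearization of $\mu$ and the infinitesimal $\TT(\scrT_N)$-action entering Hess) admit purely facet-wise descriptions once $F\in\scrF_0(\scrT_N)$ has been written in local coordinates adapted to each $\sigma\in\scrK_2$, and both behave in a controlled way under rescaling of the individual simplices.

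Second, I would verify the pointwise input. Since $f_N$ is a topological immersion (Theorem~\ref{theo:approx}) that converges to the smooth immersion $f$ in $C^1$, the facet tangent maps $F_{N,\sigma}=df_N|_\sigma$ are uniformly injective: their minimal singular values stay bounded below by a positive constant independent of $N$ and $\sigma$, by compactness of $\Sigma$ and by $C^1$-closeness to $df$. In particular $F_N$ is a polyhedral monomorphism in the sense of Definition~\ref{dfn:monoiso}, and on each facet the scalar moment constraint $\mu_\sigma(F)=-F_\sigma^*\omega_V/\omega_\Sigma$ has a surjective differential in the facet-wise variables, with bounds uniform in $N$.

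The core step, and the main obstacle, is upgrading these pointwise facts into a global non-degeneracy on $\scrF_0(\scrT_N)$: elements of $\scrF_0(\scrT_N)=d(\scrM(\scrT_N))$ are exact discrete $1$-cochains, so facet data are coupled across shared edges, and $D\mu|_{\scrF_0(\scrT_N)}$ is not the direct sum of the facet linearizations. A priori such global cocycle constraints could produce unexpected kernel or cokernel that pointwise estimates cannot detect. To rule this out, I would rescale $\scrT_N$ by $N$, reinterpret $F_N$ on the rescaled complex as a Riemann-sum sampling of $df$, and show that in this normalization the polyhedral Hessian of $\phi$ at $F_N$ converges to the smooth Hessian of the restricted functional $\phi:\scrF_0\to\RR$ at $df$, with convergence rate $\scrO(N^{-1})$ measured in the discrete $L^2$ operator norm. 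The target smooth Hessian is non-degenerate transversally to the vanishing locus by Corollary~\ref{cor:mb}, since $f$ is a smooth isotropic monomorphism; hence by a standard perturbation argument the polyhedral Hessian at $F_N$ remains non-degenerate for all $N$ sufficiently large, which is exactly the assertion that $F_N$ is a regular point of $\{\phi=0\}\subset\scrF_0(\scrT_N)$.
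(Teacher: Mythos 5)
Your approach has a genuine gap, and it also diverges from the paper's actual argument in a way that matters. First, your anchor is Corollary~\ref{cor:mb}, which is only established under the hypothesis $\dim_\RR V=4$ (it rests on the Lagrangian neighborhood theorem for Lagrangian surfaces), whereas Theorem~\ref{theo:mainbis} is stated for a general Hermitian affine space $V$ of complex dimension $m\geq 2$; for $m>2$ the paper proves no smooth Morse--Bott statement at all, so your scheme has no smooth limit object to perturb from in the general case. Second, and more fundamentally, the ``standard perturbation argument'' in your core step is not standard in this situation: you are comparing Hessians (equivalently, linearizations $D\mu$ restricted to exact forms) acting between spaces whose dimensions grow like $N^2$, and operator-norm closeness to a smooth operator that is transversally non-degenerate does not by itself yield transverse non-degeneracy of the discrete operator. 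What is needed is a uniform-in-$N$ quantitative statement — a lower bound on the nonzero singular values, i.e.\ a uniformly bounded right inverse for $D(\mu_N\circ d)$ at $f_N$ onto the subspace of $\LieTT(\scrT_N)$ orthogonal to constants — together with control of the dimension of the discrete vanishing locus near $F_N$ (the polyhedral isotropic locus is not the sampling of the smooth one). Without that, your argument assumes exactly the point at issue. The claimed $\scrO(N^{-1})$ rate is also unsupported, since Theorem~\ref{theo:approx} only asserts $C^1$-convergence without a rate.

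The paper's route is different and shorter: it observes that, because $[\omega_V]=0$, the map $\mu_N\circ d$ takes values in the hyperplane of $\LieTT(\scrT_N)$ orthogonal to constants, so the best possible rank is corank one, and then invokes Lemma~\ref{lemma:reg}, namely that the effective fixed point principle used in \cite{JRT} to construct $f_N$ already furnishes the quantitative surjectivity (maximal rank) of $D(\mu_N\circ d)$ at $f_N$ for all large $N$; maximal rank being an open condition, this gives the constant-rank property of Definition~\ref{dfn:regular}. In other words, the uniform linear estimate you would need for your perturbation step is precisely the content imported from \cite{JRT}, not something recoverable from $C^1$-closeness and Corollary~\ref{cor:mb} alone. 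Note also that regularity in the sense of Definition~\ref{dfn:regular} asks for constant rank near $F$, not for transverse non-degeneracy of the Hessian; maximal rank is sufficient for it, but your asserted equivalence is not what the definition says.
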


\subsection{Open problems and future research}
In view of the approximation scheme given by Theorem~\ref{theo:approx}, it
seems sensible to expect that some type Gromov-Lees 
theorem should apply to the polyhedral setting, provided some flexibility
for the choice of triangulation $\scrT$ of $\Sigma$.
If this is indeed the case, we expect the following consequence:
\begin{conj}
	\label{conj:intro}	
	Let $\Sigma$ be a  surface endowed  with a triangulation
	$\scrT$. Then, up to passing to a subdivision of $\scrT$, 
	the space of polyhedral isotropic immersions in $\scrM(\scrT)$ is
	homotopically equivalent to the space of smooth isotropic
	immersions of $\Sigma$ in $V$.
\end{conj}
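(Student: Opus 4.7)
The plan is to deduce the conjecture from a (still missing) polyhedral analogue of the Gromov-Lees h-principle, as the paragraph preceding the statement explicitly suggests. One first defines, alongside the space of polyhedral isotropic immersions in $\scrM(\scrT)$, a larger space of \emph{formal} polyhedral isotropic immersions: pairs $(f,\Phi)$ with $f\in\scrM(\scrT)$ arbitrary and $\Phi=(\Phi_\sigma)_{\sigma\in\scrK_2}$ a family of fiberwise injective constant $\vecV$-valued $1$-forms satisfying $\Phi_\sigma^*\omega_V=0$ on each facet, but with no gluing condition across edges. The smooth analogue of this space is homotopy equivalent to the space of smooth isotropic immersions by the Gromov-Lees theorem. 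Moreover the polyhedral and smooth formal spaces are manifestly homotopy equivalent by obstruction theory: formal data amount to a continuous bundle map over $\Sigma$ valued in an appropriate Stiefel-like fiber, which is insensitive to the triangulation.

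The missing step is the polyhedral h-principle itself, asserting that the inclusion of polyhedral isotropic immersions into polyhedral formal isotropic immersions is a weak equivalence, possibly after subdivision of $\scrT$. My approach would combine two ingredients from the paper. First, Corollary~\ref{cor:main} provides a canonical strong deformation retraction of $\scrM(\scrT)$ onto polyhedral isotropic maps, depending continuously on the initial data. Second, the exponential convergence part of Theorem~\ref{theo:main}, together with the regularity criterion of Theorem~\ref{theo:mainbis}, shows that the flow preserves the open immersion locus in a neighborhood of any regular zero of $\phi$. The strategy is then to refine $\scrT$ enough so that any prescribed formal isotropic datum $(f,\Phi)$ falls in the basin of attraction of some polyhedral isotropic immersion, and to use the flow as a canonical homotopy from the formal datum to an honest polyhedral isotropic immersion.

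Concretely, the comparison map from smooth to polyhedral isotropic immersions would send a smooth isotropic immersion $f$ to the Jauberteau-Rollin-Tapie approximation $f_N$ of Theorem~\ref{theo:approx}; any ambiguity in the choice of $N$ or of the local triangulation would be absorbed by the flow, since two sufficiently close approximations get retracted to polyhedral isotropic immersions in the same path component, by Theorem~\ref{theo:mainbis}. A parametric version of the approximation, applied to continuous families $(f_s)_{s\in S^k}$ of smooth isotropic immersions, would then give the equivalence on all higher homotopy groups. In the opposite direction, approximating a polyhedral isotropic immersion by smooth immersions and then applying the smooth moment map flow of \cite{JRT} restores the isotropic condition and produces the inverse map; the composition is homotopic to the identity via the retractions on either side.

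The hard part, by a wide margin, is establishing the polyhedral h-principle itself. The paper emphasizes that Moser's trick fails in the polyhedral setting, so classical convex integration cannot be directly transplanted. Even the existence half is essentially Theorem~\ref{theo:approx}, which is proved only for tori and one map at a time; extending it to arbitrary surfaces and, crucially, to families with uniform control of the immersion condition is substantially harder, and requires verifying that the Jauberteau-Rollin-Tapie approximations produce regular points in a parametric sense. A successful proof will likely demand genuinely new combinatorial-symplectic ideas, perhaps a Morse-Bott stratification of $\phi$ on $\scrF_0(\scrT)$ in the spirit of Atiyah-Bott, so that the topology of the vanishing locus can be read off directly from the modified moment map flow.
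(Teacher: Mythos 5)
This statement is Conjecture~\ref{conj:intro} in the paper: the authors offer no proof, only the heuristic that a polyhedral Gromov--Lees theorem, if it existed, would yield it as a consequence. Your proposal is therefore not being measured against an actual argument, and indeed it is not one: you state explicitly that ``the missing step is the polyhedral h-principle itself,'' and that missing step is precisely the entire content of the conjecture. Everything else in your sketch is scaffolding around that hole, so as a proof it has a genuine and fatal gap.

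Beyond the acknowledged gap, two of your proposed ingredients conflict with what the paper actually establishes. First, Theorem~\ref{theo:approx} and the Jauberteau--Rollin--Tapie approximations $f_N$ are proved only for $\Sigma$ a $2$-torus and for one smooth isotropic immersion at a time; there is no parametric version, and the conjecture concerns arbitrary surfaces and the full homotopy type. Second, and more seriously, your plan to use the flow as a canonical homotopy from a formal isotropic datum to an honest polyhedral isotropic immersion runs directly against the paper's own caveat in \S\ref{sec:topo}: the space of monomorphisms in $\scrF_0(\scrT)$ does not appear to be preserved by the modified moment map flow $\Theta$, and the authors explicitly rate as low the prospect of using the flow to study the space of polyhedral isotropic immersions. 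The retraction of Corollary~\ref{cor:main} lands in the space of isotropic \emph{maps}, which is a contractible cone, not in the space of isotropic \emph{immersions}. The exponential convergence of Theorems~\ref{theo:main} and~\ref{theo:mainbis} controls a neighborhood of a regular zero of $\phi$, not the behavior of flow lines launched from arbitrary formal data far from the vanishing locus, so it cannot guarantee that your prescribed datum falls into the basin of attraction of an immersion. Even granting a polyhedral h-principle as a black box, the flow-based mechanism you propose for realizing it is exactly the one the authors flag as unlikely to work.
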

For triangulation with lower complexity, an adapted Morse-Bott cohomology
theory for
$\phi:\scrF_0(\scrT)\to\RR$
should lead to topological invariants  for
the space of non constant polyhedral isotropic maps.
Theorem~\ref{theo:main} shows that the polyhedral modified moment map flow
is extremely well behaved, which is a strong incentive to develop a
Morse-Bott theory, with a renormalized flow briefly discussed at
\S\ref{sec:topo}.

A numerical version
of the polyhedral modified moment map flow is currently being developed 
for testing purposes of the Morse-Bott theory and for producing effective
examples of polyhedral isotropic immersed surfaces. 
We sketch the relevant mathematical ingredients of a computer program
that produces approximate solutions of the flow at
\S\ref{sec:numflow}. The code is to be released very soon~\cite{JR1}.

\subsection{Acknowledgments}
A part of this research was carried out in 2023-2024, thanks to the support of
the CNRS International Research Lab
CRM in Montreal. Yann Rollin was hosted by the  CIRGET at UQAM 
and wishes to
thank  all his colleagues  for creating such an enjoyable and 
stimulating mathematical
environment, in particular, Antonio Alfieri, Vestislav Apostolov,
Steven Boyer, Charles Cifarelli, Olivier Collin,  Alexandra
Haedrich, Julien Keller, Abdellah Lahdili, Steven Lu, Duncan McCoy, Frédéric Rochon and Carlo Scarpa.
We  also thank Mélanie Bertelson and Vincent Borrelli, for some useful
discussions.

\tableofcontents

\section{Kähler moment map}
\label{sec:dream}
This section is devoted to the description of an infinite dimensional
moment map geometry, which provides an interpretation of smooth isotropic maps
as some particular zeroes of a moment map.
\subsection{Target space}
\label{sec:target}
Let $V$ be an \emph{affine space} and $\vecV$, its underlying vector space.
We assume that $\vecV$ is a \emph{complex vector space}, of complex dimension
$m\geq 2$, endowed with
a \emph{Hermitian inner product} $h_V$, anti-$\CC$-linear in the
first variable. 
For every $v_1, v_2\in\vecV$, the decomposition
$$
h_V(v_1,v_2) = g_V(v_1,v_2) + i\omega_V(v_1,v_2),
$$
into the real and imaginary parts of $h_V$ provides a \emph{Euclidean inner
product} $g_V$ and a \emph{symplectic form} $\omega_V$.
Multiplication by $i$ defines a linear endomorphism
$i:\vecV\to\vecV$, also called an
\emph{almost complex structure}.
By definition the almost complex structure $i$ is \emph{compatible}
with  $g_V$ and $\omega_V$,
 in the sense
that  
\begin{equation*}
	g_V(iv_1,iv_2)=g_V(v_2,v_2) \quad \mbox{ and } \quad
	 \omega_V(v_1,v_2)=g_V(iv_1,v_2)
\end{equation*}
for every $v_1,v_2\in \vecV$. 

\subsection{Source space}
Let $\Sigma$ be a smooth closed and oriented surface, endowed with 
a  Riemannian metric~$g_\Sigma$. We also assume that $\Sigma$ is connected
in all this paper, for simplicity of notations.
We denote by $\omega_\Sigma$ the \emph{volume
form} of $g_\Sigma$ compatible with the orientation.
The corresponding \emph{almost complex
structure}
$J_\Sigma\in\End(T\Sigma)$ is defined  as a fiberwise rotation of
$T\Sigma\to\Sigma$ with angle
$+\frac\pi 2$ according  to the orientation. 
By construction, $J_\Sigma$ is compatible with $g_\Sigma$ and
$\omega_\Sigma$ is the associated \emph{Kähler form}, in the sense that
$$g_\Sigma(J_\Sigma
\eta_1,J_\Sigma \eta_2)=g_\Sigma(\eta_1,\eta_2) \quad\mbox{ and }\quad
\omega_\Sigma(\eta_1,\eta_2) =
g_\Sigma(J_\Sigma \eta_1,\eta_2),$$
for every 
$x\in\Sigma$ and $\eta_1,\eta_2\in T_x \Sigma$.
In conclusion, $\Sigma$ is endowed with a Kähler structure
$$(\Sigma,g_\Sigma,J_\Sigma,\omega_\Sigma).$$

\subsection{Fiberwise structures on tensor bundles}
\label{sec:fiberwise}
The bundle $T^*\Sigma\to\Sigma$ is identified to $T\Sigma\to\Sigma$ using
the duality induced by the Riemannian metric $g_\Sigma$.
Thus $T^*\Sigma\to\Sigma$ and all the tensor bundles 
are endowed with an induced fiberwise Euclidean inner
product
denoted~$g_\Sigma$ as well. 

The structure of complex vector space on $\vecV$ induces a canonical structure of
complex vector bundle on 
$\Lambda^n\Sigma\otimes
\vecV\to\Sigma$, where the tensor product is taken with respect to $\RR$.
Furthermore, $h_V$ and $g_\Sigma$ induce a fiberwise
Hermitian inner product  on $\Lambda^n\Sigma\otimes\vecV\to\Sigma$, denoted
$h$, and
defined by the following property: for every $x\in\Sigma$, $\beta_1, \beta_2 \in
\Lambda^n_x\Sigma$ and $v_1,v_2\in \vecV$, then
$$
h(\beta_1 \otimes v_1, \beta_2\otimes v_2) = g_\Sigma(\beta_1,\beta_2)
	h_V(v_1,v_2).
$$
The complex structure $J_\Sigma$ acts on $\Lambda^n\Sigma\to\Sigma$, by
composition on the right: for every $x\in \Sigma$, $\beta \in
\Lambda_x^n\Sigma$ and $\eta_1,\cdots,\eta_n\in T_x\Sigma$, we put
$$(\beta\circ J_\Sigma)
(\eta_1,\cdots,\eta_n)=\beta(J_\Sigma\eta_1,\cdots,J_\Sigma\eta_n).
$$
This action is isometric, in the sense that for every
$\beta_1,\beta_2\in\Lambda_x^n\Sigma$, we have
$g_\Sigma(\beta_1,\beta_2)= 
g_\Sigma(\beta_1\circ J_\Sigma,\beta_2\circ J_\Sigma)$

The action of $J_\Sigma$ extends canonically  to
$\Lambda^n\Sigma\otimes\vecV\to\Sigma$ and 
the Hermitian product $h$ is $J_\Sigma$-invariant, in the
sense that, for every $\beta_1,\beta_2\in\Lambda^n_x\Sigma \otimes\vecV$, we
have
$$
h(\beta_1,\beta_2) = h(\beta_1\circ J_\Sigma, \beta_2\circ J_\Sigma).
$$
We consider the Riemannian metric  $g$ given by the real part of $h$.
Then~$J_\Sigma$ and i act isometrically on
$\Lambda^n\Sigma\otimes\vecV\to\Sigma$,
 in the sense that
$$
g(\beta_1,\beta_2)= g(\beta_1\circ J_\Sigma,\beta_2\circ J_\Sigma),\quad
g(i\beta_1,i\beta_2)= g(\beta_1,\beta_2).
$$
We define a fiberwise almost complex structure on $T^*\Sigma\otimes \vecV\to
\Sigma$, by the
formula
\begin{equation}\label{eq:JF}
J\cdot F =- F\circ J_\Sigma.
\end{equation}
By definition, for every $F_1,F_2\in T^*_x\Sigma\otimes \vecV$, we have
$$
g(iF_1,iF_2)=g(J\cdot F_1,J\cdot F_2)=g(F_1,F_2).
$$
In particular, the formula
\begin{equation}\label{eq:omegaF}
\omega(F_1,F_2) = g(J\cdot F_1,F_2)
\end{equation}
defines a fiberwise symplectic form on the bundle $T^*\Sigma\otimes \vecV
\to \Sigma$.

\subsection{Moduli spaces and differentials}
We consider the moduli space of smooth $\vecV$-valued differential $1$-forms
$$
\scrF  = \Omega^1(\Sigma, \vecV)
$$
and the moduli space of smooth maps
$$
\scrM = C^\infty(\Sigma, V).
$$
Notice that 
$\scrM$ is an affine space with $\Omega^0(\Sigma,\vecV)$ as  the underlying
vector space.
For every smooth map, $f:M\to V$, the tangent map $f_*:T\Sigma\to
TV=V\times\vecV$ can
be regarded as a differential 
$$
d f:T\Sigma\to\vecV
$$
given by $d f=\pi_2\circ f_*$, where $\pi_2:TV\to\vecV$ is the second
canonical projection. Hence we have a differential operator between moduli spaces 
$$
\scrM \stackrel {d}\longrightarrow \scrF.
$$
The image of $d$ is the subspace of \emph{exact} $\vecV$-valued  differential
forms, denoted $\scrF_0\subset\scrF$.
Furthermore, $d$ is injective up to  translations by
constant map in $\Omega^0(\Sigma,\vecV)$, identified to $\vecV$.
Thus, $d$ induces a bijection
$$
\scrM/\vecV \stackrel d \longrightarrow  \scrF_0 .
$$
\subsection{Euclidean structure and Hodge theory}
The fiberwise Euclidean inner product $g$ on the vector bundle $\Lambda^n
\Sigma \otimes
\vecV\to \Sigma$
induces an $L^2$-Euclidean inner product on $\Omega^n(\Sigma, \vecV)$, given by
\begin{equation}
\label{eq:dfnGmetric}
\cG(\beta_1,\beta_2) = \int_\Sigma g(\beta_1,\beta_2)\;\omega_\Sigma, \quad\mbox{for
	$\beta_1,\beta_2 \in \Omega^n(\Sigma, \vecV)$.}
\end{equation}
For simplicity, we will often use the notations
$$
\ip{\beta_1,\beta_2} = g(\beta_1,\beta_2),\quad 
\ipp{\beta_1,\beta_2} = \cG(\beta_1,\beta_2)
$$
and
$$
|\beta|=\sqrt{g(\beta,\beta)},\quad \|\beta\|_{L^2}=
\sqrt{\cG(\beta,\beta)}.
$$
 The formal adjoint $d^\star$ of $d$ is defined by the property
$$
\cG(\beta_1,d\beta_2)=\cG(d^\star\beta_1,\beta_2) \quad\mbox {for
$\beta_1\in \Omega^{n+1}(\Sigma,
\vecV)$
and $\beta_2 \in \Omega^{n}(\Sigma, \vecV)$}
$$
 and the \emph{Laplace operator} 
$\Delta$ is given by the formula
 $$\Delta=dd^\star+d^\star d.$$
The classical \emph{Hodge theory} extends to $\vecV$-valued differential
forms.
In particular, we obtain  a $\cG$-orthogonal projection onto
the space of $\vecV$-valued exact $1$-forms denoted
$$
\Pi :\scrF\to\scrF_0.
$$

\emph{Hölder spaces} are better suited for elliptic operators, as the Laplace operator $\Delta$.
Recall that the Riemannian metric $g_\Sigma$ and the fiberwise inner product $g$
induce a $C^{k,\nu}$-\emph{Hölder norm} for tensor fields over $\Sigma$,
denoted $\|\cdot\|_{k,\nu}$,
where $k\in\NN$ is the number of derivatives and $\nu\in(0,1)$ is the Hölder
regularity exponent for the $k$-th derivative~(cf. \cite{Joy} for an
explicit definition).

Hölder norms define Hölder completed spaces. In particular, we
denote 
by $\scrF^{k,\nu}$ and $\scrM^{k,\nu}$ the completion of $\scrF$ and
$\scrM$.
For every $F\in\scrF^{k,\nu}$  with $k\geq 0$, the \emph{Hodge
decomposition} theorem
states that $F$ admits a $\cG$-orthogonal decomposition
$$
F=F_h +\Delta G,
$$
where $G\in\scrF^{k+2,\nu}$ and $F_h$ is a smooth harmonic form.
In particular, we can define an orthogonal \emph{Hodge projection}
$\Pi:\scrF^{k,\nu}\to\scrF^{k,\nu}_0$ by
$$
\Pi(F)= dd^\star G
$$
of $F$ onto its exact component.
By Proposition~\ref{prop:hodge}, the projector $\Pi$ is continuous with
respect to Hölder topology. This result, which is an immediate consequence of Hodge
theory,  is a crucial argument for the proof of
Theorem~\ref{theo:smoothflow}, via the Cauchy-Lipschitz theorem (cf.
Theorem~\ref{theo:lip}). We provide a proof of the proposition for the sake of
self-containedness.
\begin{prop}
\label{prop:hodge}
	For $k\geq 0$ and $\nu\in(0,1)$, the orthogonal 
	projection $\Pi:\scrF^{k,\nu} \to \scrF_0^{k,\nu}$ onto the
	exact component of a differential form is a continuous linear map with respect to the
$C^{k,\nu}$-norm. 
In other words, there exists a real constant $c>0$ such that
$$
\|\Pi(F)\|_{{k,\nu}}
\leq c\|F\|_{{k,\nu}},\quad \mbox{for every $F\in\scrF^{k,\nu}$.}
$$
\end{prop}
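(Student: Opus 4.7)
The plan is to combine the $L^2$-orthogonal Hodge decomposition of $\vecV$-valued $1$-forms with standard Schauder elliptic regularity for the Laplacian $\Delta$ on the closed surface $\Sigma$. Since the fiberwise Hermitian structure on $\Lambda^n\Sigma\otimes\vecV$ splits as a tensor product, Hodge theory for $\vecV$-valued forms reduces to the scalar case applied component-wise with respect to a real orthonormal basis of $\vecV$, giving the $L^2$-orthogonal decomposition
$$
\scrF = \cH^1 \oplus d\,\Omega^0(\Sigma,\vecV) \oplus d^\star\Omega^2(\Sigma,\vecV),
$$
where $\cH^1$ is the finite-dimensional space of smooth harmonic $\vecV$-valued $1$-forms, and $\Pi$ is the projection onto the middle summand.

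I would first treat the harmonic projection $F \mapsto F_h$. Pick a smooth $L^2$-orthonormal basis $h_1,\dots,h_N$ of $\cH^1$, and write $F_h = \sum_{i=1}^{N}\ipp{F,h_i}\,h_i$. Each coefficient is controlled by $|\ipp{F,h_i}| \leq \|F\|_{L^2}\le c_1\|F\|_{C^0}\le c_2\|F\|_{k,\nu}$, while the quantities $\|h_i\|_{k,\nu}$ are fixed constants. Hence $\|F_h\|_{k,\nu} \leq c_3\,\|F\|_{k,\nu}$.

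For the exact part, introduce the Green operator $\Gamma$ on $\scrF$, satisfying $\Delta\,\Gamma = \id - \Pi_\cH$, where $\Pi_\cH$ denotes the orthogonal projection onto~$\cH^1$. The Hodge decomposition then reads $F = F_h + dd^\star\Gamma F + d^\star d\,\Gamma F$, so that $\Pi(F)=d\,d^\star\,\Gamma F$. The analytic input is the improved Schauder estimate on the closed Riemannian surface: for every $\alpha$ in the $L^2$-orthogonal complement of $\ker\Delta$,
$$
\|\alpha\|_{k+2,\nu} \leq c_4\,\|\Delta\alpha\|_{k,\nu}.
$$
This follows from the classical interior estimate $\|\alpha\|_{k+2,\nu}\le c\bigl(\|\Delta\alpha\|_{k,\nu} + \|\alpha\|_{C^0}\bigr)$ by a standard compactness/contradiction argument absorbing the $C^0$-term using the finite dimensionality of $\ker\Delta$. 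Taking $\alpha = \Gamma F$ gives $\|\Gamma F\|_{k+2,\nu}\le c_5\|F - F_h\|_{k,\nu}$, and since $dd^\star$ is a second-order differential operator we conclude
$$
\|\Pi(F)\|_{k,\nu} = \|dd^\star\Gamma F\|_{k,\nu} \leq c_6\,\|\Gamma F\|_{k+2,\nu} \leq c_7\,\|F\|_{k,\nu}.
$$

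The only non-trivial ingredient is the global Schauder estimate on a closed Riemannian manifold in Hölder spaces, which is classical (see e.g.~\cite{Joy}, already referenced in the excerpt); everything else is bookkeeping. Passing from scalar to $\vecV$-valued forms is automatic since $\vecV$ enters as a trivial flat Euclidean factor, so the main obstacle is really just the careful invocation of elliptic regularity on Hölder spaces rather than Sobolev spaces.
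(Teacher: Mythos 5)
Your proof is correct and follows essentially the same route as the paper: Hodge decomposition plus global Schauder estimates for $\Delta$ with orthogonality to the finite-dimensional kernel, together with an elementary bound on the harmonic projection. If anything, your version is slightly cleaner, since you estimate the Green-operator term $\Gamma F$ directly from $\Delta\Gamma F = F - F_h \in C^{k,\nu}$, whereas the paper's write-up detours through $\Delta F = \Delta^2 G$ and $C^{k-2,\nu}$-norms before applying the Schauder estimates.
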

\begin{proof}
	For $F\in\scrF^{k,\nu}$, the Hodge decomposition theorem shows that
	\begin{equation}
		\label{eq:hodge1}
F= F_h + \Delta G,
	\end{equation}
	where $G\in \scrF^{k+2,\nu}$ is a $1$-form orthogonal to harmonic forms and $F_h$ is the
	harmonic part of $F$.

	A $C^{k,\nu}$-estimate for $F$
provides a	control on the $C^{k-2,\nu}$-norm of $\Delta F$.
	By Formula~\eqref{eq:hodge1}, we have $\Delta F = \Delta^2 G$, since $F_h$ is harmonic.
	Hence the $C^{k,\nu}$-norm of
	$F$ control the $C^{k-2,\nu}$-norm of $\Delta^2 G$. The operator
	$\Delta$ is selfajoint, hence $\Delta G$
	is orthogonal to the kernel of $\Delta$. Then, elliptic Schauder estimates
	provide a control on the $C^{k,\nu}$-norm of $\Delta G$. Since $G$ was chosen
	orthogonal to harmonic forms, we deduce a $C^{k+2,\alpha}$ control
	on $G$ by the Schauder estimates.
	In conclusion, there exists a universal constant $c_1>0$, such that
$$
	\|G\|_{{k+2,\nu}}\leq c_1\|F\|_{{k,\nu}}.
$$
	 Finally, the $C^{k+2,\nu}$-norm of $G$ controls the
	$C^{k,\nu}$-norm of $dd^\star G$ and we 
	conclude that there exists a universal constant $c>0$ that
	satisfies the proposition.
\end{proof}

\subsection{Kähler structures on the moduli space}
The space of differential forms $\Omega^n(\Sigma, \vecV)$ has a structure
of module over $C^\infty(\Sigma,\CC)$, acting by complex multiplication on
$\vecV$-valued forms. More precisely
$$
(\lambda F)_x = \lambda(x)F_x
$$
for every $\lambda \in C^\infty(\Sigma,\CC)$, $F\in\Omega^n(\Sigma,\vecV)$ and
$x\in\Sigma$.

The space  of differential $1$-forms $\scrF$ admits an alternate almost
complex structure 
$$\cJ:\scrF\to\scrF,
$$
defined by
\begin{equation}
	\label{eq:dfnJ}
	(\cJ \cdot F)_x\cdot \eta  =  (JF_x)\cdot \eta = - F_x\circ J_\Sigma \cdot \eta,
\end{equation}
where $J$ is defined by Formula~\eqref{eq:JF},
for every $x\in\Sigma$ and $\eta\in T_x\Sigma$.
By construction, the almost complex structure $\cJ$ is compatible with the
Eulidean $L^2$-inner product $\cG$ on $\scrF$. The corresponding
 Kähler form $\Omega$ deduced from $\cG$ and $\cJ$ is given by
$$\Omega(\dot F_1,\dot F_2)= \cG(\cJ\dot F_1,F_2)$$
for every $\dot F_1,\dot F_2\in\scrF$.
Equivalently
$$
\Omega(\dot F_1,\dot F_2) = \int_\Sigma \omega(\dot F_1,\dot
F_2)\omega_\Sigma,
$$
where $\omega (\dot F_1,\dot F_2)= g(J\cdot \dot F_1,\dot F_2)$.

In conclusion, we have a natural  Kähler
structure 
$$
(\scrF,\cG,\cJ,\Omega)
$$
on the moduli space $\scrF$.

\subsection{An involution}
The complex vector space $\scrF$ 
 admits several almost complex
structures: the almost complex structure $i:\scrF\to\scrF$, corresponding
to
the multiplication by $i$ and the almost complex structure $\cJ$ described above.
By construction, the two almost complex structure $i$ and $\cJ$ commute.
Therefore,  the endomorphism 
$$
\scrR:\scrF\to\scrF
$$
defined by
$$
\scrR F =  i\cJ \cdot F
$$
is a \emph{linear isometric involution}  of $\scrF$, which commutes with $i$
and $\cJ$. We obtain a $\cG$-orthogonal decomposition
$$
\scrF=\scrF^+ \oplus \scrF^-
$$
where $\scrF^\pm$ are the eigenspaces associated to the eigenvalues $\pm 1$ of
$\scrR$. More explicitely, we have
$$
\scrF^+=\{F\in\scrF, F\circ J_\Sigma = i F \} \quad \mbox{ and }
$$
$$\scrF^-=\{F\in\scrF, F\circ J_\Sigma = - i F \}.
$$ 
In other words,  $\scrF^+$ (resp. $\scrF^-$)  is the
subspace of
complex  (resp. anti-complex) morphisms  $F:T\Sigma\to \vecV$.  
Hence, every $F\in\scrF$ admits a unique orthogonal decomposition
$$
F=F^++F^-,
$$
where $F^\pm\in\scrF^\pm$. By definition, we have
$$
\scrR F= F^+-F^-.
$$
\subsection{Gauge group action}
We  consider the infinite dimensional complex Lie group 
$$
\TT^\CC = C^\infty(\Sigma,\CC^*),
$$
with trivial Lie algebra 
$$\LieTT^\CC= C^\infty(\Sigma,\CC).$$
We define an action of $\TT^\CC$ on $\scrF$ as follows:
given $\lambda\in\TT^\CC$ and
$F\in\scrF$,
we put
\begin{equation}
	\label{eq:cxaction}
\lambda \cdot F = \bar\lambda^{-1} F^+ + \lambda F^-,
\end{equation}
where $F=F^++F^-$ according to the splitting $\scrF=\scrF^+\oplus \scrF^-$
and $\bar\lambda$ denotes the complex conjugate.
By construction the action of $\TT^\CC$ preserves the almost complex
structure $\cJ$ for the following obvious reason:
\begin{lemma}
Each
	subspace $\scrF^\pm$ is $\cJ$ invariant and the action of $\TT^\CC$ on $\scrF$ is $\cJ$-linear. 
\end{lemma}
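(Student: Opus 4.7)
The plan is to observe that on the decomposition $\scrF=\scrF^+\oplus\scrF^-$, the almost complex structure $\cJ$ acts as a scalar on each summand, and then to exploit this to immediately get the $\cJ$-linearity of the gauge action.

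First I would verify the invariance of $\scrF^\pm$ under $\cJ$ by direct computation. For $F\in\scrF^+$ we have $F\circ J_\Sigma=iF$, so by the definition $\cJ F=-F\circ J_\Sigma=-iF$. Applying $\cdot\circ J_\Sigma$ once more and using $J_\Sigma^2=-\id_{T\Sigma}$ gives $(\cJ F)\circ J_\Sigma=-F\circ J_\Sigma^2=F=i(\cJ F)$, so $\cJ F\in\scrF^+$. The same computation with a sign change shows that for $F\in\scrF^-$ one has $\cJ F=iF\in\scrF^-$. Thus
\[
\cJ|_{\scrF^+}=-i\cdot \id,\qquad \cJ|_{\scrF^-}=+i\cdot\id,
\]
which in particular establishes that each $\scrF^\pm$ is $\cJ$-invariant.

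Next I would check the $\cJ$-linearity of the $\TT^\CC$-action. Since the formula $\lambda\cdot F=\bar\lambda^{-1}F^++\lambda F^-$ manifestly respects the decomposition $\scrF=\scrF^+\oplus\scrF^-$ (i.e.\ $(\lambda\cdot F)^\pm$ is a scalar function times $F^\pm$), and since $\cJ$ acts as multiplication by a constant scalar ($\mp i$) on each eigenspace, the two operations commute. Concretely,
\[
\cJ(\lambda\cdot F)=-i\,\bar\lambda^{-1}F^++i\,\lambda F^-=\bar\lambda^{-1}(-iF^+)+\lambda(iF^-)=\lambda\cdot(\cJ F),
\]
using that multiplication by $\bar\lambda^{-1}$ (resp.\ $\lambda$) is $\CC$-linear on $\vecV$-valued forms and hence commutes with multiplication by $\mp i$.

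There is no substantial obstacle: the whole content is that $\cJ$ is diagonalized by the splitting $\scrF=\scrF^+\oplus\scrF^-$ with constant eigenvalues $\mp i$, so the formula~\eqref{eq:cxaction}, which is tailored to act by $\CC$-linear multiplication on each summand, automatically intertwines with $\cJ$. The only point requiring care is the book-keeping of signs, namely keeping track of the opposite eigenvalues of $\cJ$ on $\scrF^+$ and $\scrF^-$ and the corresponding factors $\bar\lambda^{-1}$ versus $\lambda$ in the gauge action.
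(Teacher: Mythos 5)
Your proof is correct, and it is exactly the argument the paper has in mind: the paper states the lemma without proof (calling it obvious "by construction"), the intended reason being precisely your observation that $\cJ$ acts as $\mp i$ on $\scrF^\pm$ while the gauge action is multiplication by complex-valued functions on each summand, so the two commute. Your sign bookkeeping ($\cJ|_{\scrF^+}=-i\,\id$, $\cJ|_{\scrF^-}=+i\,\id$, consistent with $\scrR=i\cJ$ having eigenvalues $\pm1$) checks out.
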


The group $\TT^\CC$ contains the real subgroup
$$
\TT=  C^\infty(\Sigma,S^1),
$$
where $S^1\subset \CC$ is the unit circle.
	If $\lambda\in \TT$, then
$\bar\lambda^{-1} = \lambda$, and group action is merely  given by complex
multiplication:
$$
\lambda\cdot F= \lambda F,\quad \mbox{ for every } \lambda \in \TT.
$$

\subsection{Infinitesimal gauge group action}
For $\zeta\in \LieTT^\CC=C^\infty(\Sigma,\CC)$  we define an exponential map
 $$\exp :\LieTT^\CC \to \TT^\CC,
 $$
 by 
 $$\exp(\zeta)=e^{i\zeta},$$
 so that the space of real valued functions $\LieTT=C^\infty(\Sigma,\RR)$ is identified to the Lie
 algebra of $\TT$. 
As usual, the infinitesimal action of $\zeta\in\LieTT^\CC$ is the vector
field $X_\zeta$ on the moduli space $\scrF$, defined by
$$
 X_\zeta(F) = \left . \frac d{dt}\right |_{t=0}\exp (t\zeta)\cdot F. 
$$
 Formula~\eqref{eq:cxaction} shows that 
\begin{equation}
	\label{eq:infcxaction}
 X_\zeta(F) =i\bar\zeta F^+ + i\zeta F^-.
\end{equation}
In particular, if $\zeta\in\LieTT$,  we have
\begin{equation}
	\label{eq:infcxactionreal}
X_\zeta(F)= i\zeta F.
\end{equation}
If, on the contrary, $\zeta\in i\LieTT$ is a purely imaginary function, we have
\begin{equation}
	\label{eq:infcxactionim}
X_\zeta(F)= -i\zeta \scrR F.
\end{equation}
By~\eqref{eq:infcxactionreal} and \eqref{eq:infcxactionim}, the
infinitesimal action of $\zeta \in
\LieTT$ satisfies
$$
\cJ \cdot X_{\zeta}(F) = \cJ \cdot  i\zeta F = \zeta \scrR F  = -i (i\zeta) \scrR F =
 X_{i\zeta}(F),
$$
and we deduce the following result:
\begin{lemma}
	The action of $\TT^\CC$ on $\scrF$ is
	 the $\cJ$-complexification of the action of $\TT$. 
\end{lemma}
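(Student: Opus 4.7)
The plan is to unpack the meaning of ``$\cJ$-complexification of the $\TT$-action'' and verify the two requirements in turn. By definition, to say that the $\TT^\CC$-action is the $\cJ$-complexification of the $\TT$-action means that (i) the restriction of the $\TT^\CC$-action to the real subgroup $\TT$ recovers the original $\TT$-action, and (ii) at the Lie algebra level, the decomposition $\LieTT^\CC = \LieTT \oplus i\LieTT$ is compatible with the almost complex structure $\cJ$ on $\scrF$ in the sense that $X_{i\zeta} = \cJ \cdot X_\zeta$ for every $\zeta \in \LieTT$.

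For (i), I would simply observe that when $\lambda \in \TT = C^\infty(\Sigma, S^1)$, one has $\bar\lambda^{-1} = \lambda$, so Formula~\eqref{eq:cxaction} yields
\begin{equation*}
\lambda \cdot F = \lambda F^+ + \lambda F^- = \lambda F,
\end{equation*}
which is exactly the $\TT$-action by pointwise complex multiplication described just after the formula.

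For (ii), the computation is already performed in the paragraph preceding the lemma: combining Formulas~\eqref{eq:infcxactionreal} and \eqref{eq:infcxactionim} together with $\cJ F = iF^+ - iF^-$ (equivalently $\cJ \cdot (iF) = i(\cJ F) = -(\scrR F \cdot i \cdot i)\dots$), one checks that for $\zeta \in \LieTT$,
\begin{equation*}
\cJ \cdot X_\zeta(F) = \cJ \cdot (i\zeta F) = \zeta \, \scrR F = X_{i\zeta}(F),
\end{equation*}
where the middle equality uses that $\cJ$ commutes with multiplication by the real function $\zeta$, and that $\cJ \cdot (iF) = \scrR F$ by the very definition of $\scrR$. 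This identifies the vector field associated to $i\zeta$ with $\cJ$ applied to the vector field associated to $\zeta$, which is the required compatibility.

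There is essentially no obstacle: the work has been front-loaded into the definitions and the paragraph immediately preceding the lemma. The only point requiring slight care is to be explicit that ``complexification of the action'' is being used in the standard sense for an action of a real Lie group on an almost complex manifold, namely the two conditions (i) and (ii) above; once this is spelled out, the result follows from direct inspection of Formulas~\eqref{eq:cxaction}, \eqref{eq:infcxactionreal} and \eqref{eq:infcxactionim}. Integrating (ii) along one-parameter subgroups then produces the full $\TT^\CC$-action as the unique holomorphic extension of the $\TT$-action, confirming the terminology.
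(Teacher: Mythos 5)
Your proposal is correct and follows essentially the same route as the paper: the paper's proof is precisely the computation $\cJ\cdot X_\zeta(F)=\cJ\cdot i\zeta F=\zeta\,\scrR F=X_{i\zeta}(F)$ carried out in the paragraph preceding the lemma, together with the earlier observation that $\bar\lambda^{-1}=\lambda$ for $\lambda\in\TT$ so the restriction is the original action. Only your parenthetical aside is off (with the paper's conventions $\cJ F=-iF^++iF^-$, not $iF^+-iF^-$), but it plays no role in the argument, whose key step $\cJ\cdot(iF)=i\cJ F=\scrR F$ is correct.
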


In addition, the gauge group $\TT$ acts isometrically on $\scrF$. Indeed for every
$\lambda\in \TT$ and $F\in \scrF$,
$$
\|\lambda\cdot F \|^2_{L^2} = \|\lambda F\|^2_{L^2} =
 \|F\|^2_{L^2}.
$$
In conclusion, we have the following result:
\begin{prop}
	\label{lemma:inftac}
	 The  action
	of $\TT$ on $\scrF$ is linear and preserves the Kähler structure
	$(\scrF,\cG,\cJ,\Omega)$. 
\end{prop}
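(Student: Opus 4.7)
The plan is to verify each piece of the Kähler structure $(\scrG,\cJ,\Omega)$ separately; the proposition assembles facts that have essentially already been collected in the preceding paragraphs, so no new computation beyond polarization is needed.

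First I would note the $\RR$-linearity of the $\TT$-action. For $\lambda\in\TT$ the map $F\mapsto \lambda\cdot F=\lambda F$ is pointwise multiplication by $\lambda(x)\in S^1\subset\CC^*$, which is $\CC$-linear on each fiber of $\Lambda^1\Sigma\otimes\vecV\to\Sigma$, hence in particular $\RR$-linear. This takes care of linearity.

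Next I would record that $\cJ$-invariance is already a consequence of the first lemma of this subsection, which asserts that the action of the larger group $\TT^\CC$ preserves $\cJ$. Restricting to the real subgroup $\TT\subset\TT^\CC$ gives $\cJ\circ(\lambda\cdot)=(\lambda\cdot)\circ\cJ$ for every $\lambda\in\TT$. For $\cG$-invariance, the norm computation $\|\lambda\cdot F\|_{L^2}^2=\|\lambda F\|_{L^2}^2=\|F\|_{L^2}^2$ displayed just above the proposition shows that each $\lambda\in\TT$ acts as an $L^2$-isometry, because $|\lambda(x)|=1$ pointwise and the fiberwise Hermitian product $h$ is $\CC$-sesquilinear so its real part $g$ is invariant under multiplication by unit complex numbers. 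The polarization identity then upgrades this to $\cG(\lambda\cdot F_1,\lambda\cdot F_2)=\cG(F_1,F_2)$ for all $F_1,F_2\in\scrF$.

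Finally, $\Omega$-invariance is a formal consequence: by definition $\Omega(\dot F_1,\dot F_2)=\cG(\cJ\cdot\dot F_1,\dot F_2)$, and the previous two steps give
\[
\Omega(\lambda\cdot\dot F_1,\lambda\cdot\dot F_2)
=\cG(\cJ\cdot(\lambda\cdot\dot F_1),\lambda\cdot\dot F_2)
=\cG(\lambda\cdot\cJ\cdot\dot F_1,\lambda\cdot\dot F_2)
=\cG(\cJ\cdot\dot F_1,\dot F_2)
=\Omega(\dot F_1,\dot F_2).
\]
There is no real obstacle here; the only mild point of care is to check that the fiberwise real inner product $g$ on $\Lambda^1\Sigma\otimes\vecV$ is invariant under pointwise multiplication by elements of $S^1$, which follows from its definition via $h_V$ in \S\ref{sec:fiberwise}. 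Having established linearity together with invariance of $\cG$, $\cJ$ and $\Omega$, the proposition is proved.
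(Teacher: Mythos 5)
Your proposal is correct and follows essentially the same route as the paper, which likewise deduces $\cJ$-invariance from the earlier lemma on the $\TT^\CC$-action and isometry from the pointwise computation $\|\lambda\cdot F\|_{L^2}^2=\|\lambda F\|_{L^2}^2=\|F\|_{L^2}^2$, with invariance of $\Omega=\cG(\cJ\cdot,\cdot)$ then formal. Your added remarks on polarization and on the pointwise $S^1$-invariance of $g$ are fine but do not change the argument.
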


\subsection{Symplectic density}
\label{sec:hamtorus}
The  map
\begin{equation}
	\begin{tikzcd}[  baseline=(current  bounding  box.center), cramped,
  row sep = 0ex,
  column sep = 1.5em,
  /tikz/column 1/.append style={anchor=base east},
  /tikz/column 2/.append style={anchor=base west}
   ]
		\mu :  \scrF \ar[r] & \LieTT =C^\infty(\Sigma,\RR)  \\
		 F\ar[r,mapsto] & 
		- \frac 12 g(F,\scrR F).
\end{tikzcd}
\end{equation}
 can be interpreted
as a \emph{symplectic density}, according to the following lemma:
\begin{lemma}
	\label{lemma:mm}
	The following formulas hold 
\begin{equation}
	\label{eq:mm}
\mu(F)
= -\frac 12 \omega(iF, F) = -\frac
12(|F^+|^2 - |F^-|^2)= -\frac{F^*\omega_V}{\omega_\Sigma},
\end{equation}
	for every $F\in\scrF$, where the pullback is defined by
	$(F^*\omega_V)(\eta_1,\eta_2) =
	\omega_V(F(\eta_1),F(\eta_2))$, for every $x\in\Sigma$ and
	$\eta_1,\eta_2\in
	T_x\Sigma$.
\end{lemma}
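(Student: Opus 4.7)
The proof amounts to verifying a chain of four equalities, which I would handle link by link, the only substantive one being the identification with the symplectic density.

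First, observe that $\scrR = i\cJ$ acts as $+\id$ on $\scrF^+$ and $-\id$ on $\scrF^-$ by construction of these eigenspaces, so $\scrR F = F^+ - F^-$. Using the $\cG$-orthogonality of $\scrF^+$ and $\scrF^-$ (applied fiberwise to $g$), I expand $g(F,\scrR F) = g(F^++F^-, F^+ - F^-) = |F^+|^2 - |F^-|^2$. This gives the middle equality of the chain. For the equality $g(F,\scrR F) = \omega(iF,F)$, I would use the fiberwise definition $\omega(\alpha,\beta) = g(J\cdot\alpha,\beta)$ and the fact that multiplication by $i$ on $\vecV$ commutes with the action of $J$ (which acts by precomposition with $J_\Sigma$ on the other tensor factor): $\omega(iF,F) = g(J\cdot iF,F) = g(i J\cdot F,F) = g(\scrR F,F)$, invoking $(\cJ F)_x = JF_x$ from the definition of $\cJ$.

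The meaningful step is the identification $-\tfrac{1}{2}(|F^+|^2 - |F^-|^2) = -F^*\omega_V/\omega_\Sigma$, which I would check pointwise. Fix $x\in\Sigma$ and pick a positively oriented $g_\Sigma$-orthonormal basis $(e_1,e_2)$ of $T_x\Sigma$ with $J_\Sigma e_1 = e_2$, and set $v_i := F_x(e_i) \in \vecV$. The defining conditions $F^\pm_x\circ J_\Sigma = \pm i F^\pm_x$ force $F^\pm_x(e_2) = \pm i F^\pm_x(e_1)$, and the constraint $F_x = F^+_x + F^-_x$ then yields $F^\pm_x(e_1) = \tfrac{1}{2}(v_1 \mp i v_2)$. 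Since the dual basis $(e_1^*, e_2^*)$ is orthonormal, one has $|F^\pm_x|^2 = |F^\pm_x(e_1)|^2 + |F^\pm_x(e_2)|^2 = 2|F^\pm_x(e_1)|^2$ (using that $i$ acts isometrically on $\vecV$). Expanding these norms using the compatibility identity $g_V(iu,v) = \omega_V(u,v)$ produces
$$|F^+_x|^2 - |F^-_x|^2 = 2\,\omega_V(v_1,v_2).$$
Since $(F^*\omega_V)_x(e_1,e_2) = \omega_V(v_1,v_2)$ and $(\omega_\Sigma)_x(e_1,e_2) = 1$, the ratio $F^*\omega_V/\omega_\Sigma$ at $x$ equals $\omega_V(v_1,v_2)$, and the last equality follows.

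There is no real obstacle here: the lemma is a bookkeeping verification, and the only place careful attention is needed is in fixing sign conventions in the orthogonal decomposition $F = F^+ + F^-$ and in correctly applying the anti-self-adjointness $g_V(iu,v) = -g_V(u,iv)$ of multiplication by $i$. Every other identity follows immediately from the definitions of $\scrR$, $\cJ$, $\omega$, and the compatibility relations among $g_V$, $\omega_V$, and $i$ on $\vecV$.
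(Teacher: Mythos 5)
Your proof is correct and follows essentially the same route as the paper: the first two identities are verified exactly as in the paper's proof, and the last one is again a pointwise computation in an oriented orthonormal basis using the compatibility $\omega_V(\cdot,\cdot)=g_V(i\cdot,\cdot)$. The only cosmetic difference is that for the last equality you solve explicitly for $F^\pm(e_1)=\tfrac12(v_1\mp i v_2)$ and expand $|F^+|^2-|F^-|^2$, whereas the paper computes $\scrR F(e_1)=-iF(e_2)$, $\scrR F(e_2)=iF(e_1)$ and evaluates $g(\scrR F,F)$ directly; both are equivalent bookkeeping.
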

\begin{proof}
	Using the fact that $-\frac 12g(\scrR F, F)=-\frac 12 g(i\cJ F,F) 
	= -\frac 12 g(\cJ i F, F) =-\frac 12
	\omega (iF,F)$ we deduce the first identity.
	
	Using the  decomposition $F= F^+ +F^-$, we write
	$g(\scrR F,F)= g(F^+ - F^- , F^+ + F^-)$. Using the fact the $F^+$
	and $F^-$ are pointwise $g$-orthogonal, we deduce that $g(\scrR F,
	F) = |F^+|^2- |F^-|^2$,
	which proves the second identity.

Let $x\in\Sigma$ and $(e_1,e_2)$ be a $g_\Sigma$-orthonormal oriented basis of
	$T_x \Sigma$. In particular $\omega_\Sigma(e_1,e_2)=1$ and
	$J_\Sigma e_1 =e_2$.
	Hence $(\cJ\cdot F)(e_1) = -F(J_\Sigma e_1) = -F(e_2)$. Similarly
	$(\cJ\cdot F)(e_2) = F(e_1)$. Hence $\scrR F(e_1)= -iF(e_2)$
	and $\scrR F(e_2)=iF(e_1)$. By definition
	\begin{align*}
		\mu(F)(x) &= -\frac 12 g(\scrR F, F)(x) \\
		&=- \frac 12 g_V(\scrR F(e_1), F(e_1)) - \frac 12
		g_V(\scrR F(e_2), F(e_2)) \\
		&= -\frac 12( - g_V(iF(e_2),F(e_1)) + g_V(iF(e_1) ,
		F(e_2)) )\\
		&= -\omega_V(F(e_1),F(e_2)) \\
		&=-
		\frac{(F^*\omega_V)(e_1,e_2)}{\omega_\Sigma(e_1,e_2)},
	\end{align*}
which proves the last identity of the lemma.
\end{proof}

We introduce some h-principle terminology before stating an immediate
corollary below.
\begin{dfn}
	\label{dfn:monoiso}
A differential form $\scrF$ is called isotropic if $F:T\Sigma\to \vecV$ maps
	every tangent space to an isotropic subspace of $V$ or,
	equivalently, if $F^*\omega_V=0$. If the restriction of
	$F:T\Sigma\to \vecV$ to every tangent space is injective, we say
	that $F$ is a monomorphism.
\end{dfn}

\begin{cor}
	\label{lemma:vanishmm}
	Let $F$ be an element of $\scrF$.
	The following properties are equivalent:
	\begin{enumerate}
		\item \label{it:isotropic} $F\in\scrF$ is isotropic
	\item  \label{it:zero}$\mu(F)=0 \in \LieTT$ .
		\item $|F^+| = |F^-|$ identically on $\Sigma$.
	\end{enumerate}
\end{cor}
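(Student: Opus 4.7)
The proof is essentially a direct translation of Lemma~\ref{lemma:mm}, so the plan is very short. The key formula already established is
$$
\mu(F) = -\frac{1}{2}\bigl(|F^+|^2 - |F^-|^2\bigr) = -\frac{F^*\omega_V}{\omega_\Sigma},
$$
which packages all three quantities appearing in the corollary into a single identity of smooth functions on $\Sigma$.

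My plan is to read off each equivalence from a distinct piece of this identity. For (\ref{it:isotropic}) $\Leftrightarrow$ (\ref{it:zero}), I would use the last equality: since $\omega_\Sigma$ is a nowhere-vanishing volume form on $\Sigma$, the function $\mu(F) = -F^*\omega_V/\omega_\Sigma$ vanishes identically if and only if the $2$-form $F^*\omega_V$ vanishes identically, which is precisely the definition of $F$ being isotropic (Definition~\ref{dfn:monoiso}). For (\ref{it:zero}) $\Leftrightarrow$ (3), I would use the middle equality: since $|F^+|^2$ and $|F^-|^2$ are non-negative real-valued functions, the combination $|F^+|^2 - |F^-|^2$ is zero identically if and only if $|F^+| = |F^-|$ pointwise on $\Sigma$.

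There is really no obstacle here; the only thing worth double-checking is that the orientation and sign conventions coming from the identification $\omega_\Sigma(e_1,e_2)=1$ in the proof of Lemma~\ref{lemma:mm} are consistent globally, so that the pointwise identity $\mu(F)(x) = -(F^*\omega_V)_x/(\omega_\Sigma)_x$ makes sense as an equality of smooth functions. Once that is noted, the three equivalences follow immediately and the proof reduces to a one-line invocation of Lemma~\ref{lemma:mm}.
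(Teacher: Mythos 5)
Your argument is correct and matches the paper's intent exactly: the paper presents this corollary as an immediate consequence of Lemma~\ref{lemma:mm}, and your reading of the chain of equalities $\mu(F)=-\tfrac12(|F^+|^2-|F^-|^2)=-F^*\omega_V/\omega_\Sigma$, using that $\omega_\Sigma$ is nowhere vanishing, is precisely the intended one-line deduction.
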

If $F\in\scrF_0$, we deduce the following interpretation for isotropic
maps:
\begin{cor}
	\label{cor:Atech}
For every map $f\in\scrM$, the following properties are equivalent:
	\begin{enumerate}
		\item $f$ is an isotropic map.
		\item $F=df\in\scrF_0$ is isotropic.
		\item The map $f$ satisfies the equation $\mu(d f)=0$.
	\end{enumerate}
	In particular, the space of isotropic maps modulo $\vecV$ agrees with 
	the zeroes of $\mu$ in $\scrF_0$ via the bijection
	$d:\scrM/\vecV\to\scrF_0$.
\end{cor}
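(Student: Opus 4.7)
The plan is to reduce this corollary to a direct application of Corollary~\ref{lemma:vanishmm} (the analogous equivalence for arbitrary $F\in\scrF$) together with the tautological identification of pullbacks. First, I would verify the equivalence of (1) and (2): for a smooth map $f\in\scrM$, the pointwise pullback $f^*\omega_V$ coincides with $(df)^*\omega_V$ in the sense of the pullback formula given in Lemma~\ref{lemma:mm}. Indeed, for every $x\in\Sigma$ and $\eta_1,\eta_2\in T_x\Sigma$,
$$
(f^*\omega_V)_x(\eta_1,\eta_2)=\omega_V(f_*\eta_1,f_*\eta_2)=\omega_V(df_x\cdot\eta_1,df_x\cdot\eta_2)=((df)^*\omega_V)_x(\eta_1,\eta_2),
$$
since $df=\pi_2\circ f_*$ agrees with $f_*$ on the vector factor. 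Thus $f$ is an isotropic map, meaning $f^*\omega_V=0$, if and only if $F=df\in\scrF_0$ satisfies $F^*\omega_V=0$, which by Definition~\ref{dfn:monoiso} is exactly the statement that $F=df$ is isotropic.

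Second, the equivalence of (2) and (3) is immediate by specializing Corollary~\ref{lemma:vanishmm} to the element $F=df\in\scrF_0\subset\scrF$: that corollary asserts that $F$ is isotropic in the sense of Definition~\ref{dfn:monoiso} if and only if $\mu(F)=0\in\LieTT$. No obstacle is expected, since every step is either a direct unfolding of definitions or an instance of an already-proven statement.

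Finally, for the last assertion, recall from the discussion of moduli spaces that $d$ induces a bijection $\scrM/\vecV\to\scrF_0$. Combining this with the equivalence (1)~$\Leftrightarrow$~(3) just established, the image under $d$ of the set of isotropic maps modulo $\vecV$ is exactly the zero set of $\mu$ inside $\scrF_0$, which is the stated identification.
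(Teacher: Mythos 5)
Your proposal is correct and follows exactly the route the paper intends: identify $f^*\omega_V$ with $(df)^*\omega_V$, apply Corollary~\ref{lemma:vanishmm} to $F=df$, and invoke the bijection $d:\scrM/\vecV\to\scrF_0$. The paper presents this corollary as an immediate consequence of those same ingredients, so there is nothing to add.
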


\subsection{Hamiltonian action}
We now show that $\mu$ is indeed a moment map:
\begin{theo}
	\label{theo:mm}
The action of $\TT$ on $\scrF$ is Hamiltonian with
	moment map~$\mu$. More precisely, $\mu$ is $\TT$-invariant and
$$
	D\ipp{\mu,\zeta}=- \iota_{X_\zeta}\Omega
$$
for every $\zeta \in\LieTT$.
\end{theo}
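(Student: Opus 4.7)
The plan is to verify the two parts of the theorem separately: $\TT$-invariance of $\mu$ and the Hamiltonian identity $D\ipp{\mu,\zeta} = -\iota_{X_\zeta}\Omega$. Both reduce to short direct computations once we record one key algebraic fact, namely that the involution $\scrR = i\cJ$ is $g$-self-adjoint. This is automatic: $\scrR$ is the composition of two commuting $g$-isometries (by \S\ref{sec:fiberwise}) and squares to the identity, so it is an orthogonal involution, hence symmetric for $g$, and thus for $\cG$.

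For $\TT$-invariance, I would exploit that a real gauge $\lambda\in\TT$ acts by pointwise multiplication by a unit-modulus scalar, which commutes with both $i$ and $J_\Sigma$. Therefore the splitting $\scrF = \scrF^+\oplus\scrF^-$ is preserved: if $F^+\circ J_\Sigma = iF^+$ then $(\lambda F^+)\circ J_\Sigma = i(\lambda F^+)$, and similarly for $F^-$. Hence $(\lambda F)^\pm = \lambda F^\pm$, and since $|\lambda|\equiv 1$ we get $|(\lambda F)^\pm| = |F^\pm|$ pointwise. Formula~\eqref{eq:mm} then gives $\mu(\lambda F)=\mu(F)$.

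For the moment map identity, I would differentiate $\mu(F) = -\tfrac12 g(F,\scrR F)$ directly. Using that $\scrR$ is a $g$-self-adjoint linear operator,
\[
D\mu(F)\cdot\dot F = -\tfrac12\bigl[g(\dot F,\scrR F) + g(F,\scrR\dot F)\bigr] = -g(\scrR F,\dot F),
\]
so for any $\zeta\in\LieTT$,
\[
D\ipp{\mu,\zeta}(\dot F) = \int_\Sigma \zeta\, g(\scrR F,\dot F)\,(-1)\,\omega_\Sigma = -\cG(\zeta\,\scrR F,\dot F).
\]
On the other hand, by~\eqref{eq:infcxactionreal} we have $X_\zeta(F) = i\zeta F$, and since $\zeta$ is a real scalar function and $\cJ$ is $\CC$-linear (commutes with $i$),
\[
\iota_{X_\zeta}\Omega(\dot F) = \Omega(i\zeta F,\dot F) = \cG(\cJ\,i\zeta F,\dot F) = \cG(\zeta\,\scrR F,\dot F),
\]
where the last equality uses $i\cJ = \scrR$ and that $\zeta$ commutes with $\cJ$ and $i$. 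Comparing the two expressions yields $D\ipp{\mu,\zeta} = -\iota_{X_\zeta}\Omega$.

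The main obstacle is really just bookkeeping of signs and making sure the commutation $\cJ i = i\cJ = \scrR$ is used in the right direction; there is no deep analytic difficulty because $\mu$ is a quadratic, pointwise-defined expression in $F$ and $\zeta$ is a scalar function. The only conceptual point worth flagging is that $\scrR$ is $g$-self-adjoint, which is needed to merge the two terms in $D\mu$ into one; once this is in hand the identification of $D\ipp{\mu,\zeta}$ with $-\iota_{X_\zeta}\Omega$ is immediate.
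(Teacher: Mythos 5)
Your proposal is correct and follows essentially the same route as the paper: the key computation $D\mu|_F\cdot\dot F=-g(\scrR F,\dot F)$ via the $g$-self-adjointness of $\scrR$ (the paper's Lemma~\ref{lemma:dmu}), followed by $\Omega(i\zeta F,\dot F)=\ipp{\zeta\scrR F,\dot F}$ using $X_\zeta(F)=i\zeta F$ and $i\cJ=\scrR$. Your treatment of the $\TT$-invariance is just a slightly more explicit version of what the paper dismisses as clear by definition, so there is nothing to add.
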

\begin{proof}
	The invariance of $\mu$ is clear, by definition.
The proof of the theorem starts with a computation:
	\begin{lemma}
		\label{lemma:dmu}
For every $F,\dot F\in\scrF$, we have 
		$$
		D\mu|_F\cdot \dot F =- g(\scrR F,\dot F).
$$
	\end{lemma}
	\begin{proof}
By bilinearity, $2D\mu|_F \cdot F= - g(\scrR
		F,\dot F) -g(F,\scrR\dot F)$ and he lemma
		follows from the fact that $\scrR$ is pointwise
	$g$-selfadjoint.
	\end{proof}
	For $\zeta\in\LieTT$, we have $X_\zeta(F)=i\zeta F$ by
	Formula~\eqref{eq:infcxactionreal} and it follows that 
	\begin{align*}
		\Omega(X_\zeta(F),\dot F) &= \Omega(i\zeta  F,\dot F ) \\
		&= \ipp{
			\cJ i\zeta F,\dot F}\\
		&= \ipp{\zeta \scrR F,\dot F}\\
		&=  \int_\Sigma g(\zeta \scrR
	F,\dot F) \omega_\Sigma \\
		&= -\int_\Sigma 	\zeta D\mu|_F\cdot \dot F 
	\omega_\Sigma\\
		&=-\ipp{ D\mu|_F\cdot\dot F,\zeta},
	\end{align*}
	 which proves
	the theorem.
\end{proof}
\begin{proof}[Proof of Theorem~\ref{theo:A} and Corollary~\ref{cor:A}] 
The restatement of the constructions carried out at
	\S\ref{sec:dream}, together with Theorem~\ref{theo:mm} and
	Lemma~\ref{lemma:mm} prove Theorem~\ref{theo:A}.
	Corollary~\ref{cor:A} is a restatement of Corollary~\ref{cor:Atech}.
\end{proof}
\subsection{Stability and isotropic maps}
\label{sec:kn}
The Kempf-Ness theory, for Kähler moment map geometry, relates the symplectic
 reduction with geometric invariant theory. This point of
view seems appealing in our case, where $\scrF$ is acted on by the complex
gauge group $\TT^\CC$ and the $\mu$ is a moment map for $\TT$.
	The question of existence of a zero of the moment map in a
	$\TT^\CC$-orbit
is rather trivial:
for simplicity,
we define the $\TT^\CC$-invariant subspace of \emph{generic} differentials
$\scrF_{gen}$ as the subspace of nowhere vanishing differential forms.
Then we have the following result
\begin{lemma}
	For every $F\in\scrF_{gen}$, the following
	properties are equivalent:
	\begin{enumerate}
		\item $F^+\in\scrF_{gen}$ and $F^-\in\scrF_{gen}$.
		\item There exists $\lambda\in\TT^\CC$ such that
			$\mu(\lambda\cdot F)=0$.
	\end{enumerate}
In particular, the orbit of $F\in \scrF^\pm\setminus 0$ does not
contain any zero of the moment map.
\end{lemma}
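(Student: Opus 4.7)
The plan is to reduce the statement to a pointwise algebraic identity and then to a matter of dividing smooth nonvanishing functions. The main ingredient is the explicit description of the $\TT^\CC$-action in Formula~\eqref{eq:cxaction}, combined with the expression $\mu(F)=-\frac 12(|F^+|^2-|F^-|^2)$ from Lemma~\ref{lemma:mm}.

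First I would record how the $\scrF^\pm$-components transform under the action. Since $\scrF^+$ and $\scrF^-$ are complex subspaces preserved by $\cJ$ and the action is $\cJ$-linear on each of them, Formula~\eqref{eq:cxaction} gives $(\lambda\cdot F)^+=\bar\lambda^{-1}F^+$ and $(\lambda\cdot F)^-=\lambda F^-$. Applying Lemma~\ref{lemma:mm} then yields the pointwise identity
$$
\mu(\lambda\cdot F)=-\tfrac 12\bigl(|\lambda|^{-2}|F^+|^2-|\lambda|^{2}|F^-|^2\bigr),
$$
so that $\mu(\lambda\cdot F)=0$ is equivalent to $|\lambda|^{4}|F^-|^2=|F^+|^2$ everywhere on $\Sigma$.

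For the implication $(2)\Rightarrow (1)$, assume such a $\lambda\in\TT^\CC$ exists. Since $\lambda$ is nowhere vanishing, $|\lambda|$ is a smooth positive function, and the identity above forces $|F^-|(x)=0\Leftrightarrow |F^+|(x)=0$. But if both $F^+(x)$ and $F^-(x)$ vanished, then $F(x)=F^+(x)+F^-(x)=0$, contradicting $F\in\scrF_{gen}$; hence $F^+$ and $F^-$ are both nowhere vanishing. For the reverse implication $(1)\Rightarrow(2)$, set
$$
\lambda=\left(\frac{|F^+|}{|F^-|}\right)^{1/2},
$$
viewed as a smooth, real, strictly positive function on $\Sigma$, hence an element of $\TT^\CC$. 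By construction $|\lambda|^4|F^-|^2=|F^+|^2$, so $\mu(\lambda\cdot F)=0$.

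The concluding claim for $F\in\scrF^\pm\setminus 0$ is then immediate: such an $F$ has exactly one of $F^+,F^-$ identically zero, so the component description of the action shows that every $\lambda\cdot F$ stays in the same $\scrF^\pm$, and the formula for $\mu$ reduces to $\mp\tfrac 12|\lambda|^{\mp 2}|F|^2$, which is not the zero function since $F\not\equiv 0$. I do not expect any real obstacle in the argument; the only mildly delicate point is ensuring that the square root defining $\lambda$ lies in $C^\infty(\Sigma,\CC^*)$, which relies crucially on the hypothesis that \emph{both} $|F^+|$ and $|F^-|$ are bounded below by a positive constant on the compact surface $\Sigma$, itself a consequence of being nowhere vanishing.
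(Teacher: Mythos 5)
Your proof is correct and takes essentially the same route as the paper: the paper simply records the identity $\mu(\lambda\cdot F)=-\frac 12\left(\lambda^{-2}|F^+|^2-\lambda^{2}|F^-|^2\right)$ for real nonvanishing $\lambda$ and declares the lemma obvious, which is exactly the pointwise formula you derive (with $|\lambda|$ for general complex $\lambda$) and then exploit. The details you add — the equivalence of the vanishing loci of $F^+$ and $F^-$, the explicit choice $\lambda=(|F^+|/|F^-|)^{1/2}$, and the separate treatment of $F\in\scrF^\pm\setminus 0$ without assuming $F\in\scrF_{gen}$ — are all sound.
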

\begin{proof}
For $\lambda:\Sigma\to \RR\setminus 0$ we have
$$
\mu(\lambda\cdot F)= -\frac 12 (\lambda^{-2}|F^+|^2-\lambda^2|F^-|^2)
$$
and the lemma is obvious.
\end{proof}

However, we are mostly interested in the zeroes of the moment map in
$\scrF_0$, as they are differentials of isotropic maps by
Corollary~\ref{cor:A}.
It would be
interesting to understand how the space of isotropic maps in $\scrM$ is
related to some notion of geometric stability on $\scrF$. 
Unfortunately, the image $\scrF_0=d(\scrM)$  is not $\TT$-invariant and it is not clear how
to obtain an analogue of Kempf-Ness theory from this point.

\subsection{Energy of the moment map}
We consider the energy of the moment map $\mu$, given by the functional
\begin{equation}
	\begin{tikzcd}[  baseline=(current  bounding  box.center), cramped,
  row sep = 0ex,
  column sep = 1.5em,
  /tikz/column 1/.append style={anchor=base east},
  /tikz/column 2/.append style={anchor=base west}
   ]
		\phi :  \scrF \ar[r] & \RR  \\
		 F\ar[r,mapsto] & 
		\phi(F)= \frac 12\|\mu(F)\|^2_{L^2}
\end{tikzcd}
\end{equation}
Obviously, $\phi$ is non negative and 
$$
\phi^{-1}(0)=\mu^{-1}(0)
$$
which is to say that the vanishing locus is the space of isotropic
differential forms in $\scrF$. 
By Corollary~\ref{cor:A}, 
the vanishing locus of $\phi:\scrF_0\to\RR$ is identified to the subspace of
isotropic maps in $\scrM$ modulo the action of $\vecV$ by translations via
the correspondence $d:\scrM/\vecV\to\scrF_0$.

We prove various formulas about the differential and the gradient of the
functional $\phi$ on $\scrF$: 
\begin{prop}
	\label{prop:gradient}
	For every $F,\dot F\in\scrF$, we have
	$$D\phi|_F\cdot \dot F=-\ipp{\mu(F),g(\scrR F,\dot F)},$$
	$$
	D\phi|_F\cdot F= 4\phi(F),
	$$
	and the gradient of the functional $\phi:\scrF\to \RR$ 
	is given by the formula, 
$$
	\nabla\phi(F)=-\mu(F)\scrR F.
$$
or 
	\begin{equation}
\nabla \phi  = -\cJ Z,
	\end{equation}
where $Z$ is the vector field on $\scrF$ defined by
$$
	Z(F) = X_{\mu(F)}(F).
	$$
\end{prop}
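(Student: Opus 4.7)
The proof is essentially a chain of short computations that chain together Lemma~\ref{lemma:mm} (the pointwise formulas for $\mu$) and Lemma~\ref{lemma:dmu} (the formula for $D\mu$). The strategy is: compute $D\phi$ by the chain rule applied to $\phi = \frac12\|\mu\|_{L^2}^2$, then specialize and rewrite to obtain the gradient and its identification with $-\cJ Z$. No genuine obstacle should arise; the only point requiring a bit of care is the identification $\nabla\phi = -\cJ Z$, where one must correctly commute $i$, $\cJ$ and $\scrR$.

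For the first identity, I would write $\phi(F) = \frac12 \ipp{\mu(F),\mu(F)}$ and apply the chain rule, getting $D\phi|_F\cdot \dot F = \ipp{\mu(F),D\mu|_F\cdot\dot F}$. Substituting $D\mu|_F\cdot\dot F = -g(\scrR F,\dot F)$ from Lemma~\ref{lemma:dmu} yields the formula $D\phi|_F\cdot\dot F = -\ipp{\mu(F),g(\scrR F,\dot F)}$. For the second identity, I would specialize the first to $\dot F = F$. The formula from Lemma~\ref{lemma:mm} gives $g(\scrR F,F) = -2\mu(F)$ pointwise (using selfadjointness of $\scrR$ with respect to $g$), so $D\phi|_F\cdot F = -\ipp{\mu(F),-2\mu(F)} = 2\|\mu(F)\|_{L^2}^2 = 4\phi(F)$.

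For the gradient formula, the key observation is that $\mu(F)$ is a \emph{real-valued} function on $\Sigma$, so it can be moved inside the pointwise inner product:
\begin{equation*}
\ipp{\mu(F), g(\scrR F,\dot F)} = \int_\Sigma \mu(F)\,g(\scrR F,\dot F)\,\omega_\Sigma = \int_\Sigma g(\mu(F)\scrR F,\dot F)\,\omega_\Sigma = \ipp{\mu(F)\scrR F,\dot F}.
\end{equation*}
Comparing with the first identity and the definition of the gradient by $\ipp{\nabla\phi(F),\dot F} = D\phi|_F\cdot\dot F$, we read off $\nabla\phi(F) = -\mu(F)\scrR F$.

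Finally, to prove $\nabla\phi = -\cJ Z$, I note that $\mu(F)\in\LieTT$ is real, so Formula~\eqref{eq:infcxactionreal} gives $Z(F) = X_{\mu(F)}(F) = i\mu(F)F$. Since $i$ and $\cJ$ are commuting linear operators on $\scrF$, we have $\cJ Z(F) = i\mu(F)\cJ F$. Using the definition $\scrR = i\cJ$, which is equivalent to $\cJ = -i\scrR$, this simplifies to $\cJ Z(F) = i\mu(F)(-i\scrR F) = \mu(F)\scrR F$, hence $-\cJ Z(F) = -\mu(F)\scrR F = \nabla\phi(F)$, completing the proof.
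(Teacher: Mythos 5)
Your proposal is correct and follows essentially the same route as the paper: chain rule plus Lemma~\ref{lemma:dmu} for the first identity, specialization to $\dot F=F$ using $g(\scrR F,F)=-2\mu(F)$ for the second, moving the real function $\mu(F)$ inside the pointwise inner product to read off $\nabla\phi(F)=-\mu(F)\scrR F$, and the commutation of $i$, $\cJ$, $\scrR$ for the last identity. The only (immaterial) difference is that you compute $\cJ Z$ directly whereas the paper computes $\cJ\nabla\phi=Z$ and applies $\cJ^2=-1$.
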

\begin{proof}
	By definition of $\phi(F)=\frac 12\|\mu(F)\|^2$ hence 
	$$
	\left . D\phi\right |_F \cdot \dot F=  \ipp{\mu(F),D\mu|_F
	\cdot \dot
	F  }.
	$$
	By Lemma~\ref{lemma:dmu}, we have
	$$
	\left . D\phi\right |_F \cdot \dot F= - \ipp{ \mu(F), g(\scrR
	F,\dot F) }.
	$$
	In particular, for $\dot F=F$, we have
	$	D\phi|F\cdot F = -\ipp{\mu(F),g(\scrR F,F)} =
	2\ipp{\mu(F),\mu(F)}=2\|\mu(F)\|^2=4\phi(F)$.

	Now 
	\begin{align*}
		D\phi|_F\cdot \dot F&=	\ipp{\mu(F),g(\scrR F,\dot F)} \\
		&= \int _ \Sigma \mu(F)g(\scrR F,\dot
	F) \omega_\Sigma \\
		&= \int _ \Sigma g(\mu(F)\scrR F,\dot
	F) \omega_\Sigma \\
		&=  \ipp{\mu(F)\scrR F , \dot F}
	\end{align*}
	and we deduce that 
	$$	\nabla \phi (F)= -\mu(F)\scrR F.$$

	In particular $\cJ\nabla\phi(F)= -\mu(F)\cJ\scrR F = \mu(F)i F =
	X_{\mu(F)}(F)$ and we conclude that
$$
\nabla\phi =- \cJ Z
$$
	where $Z$ is the vector field on $\scrF$ defined by
	$Z(F)=X_{\mu(F)}(F)$.
\end{proof}
\begin{cor}
	\label{cor:crit}
	The following properties are equivalent for $F\in\scrF_0$ 
	\begin{enumerate}
		\item $F$ is a zero of $\phi:\scrF_0\to\RR$. 
		\item $F$ is a zero of $\mu$ in $\scrF_0$.
		\item $F$ is isotropic.
		\item $F$ is a critical point of $\phi:\scrF_0\to\RR$.
	\end{enumerate}
\end{cor}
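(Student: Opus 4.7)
The plan is to chain the four equivalences through very short arguments, using the identities of Proposition~\ref{prop:gradient} and the already established Corollary~\ref{lemma:vanishmm}.

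First, the equivalence $(1) \Leftrightarrow (2)$ is immediate from the definition $\phi(F) = \tfrac12 \|\mu(F)\|_{L^2}^2$: the $L^2$-norm is positive definite on $\LieTT = C^\infty(\Sigma,\RR)$, so $\phi(F) = 0$ if and only if $\mu(F) = 0$ pointwise. Next, $(2) \Leftrightarrow (3)$ is precisely the content of Corollary~\ref{lemma:vanishmm} (applied to $F\in\scrF_0\subset\scrF$, since the isotropy condition $F^*\omega_V = 0$ does not use $F$ being exact).

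The implication $(1) \Rightarrow (4)$ is trivial: $\phi \geq 0$, so any zero is a global minimum of $\phi:\scrF_0\to\RR$ and therefore a critical point.

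The one direction that requires the computations from Proposition~\ref{prop:gradient} is $(4) \Rightarrow (1)$. Here I would use the scaling identity
\[
D\phi|_F \cdot F = 4\phi(F)
\]
established in that proposition. The key observation is that $\scrF_0\subset\scrF$ is a linear subspace, so its tangent space at any point $F\in\scrF_0$ is canonically identified with $\scrF_0$ itself; in particular, $F\in\scrF_0$ is itself an admissible tangent variation. If $F$ is a critical point of the restricted functional $\phi:\scrF_0\to\RR$, then $D\phi|_F$ vanishes on all of $\scrF_0$, and applying it to $\dot F = F$ yields $4\phi(F) = 0$, hence $\phi(F) = 0$. This closes the cycle.

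There is no real obstacle here: the entire argument is a formal consequence of the formulas already assembled. The only subtle point, worth spelling out in the writeup, is that the Euler-type identity $D\phi|_F\cdot F = 4\phi(F)$ reflects the fact that $\phi$ is homogeneous of degree $4$ in $F$ (since $\mu$ is quadratic in $F$), and it is this homogeneity that forces every critical point on the linear subspace $\scrF_0$ to be a zero.
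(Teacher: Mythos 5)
Your proposal is correct and follows essentially the same route as the paper: the equivalences $(1)\Leftrightarrow(2)\Leftrightarrow(3)$ are read off from the definition of $\phi$ and the characterization of isotropic forms via $\mu$, and the decisive step $(4)\Rightarrow(1)$ is exactly the paper's argument, applying the Euler-type identity $D\phi|_F\cdot F=4\phi(F)$ of Proposition~\ref{prop:gradient} to the admissible variation $\dot F=F$ in the linear subspace $\scrF_0$. The only cosmetic difference is that you reach $(4)$ from $(1)$ by the global-minimum observation, whereas the paper passes through $(2)$ and the vanishing of the full differential $D\phi|_F$; both are immediate.
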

\begin{proof}
	We know that $(2)\Leftrightarrow (3)$ by Corollary~\ref{cor:Atech} and
	we prove that $(1)\Rightarrow(2)\Rightarrow(4)\Rightarrow
(1)$.
If $F$ is a zero of $\phi:\scrF_0\to\RR$, it is a zero of $\mu$, by
	definition of $\phi$. If $\mu(F)=0$, then $D\phi|_F=0$ by
	Proposition~\ref{prop:gradient}, hence $F$ is a critical point of
	$\phi:\scrF\to\RR$. This implies that $F$ is also a
	critical point of the restricted functional $\phi:\scrF_0\to\RR$.
	If $F$ is a critical point of the restriction, then $D\phi|_F\cdot
	F= 0$. By Proposition~\ref{prop:gradient}, we deduce that
	 $0=D\phi|_F\cdot F=4\phi(F)$ and 
	we conclude that $\phi(F)=0$.
\end{proof}

\section{The modified moment map flow}
\label{sec:smoothflow}
The subspace of exact differentials $\scrF_0\subset\scrF$ 
is not stable under the $\TT$-action. 
Hence, the gradient $\nabla\phi$ is generally not tangent to
	$\scrF_0$.
The restriction of the functional $\phi:\scrF_0\to \RR$
admits a gradient vector field  $\nabla^\circ \phi$  related to
$\nabla\phi$ by the formula
$$
\nabla^o\phi(F) = \Pi (\nabla \phi(F)).
$$
We define the \emph{modified moment map flow} by the evolution equation
\begin{equation}
	\label{eq:mmmf}
	\frac{\del F}{\del t} =-\nabla^o\phi(F)
\end{equation}
along $\scrF_0$ and, more generally, along the Hölder completion
$\scrF_0^{k,\nu}$.

\subsection{Basic properties of the flow}
An interesting feature of the flow is the following decay
property:
\begin{theo}
	\label{theo:flownorm}
	Let $F_t $ be a solution of the modified moment map
	flow, for $t$ in some interval of $I$. Then 
	\begin{equation}
		\label{eq:flownorm}
		\frac \del{\del t} \|F_t\|^2_{L^2} =-8\phi(F_t). 
	\end{equation}
	In particular $t\mapsto \|F_t\|_{L^2}$ is non increasing along $I$.
\end{theo}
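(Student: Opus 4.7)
The plan is to differentiate $\|F_t\|^2_{L^2}$ directly using the flow equation and then exploit the fact that $F_t$ remains in the subspace $\scrF_0$ of exact forms, together with the scaling identity $D\phi|_F\cdot F=4\phi(F)$ from Proposition~\ref{prop:gradient}.

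First I would compute
\[
\frac{\del}{\del t}\|F_t\|^2_{L^2} = 2\ipp{F_t, \tfrac{\del F_t}{\del t}} = -2\ipp{F_t, \nabla^\circ\phi(F_t)} = -2\ipp{F_t, \Pi(\nabla\phi(F_t))},
\]
using bilinearity of $\cG$ and the defining evolution equation~\eqref{eq:mmmf}.

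Next, I would use that the flow preserves the subspace $\scrF_0$, so that $F_t\in\scrF_0$ and hence $\Pi(F_t)=F_t$. Since $\Pi$ is a $\cG$-orthogonal projection, it is self-adjoint, giving
\[
\ipp{F_t, \Pi(\nabla\phi(F_t))} = \ipp{\Pi(F_t), \nabla\phi(F_t)} = \ipp{F_t, \nabla\phi(F_t)} = D\phi|_{F_t}\cdot F_t.
\]
By the scaling identity from Proposition~\ref{prop:gradient}, the right-hand side equals $4\phi(F_t)$. Putting the pieces together gives exactly $\frac{\del}{\del t}\|F_t\|^2_{L^2}=-8\phi(F_t)$, and since $\phi\geq 0$ the monotonicity statement follows.

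The argument is essentially a one-line computation once the right facts are combined, so there is no real obstacle. The only subtle point worth emphasizing is that replacing $\nabla^\circ\phi$ by $\nabla\phi$ in the inner product is legitimate precisely because $F_t\in\scrF_0$; for a general vector field on $\scrF$ the scaling identity $D\phi|_F\cdot F=4\phi(F)$ would not produce the decay, and this is exactly where the modification of the gradient by the Hodge projector $\Pi$ pays off.
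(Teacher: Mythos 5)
Your proof is correct and follows essentially the same route as the paper: differentiate the $L^2$-norm along the flow, use that $F_t\in\scrF_0$ together with the orthogonality of the Hodge projector $\Pi$ to replace $\nabla^\circ\phi$ by $\nabla\phi$ in the pairing, and conclude with the scaling identity $D\phi|_F\cdot F=4\phi(F)$ of Proposition~\ref{prop:gradient}. The factor $-8$ comes out exactly as in the paper's computation.
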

\begin{proof}
We have 
	\begin{align*}
		\frac 12	
		\frac \del{\del t} \|F_t\|_{L^2}^2 
		&= \ipp{\frac{\del F_t}{\del
		t},F_t }\\
		&=
	-\ipp{\nabla^o\phi(F_t),F_t}\\
		&= -\ipp{\nabla\phi(F_t),F_t}\\
		&= -4\phi(F_t).
	\end{align*}
\end{proof}
Another important essential property is that the stationary points of the modified
moment map flow are exactly the zeroes of $\phi$.
\begin{prop}
	\label{prop:fps}
	The fixed point set of the modified moment map flow  agrees with the
	vanishing set of $\phi:\scrF_0\to\RR$.
\end{prop}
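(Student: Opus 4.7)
The proposition is essentially a direct consequence of Corollary~\ref{cor:crit} together with the definition of the flow, so my plan is to spell out the equivalence chain rather than to invoke any new machinery.

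First I would unpack the definition of a fixed point. By the evolution equation~\eqref{eq:mmmf}, a point $F\in\scrF_0$ is a fixed point of the modified moment map flow if and only if $\nabla^o\phi(F)=0$. Since $\nabla^o\phi$ is by construction the gradient of the restricted functional $\phi:\scrF_0\to\RR$ with respect to the $L^2$-metric $\cG$ (obtained from $\nabla\phi$ via the orthogonal projection $\Pi$ onto $\scrF_0$), the vanishing $\nabla^o\phi(F)=0$ is equivalent to $F$ being a critical point of $\phi:\scrF_0\to\RR$.

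Next, I would invoke Corollary~\ref{cor:crit}, which asserts the equivalence of being a critical point of the restricted functional and being a zero of the restricted functional. This immediately gives that the fixed point set of the flow coincides with the vanishing locus of $\phi:\scrF_0\to\RR$, as claimed.

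There is no real obstacle here; the only point worth double checking is that one is careful to use the restricted gradient $\nabla^o\phi$ rather than the ambient gradient $\nabla\phi$, because $\scrF_0$ is not $\TT$-invariant and $\nabla\phi$ is not tangent to $\scrF_0$ in general. As a sanity check, one can also give a direct argument using Theorem~\ref{theo:flownorm}: if $F$ is stationary then $\frac{\del}{\del t}\|F_t\|_{L^2}^2=0$, and Formula~\eqref{eq:flownorm} forces $\phi(F)=0$; conversely a zero of $\phi$ is a critical point of the restriction by Corollary~\ref{cor:crit} and hence stationary. Either path yields the statement in a few lines.
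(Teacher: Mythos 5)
Your proof is correct and is essentially the paper's own argument: the paper proves Proposition~\ref{prop:fps} by citing Corollary~\ref{cor:crit}, and you simply spell out the intermediate equivalence (fixed point $\Leftrightarrow$ $\nabla^\circ\phi(F)=0$ $\Leftrightarrow$ critical point of the restriction) before invoking it. The additional sanity check via Theorem~\ref{theo:flownorm} is also valid but not needed.
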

\begin{proof}
	This is an immediate consequence of Corollary~\ref{cor:crit}.
\end{proof}
The modified moment map flow has the short time existence property
according to the following theorem:
\begin{theo}
	\label{theo:lip}
	For every $k\geq 0$, 
	$\nu\in(0,1)$ and $F\in \scrF_0^{k,\nu}$, there exists
	$\epsilon >0$ and a unique  
	solution of the modified moment map  flow $F_t \in\scrF_0^{k,\nu}$,
	defined for  $t\in
	[-\epsilon,\epsilon]$ such that  $F_0=F$.
\end{theo}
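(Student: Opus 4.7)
The plan is to recast equation~\eqref{eq:mmmf} as an ordinary differential equation in the Banach space $\scrF_0^{k,\nu}$ and apply the Cauchy--Lipschitz theorem. By Proposition~\ref{prop:gradient} we have $\nabla\phi(F)=-\mu(F)\,\scrR F$, so the right-hand side of the flow can be rewritten as
$$
V(F):=-\nabla^\circ\phi(F)=\Pi\bigl(\mu(F)\,\scrR F\bigr).
$$
It will therefore be enough to prove that $V:\scrF_0^{k,\nu}\to\scrF_0^{k,\nu}$ is locally Lipschitz; in fact I expect to show directly that it is a continuous polynomial map of degree three between Banach spaces, which is stronger than needed.

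The verification proceeds in three routine steps. First, $\scrR$ acts pointwise and is a fiberwise isometry, so it defines a bounded operator on $\scrF^{k,\nu}$. Second, Hölder spaces on a closed Riemannian manifold form a Banach algebra: for sections of any Euclidean tensor bundle over $\Sigma$, pointwise multiplication satisfies
$$
\|g(F_1,F_2)\|_{{k,\nu}}\leq C\,\|F_1\|_{{k,\nu}}\|F_2\|_{{k,\nu}},\qquad \|\lambda F\|_{{k,\nu}}\leq C\,\|\lambda\|_{{k,\nu}}\|F\|_{{k,\nu}},
$$
a standard consequence of the Leibniz rule combined with the usual Hölder inequality for difference quotients. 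Applying these estimates to $\mu(F)=-\tfrac12 g(F,\scrR F)$ shows that $F\mapsto \mu(F)$ is a continuous homogeneous quadratic map $\scrF^{k,\nu}\to C^{k,\nu}(\Sigma,\RR)$, and then that $F\mapsto \mu(F)\,\scrR F$ is a continuous cubic map $\scrF^{k,\nu}\to\scrF^{k,\nu}$. Third, composing with the bounded linear projector $\Pi:\scrF^{k,\nu}\to\scrF_0^{k,\nu}$ supplied by Proposition~\ref{prop:hodge} yields that $V$ is polynomial, hence smooth and in particular locally Lipschitz.

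Once local Lipschitz continuity is established, the Cauchy--Lipschitz theorem in Banach spaces provides, for each $F\in\scrF_0^{k,\nu}$, some $\epsilon>0$ and a unique $C^1$ curve $F_t\in\scrF_0^{k,\nu}$ with $F_0=F$ solving~\eqref{eq:mmmf} on $[-\epsilon,\epsilon]$. Since $V$ takes values in $\scrF_0^{k,\nu}$ by construction, the subspace of exact forms is preserved automatically and no additional argument is needed. The only genuinely analytic input is the Hölder continuity of the Hodge projector, which is precisely Proposition~\ref{prop:hodge}; the remaining Banach algebra estimates are standard. Consequently I do not expect any serious obstacle here---the point of the theorem is exactly that, thanks to the regularity gain in $\Pi$, the cubic nonlinearity of $V$ does not cost derivatives, and each Hölder scale $\scrF_0^{k,\nu}$ is invariant under the flow.
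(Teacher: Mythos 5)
Your proof is correct and follows essentially the same route as the paper: express $\nabla^\circ\phi(F)=\Pi(\nabla\phi(F))$ with $\nabla\phi(F)=-\mu(F)\,\scrR F$ a cubic (hence locally Lipschitz) map on $\scrF^{k,\nu}$, invoke Proposition~\ref{prop:hodge} for the boundedness of $\Pi$, and conclude by Cauchy--Lipschitz in the Banach space $\scrF_0^{k,\nu}$. The Banach-algebra estimates you spell out are exactly the details the paper leaves implicit.
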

\begin{proof}
	The map $F\mapsto \nabla\phi(F)$ is polynomial of order $3$ in the
	coefficients of $F$ by Proposition~\ref{prop:gradient}. 
	In particular, it is locally Lipschitz with respect to
	the $C^{k,\nu}$-norm.
	By Proposition~\ref{prop:hodge}, the Hodge projector
	$\Pi :\scrF^{k,\nu}\to\scrF_0^{k,\nu}$
	 is
	continuous for the $C^{k,\nu}$-norm. Therefore
	the maps $F\mapsto \nabla^\circ\phi(F)=\Pi\nabla\phi(F)$ is locally Lipschitz on
	$\scrF_0^{k,\nu}$. The
	Cauchy-Lipschitz theorem applies and the theorem is proved.
\end{proof}
\begin{proof}[Proof of Theorem~\ref{theo:smoothflow}]
	The result is just a restatement of Theorem~\ref{theo:lip},
	Proposition~\ref{prop:fps} and
	Theorem~\ref{theo:flownorm}.
\end{proof}

\subsection{Energy as a Morse-Bott function}
\label{sec:mb}
By Proposition~\ref{prop:gradient}, we have
$$
D\phi|_F\cdot \dot F_1=
-\ipp{\mu(F),g(\scrR F,\dot F_1)}
$$
 for every $F, \dot F_1 \in\scrF$ and
 second variation of $\phi$ with respect to some $\dot
F_2\in\scrF$ is given by
$$
D^2\phi|_F\cdot (\dot F_1,\dot F_2)= \ipp{D\mu|_F\cdot \dot
F_1,D\mu|_F\cdot \dot F_2} - \ipp{\mu(F), g(\scrR \dot F_2,\dot F_1)}.
$$
In particular, if $\mu(F)=0$, we have
\begin{equation}
	\label{eq:hesssmooth}
D^2\phi|_F\cdot (\dot F_1,\dot F_2)= \ipp{D\mu|_F\cdot \dot
F_1,D\mu|_F\cdot \dot F_2} 
\end{equation}
which proves the following lemma:
\begin{lemma}Let $F\in\scrF$ be an isotropic differential. Then,  $D^2\phi|_F$ is a non negative bilinear
	form. Furthermore, the kernel of $D^2\phi|_F$ is given 
	by the  kernel of the linear map $\ker D\phi|_F$.
\end{lemma}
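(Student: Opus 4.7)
The plan is simply to exploit the identity~\eqref{eq:hesssmooth}, which is already recorded just above the lemma. Once one has the expression
\[
D^2\phi|_F\cdot (\dot F_1,\dot F_2) = \ipp{D\mu|_F\cdot \dot F_1,\, D\mu|_F\cdot \dot F_2}
\]
at an isotropic point $F$, both statements become the standard algebraic fact that a Gram-type bilinear form $(v,w)\mapsto \ipp{Lv,Lw}$ built from a linear map $L$ is non-negative and has kernel exactly $\ker L$.

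For the non-negativity I would set $\dot F_1 = \dot F_2 = \dot F$ in~\eqref{eq:hesssmooth}, obtaining
\[
D^2\phi|_F(\dot F,\dot F) = \|D\mu|_F\cdot \dot F\|_{L^2}^2 \geq 0,
\]
which is the first assertion.

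For the kernel identification I would argue by double inclusion. If $\dot F_1 \in \ker D\mu|_F$, then $D^2\phi|_F(\dot F_1,\dot F_2) = 0$ for every $\dot F_2 \in \scrF$ directly from~\eqref{eq:hesssmooth}, so $\dot F_1$ lies in the kernel of the bilinear form $D^2\phi|_F$. Conversely, if $\dot F_1$ lies in the kernel of $D^2\phi|_F$, specializing to $\dot F_2 = \dot F_1$ yields $\|D\mu|_F\cdot \dot F_1\|_{L^2}^2 = 0$; by the positive definiteness of the $L^2$-inner product $\cG$ on $\LieTT$, this forces $D\mu|_F\cdot \dot F_1 = 0$, i.e.\ $\dot F_1 \in \ker D\mu|_F$.

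I do not expect any real obstacle: the only substantive step is the vanishing of the term $\ipp{\mu(F),\, g(\scrR \dot F_2,\dot F_1)}$ from the general Hessian formula when $\mu(F)=0$, and this has already been carried out in deriving~\eqref{eq:hesssmooth}. The remainder is linear algebra, and the proof should fit in a few lines.
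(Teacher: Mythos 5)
Your proposal is correct and follows exactly the paper's route: the paper derives the identity~\eqref{eq:hesssmooth} at an isotropic point and regards the lemma as an immediate consequence, which is precisely the Gram-form argument you spell out. Note only that you (rightly) read the kernel statement as $\ker D\mu|_F$ rather than the literal $\ker D\phi|_F$, which is evidently a typo in the lemma since $D\phi|_F$ vanishes identically at a zero of $\mu$.
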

According to the above lemma, the functional $\phi:\scrF_0\to\RR$ seems to behave like a
Morse-Bott function. The rest of this section is devoted to  proving that
this is indeed the case,  in a neighborhood
of a monomorphism.

Let $s_0$ be the $0$-section of the bundle $T^* \Sigma\to \Sigma$.
The  map 
$$
\Phi:\Diff_{k,\nu}(\Sigma)\times \Omega^1_{k,\nu}(\Sigma)\to C^{k,\nu}(\Sigma,T^*\Sigma)
$$
defined by $\Phi(\varphi,s)= s \circ \varphi$ for some $k\geq 1$ 
is smooth. Furthermore $\Phi$ defines a local diffeomorphism between a neighborhood of
$(\id,s_0)$ and a neighborhood of $s_0$ in $C^{k,\nu}(\Sigma, T^*\Sigma)$.

It is a well known fact that $T^*\Sigma$ is endowed with a canonical
symplectic form $\omega_{T^*}=d\Lambda$, where $\Lambda$ is the \emph{Liouville
form} on $T^*\Sigma$.
Furthermore, a map $h:\Sigma\to T^*\Sigma$,
sufficiently close to $s_0$ in $C^1$-norm is isotropic if, and only if,
$h=\Phi(\phi,s)$ for some closed differential $1$-form $s$
\cite{MDS}.

It follows that the subspace of isotropic maps is a submanifold of
$C^{k,\nu}(\Sigma, T^*\Sigma)$ in a neighborhood of $s_0$. Furthermore, the
subspace of isotropic maps is locally
diffeomorphic via $\Psi$ to  the submanifold 
 $\Diff_{k,\nu}(\Sigma)\times \Omega^{1,c}_{k,\nu}(\Sigma)$ in a neighborhood of
 $(\id,s_0)$, where $\Omega^{1,c}_{k,\nu}(\Sigma)$  is the subspace of closed forms.

We now specialize to the particular case where $V$ has real dimension $4$.
Let $f:\Sigma\looparrowright V$ be an isotropic immersion. Then, the
image $M=f(\Sigma)$ is known  an \emph{immersed Lagrangian
surface} of $V$.
By the Lagrangian neighborhood theorem~\cite{MDS}, there exists a neighborhood $\scrU$ of
the zero section $s_0$ in $T^*\Sigma$ and a neighborhood $\scrV$ of $M$ in $V$, together with a smooth
immersion $\theta:\scrU\to \scrV$, such that $\theta\circ s_0= f$ and
$\theta^*\omega_V=\omega_{T^*}$.

Every map $\tilde f:\Sigma\to
V$, sufficiently
close to $f$ $C^1$-norm, is given by a map $h:\Sigma\to T^*\Sigma$ 
sufficiently close to $s_0$, such that $\tilde f = \theta\circ h$.
Furthemore, $\tilde f$ is isotropic if, and only if, $h$ is isotropic since
$\theta$ preserves the symplectic forms.
We put $\Psi(\varphi,s)=\theta \circ s \circ\varphi$ and deduce the following result:
\begin{theo}
	We assume that $\dim_\RR V=4$ and let $f:\Sigma\looparrowright V$
	be an isotropic immersion with Hölder
	regularity $C^{k,\nu}$, for some $k\geq 1$.
There exists  smooth map
$$\Psi:\Diff_{k,\nu}(\Sigma)\times \Omega^1_{k,\nu}(\Sigma)\to
	C^{k,\nu}(\Sigma,V)$$
	defined in a neighborhood of $\Diff_{k,\nu}(\Sigma)\times\{s_0\}$,
	such that 
	\begin{enumerate}
		\item $\Psi(\id,s_0)=f$
		\item $\Psi$ is $\Diff_{k,\nu}(\Sigma)$-equivariant
		\item $\Psi$ is a local diffeomorphism in a neighborhood of
			$(\id,s_0)$
		\item The map $\Psi$ restrict as a local diffeomorphism at
			$(\id,s_0)$ between  $\Diff_{k,\nu}(\Sigma)\times
			\Omega^{1,c}_{k,\nu}(\Sigma)$  the isotropic
			deformations of $f$.
	\end{enumerate}
	In particular, the subspace of isotropic deformations in a
	neighborhood  is a
	submanifold of $C^{k,\nu}(\Sigma)$ in a sufficiently small open neighborhood of $f$.
\end{theo}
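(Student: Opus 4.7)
The plan is to construct $\Psi$ as the composition of the reparametrization map $\Phi(\varphi,s)=s\circ\varphi$ with the Lagrangian neighborhood immersion $\theta$ already produced in the excerpt. More precisely, I would set
$$
\Psi(\varphi,s) \;=\; \theta\circ s\circ\varphi,
$$
defined on a neighborhood of $\Diff_{k,\nu}(\Sigma)\times\{s_0\}$ on which $s\circ\varphi$ takes values in $\scrU\subset T^*\Sigma$.

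First I would verify (1) and (2) by direct computation: $\Psi(\id,s_0)=\theta\circ s_0=f$ by the defining property of $\theta$, and $\Psi(\varphi\circ\psi,s)=\theta\circ s\circ\varphi\circ\psi=\Psi(\varphi,s)\circ\psi$, which is the required $\Diff_{k,\nu}(\Sigma)$-equivariance. For (3), I would combine two local diffeomorphism statements. The excerpt already observes that $\Phi:\Diff_{k,\nu}(\Sigma)\times\Omega^1_{k,\nu}(\Sigma)\to C^{k,\nu}(\Sigma,T^*\Sigma)$ is a local diffeomorphism between a neighborhood of $(\id,s_0)$ and a neighborhood of $s_0$. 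Since $\theta$ is a smooth immersion that is a local diffeomorphism onto its image near every point of the zero section (the fact that $f$ is an immersion, not an embedding, does not affect this, because $\Sigma$ is the fixed source and any $h$ sufficiently $C^0$-close to $s_0$ lifts uniquely via $\theta$), post-composition with $\theta$ is a local diffeomorphism $C^{k,\nu}(\Sigma,T^*\Sigma)\to C^{k,\nu}(\Sigma,V)$ between appropriate neighborhoods. Composing yields (3).

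For (4), the key observation is that $\theta^*\omega_V=\omega_{T^*}$, so a map $h:\Sigma\to\scrU\subset T^*\Sigma$ is isotropic with respect to $\omega_{T^*}$ if and only if $\theta\circ h$ is isotropic in $V$. Combined with the fact stated in the excerpt, that a map $h$ sufficiently $C^1$-close to $s_0$ is isotropic if and only if $h=s\circ\varphi$ for some closed $1$-form $s$ and diffeomorphism $\varphi$, this shows that $\Psi(\varphi,s)$ is isotropic precisely when $s\in\Omega^{1,c}_{k,\nu}(\Sigma)$. Hence $\Psi$ restricts to a local diffeomorphism between $\Diff_{k,\nu}(\Sigma)\times\Omega^{1,c}_{k,\nu}(\Sigma)$ and the space of isotropic deformations of $f$. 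Since $\Omega^{1,c}_{k,\nu}(\Sigma)$ is a closed linear subspace of $\Omega^1_{k,\nu}(\Sigma)$, the submanifold claim follows at once from (3).

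The main technical obstacle is the joint-regularity analysis needed to make $\Phi$ and its post-composition with $\theta$ genuine smooth maps of Hölder Banach manifolds: one must check that $(\varphi,s)\mapsto s\circ\varphi$ and $h\mapsto \theta\circ h$ are smooth on $C^{k,\nu}$ spaces with $k\geq 1$ (the loss-of-derivative phenomenon for composition requires some care), and that their linearizations at $(\id,s_0)$ and at $s_0$ respectively are invertible, so that the inverse function theorem applies. Once this analytic scaffolding is in place, (1)-(4) and the submanifold conclusion are essentially formal consequences of the Lagrangian neighborhood theorem together with the $\omega_V$-preserving nature of $\theta$.
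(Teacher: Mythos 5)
Your proposal is correct and follows essentially the same route as the paper: define $\Psi(\varphi,s)=\theta\circ s\circ\varphi$ using the Lagrangian neighborhood immersion $\theta$, combine the local diffeomorphism property of the reparametrization map $\Phi(\varphi,s)=s\circ\varphi$ with lifting through $\theta$, and use $\theta^*\omega_V=\omega_{T^*}$ together with the Weinstein-type characterization of isotropic maps near the zero section to identify the isotropic deformations with $\Diff_{k,\nu}(\Sigma)\times\Omega^{1,c}_{k,\nu}(\Sigma)$. Your added remarks on the Hölder-space smoothness of composition and the uniqueness of lifts through the immersion $\theta$ only make explicit points the paper leaves implicit.
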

For $k\geq 1$, the differential $d$ induces a diffeomorphism
$$
\scrM^{k,\alpha}\to\scrF^{k-1,\alpha}_0\times V
$$
given by $f\mapsto (df,f(x_0))$, where $x_0\in\Sigma$ is some fixed marked
point.
The above  diffeormorphism restrics to a diffeomorphism between isotropic maps and
istropic differentials and we deduce the following corollary:
\begin{cor}
	\label{cor:mb}
	Assuming $\dim_\RR V=4$ and $k\geq 0$, let $F\in\scrF_0^{k,\nu}$ be an isotropic
	monomorphism, for some $k\geq 0$.
	Then the subspace of isotropic differentials is a submanifold of
	$\scrF_0^{k,\nu}$ in a neighborhood of
	$F$.
Furthemore, the Hessian of $\phi$ is positive in transverse direction to
	the submanifold of isotropic differential near $F$.
\end{cor}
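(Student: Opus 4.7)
The plan is to reduce to the preceding theorem via integration, transport the resulting submanifold structure through the differential diffeomorphism $d$, and then extract the Hessian positivity from formula~\eqref{eq:hesssmooth}. First, since $F \in \scrF_0^{k,\nu}$ is exact, I would integrate to write $F = df$ for a map $f \in \scrM^{k+1,\nu}$, unique up to translation by an element of $\vecV$. The monomorphism assumption on $F = df$ forces $f$ to be an immersion of class $C^{k+1,\nu}$, and isotropy of $F$ gives $f^* \omega_V = 0$, so that $f$ is an isotropic immersion. The preceding theorem therefore applies to $f$ with regularity index $k+1$.

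That theorem supplies a local diffeomorphism
\[
\Psi: \Diff_{k+1,\nu}(\Sigma) \times \Omega^1_{k+1,\nu}(\Sigma) \to \scrM^{k+1,\nu}
\]
near $(\id, s_0)$, restricting to a parametrization of the nearby isotropic maps by the closed-form slice $\Diff_{k+1,\nu}(\Sigma) \times \Omega^{1,c}_{k+1,\nu}(\Sigma)$. Composing with the diffeomorphism $\scrM^{k+1,\nu} \to \scrF_0^{k,\nu} \times V$, $g \mapsto (dg, g(x_0))$, and projecting to the first factor, I obtain a smooth local parametrization of isotropic differentials near $F$, exhibiting them as a submanifold of $\scrF_0^{k,\nu}$ in a neighborhood of $F$. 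For the Hessian statement, formula~\eqref{eq:hesssmooth} yields at the critical point $F$
\[
D^2\phi|_F(\dot F_1, \dot F_2) = \ipp{D\mu|_F \cdot \dot F_1,\, D\mu|_F \cdot \dot F_2}.
\]
This form is manifestly non-negative, and its kernel inside $\scrF_0^{k,\nu}$ is exactly $\ker(D\mu|_F) \cap \scrF_0^{k,\nu}$, so positivity on transverse directions reduces to identifying the tangent space of the isotropic submanifold at $F$ with this kernel.

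The main obstacle I anticipate is precisely this last identification. One inclusion is automatic since $\mu$ vanishes on the submanifold. For the reverse inclusion, one must show that the restriction of $\mu$ to $\scrF_0^{k,\nu}$ is a submersion at $F$, so that $\mu^{-1}(0) \cap \scrF_0^{k,\nu}$ has the expected codimension and its tangent space exhausts the linearized kernel. Since $\scrF_0$ is not $\TT^\CC$-invariant, one cannot appeal directly to moment map reduction; the verification must instead go through the Lagrangian neighborhood model of the preceding theorem, comparing the linearization of $\Psi$ on the closed-form slice with the zero set of $D\mu|_F$. The assumption $\dim_\RR V = 4$ enters essentially here, since only in that case does the Lagrangian neighborhood theorem produce a parametrization of codimension matching that of $\mu^{-1}(0)$ inside $\scrF_0^{k,\nu}$.
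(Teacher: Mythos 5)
Your route coincides with the paper's: integrate $F=df$ to an isotropic immersion $f$ of class $C^{k+1,\nu}$, apply the Lagrangian neighborhood theorem of \S\ref{sec:mb} to $f$, transport the resulting submanifold of isotropic maps through the diffeomorphism $f\mapsto (df,f(x_0))$, and read the Hessian off Formula~\eqref{eq:hesssmooth}. The submanifold half of the corollary is therefore in order. The problem is that you leave the second half --- positivity of the Hessian transverse to the isotropic locus --- as an ``anticipated obstacle'' rather than an established step: the identification of the tangent space of the isotropic locus at $F$ with $\ker D\mu|_F\cap T_F\scrF_0^{k,\nu}$ is not a technical afterthought, it \emph{is} the transverse-positivity claim, so it cannot be deferred. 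Moreover the one concrete mechanism you propose for it, namely that $\mu$ restricted to $\scrF_0^{k,\nu}$ be a submersion at $F$, fails: since $\omega_V$ is exact on the affine space $V$, one has $\int_\Sigma \mu(df)\,\omega_\Sigma=-\int_\Sigma f^*\omega_V=0$ for every $f$, so $\mu\circ d$ takes values in the functions $L^2$-orthogonal to the constants and is never a submersion onto $\LieTT$ (the paper records the same obstruction for $\mu_N\circ d$ in the polyhedral setting).

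The gap is closed by the other route you gesture at, and it should be carried out explicitly: linearize item (4) of the theorem. The derivative of $\Psi$ at $(\id,s_0)$ applied to a pair (vector field $X$, closed form $\dot s$) produces exactly those infinitesimal deformations $\dot f$ for which the $1$-form $\alpha=\omega_V(\dot f, df\,\cdot)$ on $\Sigma$ is closed: the $\Diff$-direction gives $\dot f=df(X)$, for which $\alpha=-\iota_X f^*\omega_V=0$, and the $\dot s$-direction sweeps out all closed $\alpha$ because each $df(T_x\Sigma)$ is Lagrangian in $\vecV$ --- this is where $\dim_\RR V=4$ really enters, not through a codimension count for a submersion. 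On the other hand, differentiating $\mu(df_t)=-f_t^*\omega_V/\omega_\Sigma$ gives $D\mu|_{df}\cdot d\dot f=-d\alpha/\omega_\Sigma$. Hence the tangent space at $F$ of the isotropic submanifold equals $\ker D\mu|_F\cap T_F\scrF_0^{k,\nu}$, and since by~\eqref{eq:hesssmooth} the kernel of $D^2\phi|_F$ is exactly $\ker D\mu|_F$, the Hessian is positive definite on any complement of that tangent space, which is the asserted transverse positivity.
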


\section{Polyhedral isotropic surfaces}
\label{sec:poly}

In this section, we adapt all the smooth constructions  of \S\ref{sec:dream} and
\S\ref{sec:smoothflow} to the \emph{polyhedral
setting}. 

\subsection{Definition of polyhedral surfaces}
A  \emph{triangulation} of a surface $\Sigma$ is a triple
$\scrT=(\Sigma,\scrK,\ell)$, where:
\begin{enumerate}
\item $\Sigma$ is a topological surface,
\item $\scrK$ is a locally finite \emph{simplicial complex}, contained in some ambient affine
	space $A$ and
\item $\ell:|\scrK|\to\Sigma$ is a homeomorphism, where $|\scrK|$ is the
	topological space associated to the simplicial complex.
\end{enumerate}

Given a triangulation $\scrT$, every simplex $\sigma\in\scrK$ 
defines a subset $\ell(\sigma)\subset \Sigma$, homeomorphic to
$\sigma$. It is often convenient to think of $\sigma$ as a
domain in $\Sigma$, using the homeomorphism $\ell$ and we will take
the liberty to drop
the reference to~$\ell$ in our notations.

Every  simplex $\sigma$ of an affine space~$A$ spans an affine subspace of
$A$, with underlying vector space
denoted $\vec \sigma$, also called the \emph{tangent direction of the
simplex} $\sigma$.

Let $g_\Sigma= (g_\sigma)_{\sigma\in\scrK}$, be a family,
where $g_\sigma$   is a Euclidean inner product on $\vec\sigma$. 
Suppose that  for
every $\sigma_1,\sigma_2\in\scrK$ with
$\sigma_2\subset\sigma_1$, the metric $g_{\sigma_2}$ agrees with the
restriction of $g_{\sigma_1}$ to $\vec\sigma_2$. In such a case, we
 say that $g_\Sigma$
is a \emph{polyhedral metric} on $\Sigma$.

\begin{dfn}
A topological surface $\Sigma$ endowed with a triangulation
$\scrT=(\Sigma,\scrK,\ell)$ and a polyhedral metric $g_\Sigma$
is called a \emph{polyhedral surface}.
\end{dfn}

The polyhedral metric $g_\Sigma$ provides a flat Riemannian metric
 $g_\sigma$ on each simplex $\sigma\in\scrK$. As in the smooth case
 discussed at \S\ref{sec:fiberwise}, the metric $g_\sigma$
induces a fiberwise Euclidean inner product, denoted $g_\sigma$ as well,
on all the \emph{tensor bundles} over
the simplex
$\sigma$. Together with $g_V$, we deduce a Euclidean fiberwise
inner product for the bundles of $\vecV$-valued forms on $\sigma$, also
denoted $g_\sigma$.
For simplicity of notation, these inner products are sometimes denoted
$\ip{\cdot,\cdot}$ and the corresponding norm is denoted $|\cdot|$.

\subsection{Whitney cohomology}
 The theory of Whitney forms and Whitney cohomology~\cite{Whi} is a generalisation, in
 the piecewise linear setting, of  smooth differential forms and de Rham cohomology.
 The spaces of smooth differential forms $\Omega^n(\sigma)$ on a simplex
 $\sigma\in\scrK$ are compatible with pullbacks 
  in the following sense: for every
$\sigma_1,\sigma_2\in\scrK$ with
$\sigma_2\subset \sigma_1$ and $\beta_1\in\Omega^n(\sigma_1)$, the pullback
$\beta_2$ of $\beta_1$ on $\sigma_2$ is a
smooth differential form as well.

We denote by $\Omega^n(\Sigma,\scrT)$ the space of families of differential
form
$\beta=(\beta_\sigma)_{\sigma\in\scrK}$, where each
$\beta_\sigma\in\Omega^n(\Sigma)$.
Given  $\beta=(\beta_\sigma)\in\Omega^n(\Sigma,\scrT)$,
suppose that for every $\sigma_1,\sigma_2\in\scrK$, with
$\sigma_2\subset\sigma_1$, the
pull back of $\beta_{\sigma_1}$ agrees with $\beta_{\sigma_2}$. Then we say that
$\beta$ satifies the \emph{Whitney condition}, or that $\beta$ is  a \emph{Whitney
form}. 
The space of Whitney forms is  denoted $\Omega_w^n(\Sigma, \scrT)$.

There is a natural exterior derivative $d:\Omega^n(\Sigma, \scrT)\to
\Omega^{n+1}(\Sigma,\scrT)$ given by $d\beta =
(d\beta_\sigma)_{\sigma\in\scrK}$. Furthermore, the spaces of Whitney forms are
preserved by $d$ and we obtain the  Whitney complex
$$
d:\Omega^n_w(\Sigma,\scrT) \to\Omega_w^{n+1}(\Sigma,\scrT)
$$
which defines  the Whitney cohomology
denoted 
$H^n_w(\Sigma,\RR,\scrT)$. The constructions readily extend to the case of
$\vecV$-valued differential forms and we obtain the Whitney cohomology
spaces $H^n_w(\Sigma,\vecV,\scrT)$.
Similarly to the de Rham cohomology, the Whitney  cohomology
 agrees with the usual cohomology
spaces
$H^n(\Sigma,\RR)\simeq H^n_w(\Sigma,\RR,\scrT)$ and
$H^n(\Sigma,\vecV)\simeq H^n_w(\Sigma,\vecV,\scrT)$. In particular, the
Whitney cohomology
does not depend on the choice of triangulation $\scrT$.

\subsection{Orientation and Kähler structure}
An \emph{orientation} of the simplicial complex
$\scrK$ of a polyhedral surface $\Sigma$ is also called an orientation of $\Sigma$.
An \emph{oriented polyhedral surface carries} a canonical area form
$\omega_\Sigma=(\omega_\sigma)_{\sigma\in\scrK}\in\Omega^2_w(\Sigma,\scrT)$ 
characterized by 
\begin{enumerate}
	\item $\omega_\sigma$ is an exterior $2$-form on $\vec\sigma$.
	\item For every $\sigma\in\scrK_2$,  the volume form $\omega_\sigma$ is 
		compatible with the orientation of the facet and
		$|\omega_\sigma| =1$, with respect to $g_\sigma$.
		\item If $\sigma\in\scrK$ is an edge or a vertex then
			$\omega_\sigma=0$.
\end{enumerate}

The combination of $\omega_\sigma$ and $g_\sigma$ defines a family 
of almost complex structures 
$J_\Sigma=(J_\sigma)_{\sigma\in\scrK_2}$ on the facets of $\scrK$, where
$J_\sigma:\vec\sigma\to\vec\sigma$ is an almost complex structure on
$\vec\sigma$, with the property that
$\omega_\sigma = g_\sigma(J_\sigma\cdot,\cdot)$. 

In conclusion, an oriented polyhedral surface $\Sigma$ is endowed with
the structures
$(\scrT,g_\Sigma,J_\Sigma,\omega_\Sigma)$, referred to as a \emph{polyhedral
Kähler structure}, a notion introduced by
Dmitri Panov in~\cite{Pan}, where he investigates the $4$ dimensional case.
\begin{rmk}
A polyhedral Kähler surface $\Sigma$ as above admits a smooth structure
	\cite{Pan,Thu,Tro}
such that
\begin{enumerate}
	\item $g_\Sigma$ and $\omega_\Sigma$ are smooth away from the
		vertices of the triangulation.
	\item The metric may have  conical singularities at the vertices.
		\item The almost complex structure $J_\Sigma$ is smooth on
			$\Sigma$.
\end{enumerate}
In presence of conical singularities, the map $\ell:\sigma\to\Sigma$ is not smooth at the
vertices. In particular, the smooth differential forms on $\Sigma$ do not
	necessarily induce smooth families of Whitney forms in
	$\Omega^n_w(\Sigma,\scrT)$.
\end{rmk}

\subsection{Polyhedral maps and differentials}
We continue our constructions and analogies with the smooth setting, 
when $\Sigma$ is a closed oriented
polyhedral surface. 

A map $f:\Sigma\to V$, or more generally a map to some affine space,
is called \emph{polyhedral} with respect to $\scrT$ if:
\begin{enumerate}
	\item the map $f$ is continuous and
	\item the restriction of $f$ to every simplex of the triangulation
		$\scrT$ is an affine map.
\end{enumerate}
The space of polyhedral maps  is denoted 
$$
\scrM(\scrT)=\{f:\Sigma\to V, \mbox{ $f$ is $\scrT$-polyhedral} \}.
$$
The restriction of a polyhedral map $f$ to every simplex
$\sigma\in\scrT$ is affine, hence differentiable. However, $f$ is generally
not differentiable at every point of $\Sigma$. Nevertheless, we can define
the differential $d f$ as a family 
$$
d f =(d f|_\sigma)_{\sigma\in\scrK}.
$$
By construction, $df$ is a Whitney form
$df\in \Omega^1_w(\Sigma,\vecV,\scrT)$. The moduli space $\scrM(\scrT)$ is an
affine space with underlying vector space contained in
$\Omega^0_w(\Sigma,\scrT,\vecV)$ and it follows that $df$ is an
\emph{exact} Whitney form.

These observations motivate the introduction of a space of constant $\vecV$-valued
differentials, closely related to $\Omega^1(\Sigma,\vecV,\scrT)$:
$$
\scrF(\scrT)= \{(F_\sigma)_{\sigma\in\scrK_2}, 
F_\sigma\in \vec\sigma^*\otimes
\vecV\},
$$
where $\vec\sigma^*$ is the dual of the tangent direction $\vec\sigma$.
An element of $\vec\sigma^*\otimes \vecV$ is a linear map
$F_\sigma:\vec\sigma\to\vecV$ and 
this map can be regarded as a constant $\vecV$-valued differential $1$-form
on $\sigma$.

However, there are no compatibility conditions a priori along the edges of the
triangulation. We introduce  the subspace of \emph{Whitney
forms} which is a slightly different condition since $\sigma\in\scrK_2$: 
an element $F=(F_\sigma)_{\sigma\in\scrK_2}\in \scrF(\scrT)$ satisfies the \emph{Whitney
condition} if for every $\sigma_1,\sigma_2 \in\scrK_2$ with a common edge
$\sigma_3$, the pullbacks of $F_{\sigma_1}$ and $F_{\sigma_2}$ agree along
$\sigma_3$. In this case, $F$ is called a \emph{Whitney form} and we define:
$$
\scrF_w(\scrT)=\{F\in\scrF(\scrT),\quad F \mbox{ is a Whitney form}\}.
$$
Thanks to the Whitney condition, a Whitney form $F\in\scrF_w(\scrT)$ defines via the
pullback an extended
family $(F_\sigma)$ for $\sigma\in\scrK$ which is understood as an element
of $\Omega^1_w(\Sigma,\vecV,\scrT)$.
Hence, we have a
canonical embedding 
$$
\scrF_w(\scrT)\hookrightarrow\Omega^1_w(\Sigma,\vecV,\scrT).
$$
The elements  $F\in \scrF_w(\scrT)$ are families of constant differential
forms along the vertices of $\scrT$, hence they are \emph{closed}. Thus, every $F$
defines a Whitney cohomology class denoted $[F]\in H^1_w(\Sigma,\vecV)$.
We  define the subspace of exact forms in $\scrF_w(\scrT)$ by
$$
\scrF_0(\scrF)= \{F\in\scrF_w(\scrT),\quad [F]=0\}.
$$
By construction, the differential $d$ defines a map
$$
d:\scrM(\scrT)\to\scrF_0(\scrT).
$$
Furthermore the map $d$ induced an homeomorphism
$$d:\scrM(\scrT)/\vecV \to\scrF_0(\scrT).
$$
where $\vecV$ acts
by translations on $\scrM(\scrT)$.

\subsection{Moduli space structure in the polyhedral setting}
\label{sec:kahl}
The space $\scrF(\scrT)$ carries a Kähler structure defined similarly to 
the smooth setting. For $F=(F_\sigma)$ and $H=(H_\sigma)\in\scrF(\scrT)$,
we put
$$
\cG(F,H)=\sum_{\sigma\in\scrK_2}\int_\sigma
\ip{F_\sigma,H_\sigma}\omega_\sigma.$$
The integrands of the above integrals are constant on each simplex. Hence
each term of the above integral is equal to 
$\ip{F_\sigma,H_\sigma}\area(\sigma)$, where $\area(\sigma)$ is the area
of $\sigma$ with respect to the Euclidean metric $g_\sigma$.
As in the smooth case, we will use the notation $\ipp{F,H}=\cG(F,H)$ and
$\|F\|_{L^2}$ for the norm deduced from $\cG$.

An almost complex structure
$\cJ$ on $\scrF(\scrT)$ is given by
$$
(\cJ F)_\sigma= - F_\sigma \circ J_\sigma.
$$
By construction $\cJ$ is compatible with $\cG$ and we obtain a Kähler form
$$
\Omega = \cG(\cJ\cdot,\cdot).$$
Finally, we have defined a flat Kähler structure
$$
(\scrF(\scrT),\cG,\cJ,\Omega)
$$
on the moduli space.

As in the smooth case, the almost complex structure $i$ acts by
multiplication on $\vecV$-valued forms and we obtain a linear involution 
$\scrR$ on $\scrF(\scrT)$, defined by
$$
\scrR F = i\cJ F.
$$
The involution gives an orthogonal splitting into eigenspaces
$$
\scrF(\scrT)= \scrF^+(\scrT)\oplus \scrF^-(\scrT)
$$
and we introduce the $\cG$-orthogonal projection onto the subspace of exact
differentials
$$
\Pi:\scrF(\scrT)\to\scrF_0(\scrT).
$$

\subsection{Polyhedral symplectic density}
\label{sec:sympdens}
We denote denote 
by $\LieTT^\CC(\scrT)$ the space of maps
$\zeta:\scrK_2\to\CC$. Equivalently, an element $\zeta$ can be understood
as a family of constant maps $(\zeta_\sigma)_{\sigma\in\scrK_2}$ on each facet
$\sigma$ of the triangulation.
We also denote by $\LieTT(\scrT)$ the space of real valued maps
$\zeta:\scrK_2\to\RR$ and we define 
$$
\mu:\scrF(\scrT)\to \LieTT(\scrT)
$$
by the formula 
$$
\mu(F)(\sigma)=
-\frac 12 \ip{(\scrR F)_\sigma,F_\sigma}.
$$

We can make sense of the pullback $F^*\omega_V$ as a family of differential
$2$-forms along each facet  defined by
$$
(F^*_\sigma\omega_V)(\eta_1,\eta_2)= \omega_V(F_\sigma \cdot
\eta_1,F_\sigma \cdot \eta_2 )
$$
for every $\eta_1,\eta_2 \in \vec\sigma$, and $\sigma\in\scrK_2$.
As in the smooth case, the map $\mu$ is a symplectic density, in the sense
of the following lemma:
\begin{lemma}
	\label{lemma:denspol}
	For every $F\in\scrF(\scrT)$, we have the identity
	\begin{equation}
		\label{eq:denspol}
	\mu(F)(\sigma)=
	-\frac {F_\sigma^*\omega_V}  {\omega_\sigma}
	\end{equation}
for every facet $\sigma\in\scrK_2$.
\end{lemma}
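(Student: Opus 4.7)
The plan is to reduce the identity to a facet-by-facet, pointwise computation that mirrors the argument given for Lemma~\ref{lemma:mm} in the smooth case. Both sides of \eqref{eq:denspol} are defined purely in terms of the constant linear map $F_\sigma : \vec\sigma \to \vecV$ and the Kähler data $(g_\sigma, J_\sigma, \omega_\sigma)$ on the single facet $\sigma$. Hence, fixing $\sigma \in \scrK_2$, it suffices to prove the identity on that facet.

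First I would unpack the left-hand side. By the definition of $\cJ$ on $\scrF(\scrT)$ recalled at \S\ref{sec:kahl}, we have $(\cJ F)_\sigma = -F_\sigma \circ J_\sigma$, and $\scrR = i\cJ$, so $(\scrR F)_\sigma = -iF_\sigma \circ J_\sigma$. This is exactly the same algebraic expression as in the smooth setting, with $J_\Sigma$ replaced by $J_\sigma$. The next step is to pick a $g_\sigma$-orthonormal oriented basis $(e_1, e_2)$ of $\vec\sigma$, which exists because $g_\sigma$ is a genuine Euclidean inner product on the $2$-dimensional vector space $\vec\sigma$. By the characterization of $\omega_\sigma$ and $J_\sigma$, we have $\omega_\sigma(e_1, e_2) = 1$ and $J_\sigma e_1 = e_2$.

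With this basis in hand, the computation becomes entirely pointwise and linear-algebraic. Evaluating $(\scrR F)_\sigma$ on $e_1$ and $e_2$ gives $(\scrR F)_\sigma(e_1) = -iF_\sigma(e_2)$ and $(\scrR F)_\sigma(e_2) = iF_\sigma(e_1)$. Then, by definition of the fiberwise inner product on $\vec\sigma^* \otimes \vecV$,
\[
\langle (\scrR F)_\sigma, F_\sigma \rangle = g_V\bigl(-iF_\sigma(e_2), F_\sigma(e_1)\bigr) + g_V\bigl(iF_\sigma(e_1), F_\sigma(e_2)\bigr) = 2\omega_V\bigl(F_\sigma(e_1), F_\sigma(e_2)\bigr),
\]
using the relation $\omega_V(v_1, v_2) = g_V(iv_1, v_2)$ between $g_V$ and $\omega_V$ on $\vecV$. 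Therefore
\[
\mu(F)(\sigma) = -\tfrac{1}{2}\langle (\scrR F)_\sigma, F_\sigma\rangle = -\omega_V\bigl(F_\sigma(e_1), F_\sigma(e_2)\bigr) = -(F_\sigma^*\omega_V)(e_1, e_2).
\]
Since $\omega_\sigma(e_1, e_2) = 1$, this equals $-\frac{F_\sigma^* \omega_V}{\omega_\sigma}$, giving the desired formula.

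I do not anticipate a serious obstacle: the proof is a direct transcription of the smooth case. The only point that requires a small verification is that, because $F_\sigma$ and $J_\sigma$ are constant on $\sigma$, the pullback $F_\sigma^*\omega_V$ is a constant $2$-form on $\vec\sigma$, so the ratio $F_\sigma^*\omega_V / \omega_\sigma$ is a well-defined constant scalar, coinciding with $\mu(F)(\sigma)$ as a constant function on $\sigma$. This is immediate from the fact that $\Lambda^2\vec\sigma^*$ is one-dimensional and $\omega_\sigma$ is a non-vanishing generator.
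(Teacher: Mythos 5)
Your proof is correct and is essentially the paper's argument: the paper leaves Lemma~\ref{lemma:denspol} as a facet-wise transcription of the smooth computation in Lemma~\ref{lemma:mm}, which is exactly the orthonormal-basis calculation you carry out with $(e_1,e_2)$, $J_\sigma e_1=e_2$, $\omega_\sigma(e_1,e_2)=1$. Nothing is missing.
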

This motivates the following definition:
\begin{dfn}
	A differential $F\in\scrF(\scrT)$ is called isotropic if
	$F_\sigma:\vec\sigma\to V$ is isotropic for every $\sigma\in\scrK_2$.
	A  polyhedral map $f\in\scrM(\scrT)$ is called isotropic if
	the pullback of $\omega_V$ by $f$ vanishes along every simplex of
	the triangulation.
\end{dfn}
In particular, by Lemma~\eqref{lemma:denspol}, we have
\begin{lemma}
	\label{lemma:isotropic}
	A differential $F\in\scrF(\scrT)$ is isotropic if, and only if,
	$\mu(F)=0$.
	A polyhedral map $f\in\scrM(\scrT)$ is isotropic if, and only if,
	$F=df$ is isotropic.
\end{lemma}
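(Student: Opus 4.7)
The plan is to deduce both statements of Lemma~\ref{lemma:isotropic} almost directly from Lemma~\ref{lemma:denspol} and the respective definitions, with only a small observation about dimension needed.

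For the first assertion, I start from the identity
\[
\mu(F)(\sigma)=-\frac{F_\sigma^*\omega_V}{\omega_\sigma}
\]
provided by Lemma~\ref{lemma:denspol}. Since each facet $\sigma\in\scrK_2$ has $\dim\vec\sigma=2$, the space of alternating $2$-forms on $\vec\sigma$ is one-dimensional, spanned by $\omega_\sigma$. Hence $F_\sigma^*\omega_V$ is a scalar multiple of $\omega_\sigma$, and the quotient in the formula is well defined. It follows that the equation $\mu(F)(\sigma)=0$ is equivalent to $F_\sigma^*\omega_V=0$, i.e., to the isotropy of the linear map $F_\sigma:\vec\sigma\to\vecV$. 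Running this equivalence simplex by simplex over all $\sigma\in\scrK_2$ yields $\mu(F)=0 \iff F$ is isotropic.

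For the second assertion, I would first note that since the restriction of $f\in\scrM(\scrT)$ to each simplex is affine, $df|_\sigma$ is a constant $\vecV$-valued $1$-form on $\sigma$, and by construction $(df)_\sigma = df|_\sigma$ for every facet $\sigma\in\scrK_2$. The pullback $f^*\omega_V$ restricted to a facet $\sigma$ coincides with $(df|_\sigma)^*\omega_V = F_\sigma^*\omega_V$, where $F=df$. For simplices of dimension $\leq 1$ the pullback of the $2$-form $\omega_V$ vanishes identically for trivial dimensional reasons. Therefore $f^*\omega_V$ vanishes on every simplex of $\scrT$ if and only if $F_\sigma^*\omega_V=0$ for every $\sigma\in\scrK_2$, which by the definition is precisely the condition that $F=df$ is isotropic.

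There is no serious obstacle in this argument: the only subtle point is ensuring that the symplectic density $\mu$ captures the full isotropy condition, which rests on the one-dimensionality of $\Lambda^2\vec\sigma^*$, and the dimensional reduction argument that allows one to restrict attention to facets when checking isotropy of a polyhedral map. Both steps are immediate once Lemma~\ref{lemma:denspol} is in hand, so the proof amounts to assembling these observations.
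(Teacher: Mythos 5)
Your argument is correct and follows essentially the same route as the paper, which states this lemma as an immediate consequence of Lemma~\ref{lemma:denspol} and the definitions without further proof. The details you supply (one-dimensionality of $\Lambda^2\vec\sigma^*$ and the dimensional vanishing of the pullback on edges and vertices) are exactly the observations being taken for granted there.
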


\subsection{Hamiltonian gauge group action}
We define the complex gauge group
$\TT^\CC(\scrT)$ as the space of non vanishing complex valued functions
$\lambda:\scrK_2\to\CC^*$.
Alternatively, $\lambda$ can be thought of as a family of constant
functions $(\lambda_\sigma)$ along each facet of the triangulation.
The group $\TT^\CC(\scrT)$ acts on $\scrF(\scrT)$ via the formula
\begin{equation}
	\label{eq:acpol}
	\lambda \cdot F = \bar\lambda^{-1}F^+ + \lambda F^-,
\end{equation}
where $F^\pm$ are the component of $F$ given by  the
splitting $\scrF(\scrT)=\scrF^+(\scrT)\oplus \scrF^-(\scrT)$.
The real subgroup $\TT(\scrT)$ is the the space of functions
$\lambda:\scrK_2\to S^1$ and acts by complex multiplication on
$\scrF(\scrT)$.
The exponential map 
 $$\exp:\LieTT^\CC(\scrT)\to
\TT^\CC(\scrT)$$
defined by
$$\exp \zeta = e^{i\zeta},$$
shows that $\LieTT^\CC(\scrT)$ is the Lie algebra of $\TT^\CC(\scrT)$ and 
identifies the  Lie algebra of the subgroup $\LieTT(\scrT)$ with $\LieTT(\scrT)$.
The infinitesimal action of $\LieTT^\CC(\scrT)$ on $\scrF(\scrT)$ is given by
$$
X_\zeta(F)= i\bar \zeta F^++i\zeta F^-.
$$
As in the smooth case, the construction has the following nice properties:
\begin{prop}
	The $\TT^\CC(\scrT)$-action on $\scrF(\scrT)$ perserves the almost complex
	structure $\cJ$. Furthermore, the $\TT^\CC(\scrT)$-action is the
	$\cJ$-complexification of the $\TT(\scrT)$-action.
The  $\TT$-action  preserves the metric $\cG$ and the
	symplectic form $\Omega$ as well.
	Furthermore the $\TT(\scrT)$-action is Hamiltonian with moment map 
	$\mu:\scrF(\scrT)\to\LieTT(\scrT)$ given by
	Formula~\eqref{eq:denspol}. More
	precisely, $\mu$ is $\TT(\scrT)$-invariant and for every
	$\zeta\in\LieTT(\scrT)$, we have
	$$
	D\ipp{\mu,\zeta} =
-	\iota_{X_\zeta}\Omega.
$$
\end{prop}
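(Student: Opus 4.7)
The plan is to mirror the proof in the smooth setting (Theorem~\ref{theo:mm} together with Proposition~\ref{lemma:inftac} and Lemma~\ref{lemma:dmu}), exploiting the fact that every structure introduced on $\scrF(\scrT)$ is defined \emph{facet by facet}. On each $\sigma\in\scrK_2$ the data $(\vec\sigma,g_\sigma,J_\sigma,\omega_\sigma)$ plays the role of $(T_x\Sigma,g_\Sigma,J_\Sigma,\omega_\Sigma)$ and $(F_\sigma,\bar\lambda_\sigma^{-1},\lambda_\sigma)$ play the role of $(F_x,\bar\lambda(x)^{-1},\lambda(x))$ in the pointwise calculations of \S\ref{sec:dream}. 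Since the integrals defining $\cG$ and $\Omega$ are finite sums of integrals of constant integrands weighted by $\area(\sigma)$, the analytical complications of the smooth infinite dimensional case disappear entirely; all identities are pointwise linear algebra on each facet.

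First I would verify that the subspaces $\scrF^\pm(\scrT)$ are preserved by $\TT^\CC(\scrT)$ and by $\cJ$, which is immediate from the facet-wise definitions and the fact that on each $\sigma$ the involution $\scrR$ commutes with multiplication by any complex scalar $\bar\lambda_\sigma^{-1}$ or $\lambda_\sigma$. Then $\cJ$-invariance of the $\TT^\CC(\scrT)$-action follows from $\cJ$ acting by $-(\cdot)\circ J_\sigma$ on each facet and commuting with the multiplicative action. The complexification statement reduces, facet by facet, to the identities~\eqref{eq:infcxactionreal} and \eqref{eq:infcxactionim}: computing
\[
\cJ\cdot X_\zeta(F)\big|_\sigma = \cJ \cdot (i\zeta_\sigma F_\sigma)
= \zeta_\sigma \scrR F|_\sigma = -i(i\zeta_\sigma)\scrR F|_\sigma = X_{i\zeta}(F)\big|_\sigma
\]
for $\zeta\in\LieTT(\scrT)$. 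Invariance of $\cG$ (and hence of $\Omega$) under $\TT(\scrT)$ is trivial since the real subgroup acts by fiberwise multiplication by elements of $S^1$.

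The substance of the statement is the moment map identity. Invariance of $\mu$ is immediate from its definition $-\tfrac12 \ip{(\scrR F)_\sigma,F_\sigma}$ and the isometric character of the $\TT(\scrT)$-action. For the differential, I would prove the polyhedral analogue of Lemma~\ref{lemma:dmu}: by bilinearity and self-adjointness of $\scrR$ with respect to $g_\sigma$,
\[
D\mu|_F\cdot \dot F\,(\sigma) = -\ip{(\scrR F)_\sigma,\dot F_\sigma}
\qquad\text{for every }\sigma\in\scrK_2.
\]
Given $\zeta\in\LieTT(\scrT)$, using $X_\zeta(F)=i\zeta F$ facet by facet, I then compute
\begin{align*}
\Omega(X_\zeta(F),\dot F) &= \ipp{\cJ (i\zeta F),\dot F}
 = \ipp{\zeta\,\scrR F,\dot F}\\
 &= \sum_{\sigma\in\scrK_2}\zeta_\sigma \ip{(\scrR F)_\sigma,\dot F_\sigma}\,\area(\sigma)\\
 &= -\sum_{\sigma\in\scrK_2} \zeta_\sigma\, (D\mu|_F\cdot\dot F)(\sigma)\,\area(\sigma)
 = -\ipp{D\mu|_F\cdot\dot F,\zeta},
\end{align*}
which is the required identity $D\ipp{\mu,\zeta} = -\iota_{X_\zeta}\Omega$ once one notes that $\ipp{\cdot,\cdot}$ on $\LieTT(\scrT)$ is the area-weighted inner product naturally induced by $\cG$.

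There is no real obstacle here beyond bookkeeping: the main point to check carefully is that the inner product on $\LieTT(\scrT)$ used in the statement is precisely the area-weighted one $\ipp{\zeta,\eta} = \sum_{\sigma\in\scrK_2}\zeta_\sigma\eta_\sigma\area(\sigma)$, so that the pointwise identity from Lemma~\ref{lemma:denspol} and the facet-wise computation of $D\mu$ combine correctly. Everything else is an identical transcription of the arguments of \S\ref{sec:dream}, with integrals replaced by finite weighted sums and pointwise fiber data replaced by facet-wise constant data.
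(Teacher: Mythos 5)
Your proposal is correct and is essentially the paper's own proof: the paper simply states that the argument is formally identical to the smooth setting, and your facet-by-facet transcription of Lemma~\ref{lemma:dmu} and the computation of Theorem~\ref{theo:mm} (with integrals replaced by area-weighted finite sums, and the area-weighted pairing on $\LieTT(\scrT)$) is exactly that intended argument, carried out explicitly.
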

\begin{proof}
The proof is formally identical to the smooth setting.
\end{proof}

\subsection{Polyhedral modified moment map flow}
By analogy with the smooth setting, we consider the energy of the moment
map
$$\phi :\scrF(\scrT)\to\RR
$$
defined by
$$
\phi(F) = \frac 12 \|\mu(F)\|^2_{L^2}.
$$
By contruction, $\phi$ is non negative and its vanishing set is the
subspace of isotropic differentials in $\scrF(\scrT)$.
We denote by $\nabla \phi$ the gradient of $\phi:\scrF\to\RR$ with respect
to the metric $\cG$. The gradient of the restricted functional
$\phi:\scrF_0(\scrT)\to \RR$ is denoted $\nabla^\circ\phi$ and satisfies
$\nabla^\circ\phi(F)=\Pi(\nabla\phi(F))$.
Then we define the polyhedral modified moment map flow by the evolution
equation along $\scrF_0(\scrT)$:
\begin{equation}
	\label{eq:mmmff}
	\frac{\del F}{\del t}= - \nabla^\circ \phi(F).
\end{equation}
As in the smooth case, the formal identities are the same and we can prove:
\begin{prop}
	\label{prop:fixed}
	The critical points of the restricted functional
	$\phi:\scrF_0(\scrT)\to \RR$, in other words the fixed points of
	the modified moment map flow, agree with its vanishing locus.
	
	For every $F\in\scrF_0(\scrT)$, there exists a
	polyhedral map $f\in\scrM(\scrT)$ such that $F=df$, unique up to the action of
	$\vecV$ with the property that $f$ is isotropic if, and only if, $\phi(F)=0$. 
\end{prop}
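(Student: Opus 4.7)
The strategy is to mimic the smooth proof of Corollary~\ref{cor:crit}, since the algebraic structure underlying $\mu$, $\scrR$, and $\cG$ on $\scrF(\scrT)$ is, facet by facet, formally identical to the pointwise structure on $\scrF$ used in \S\ref{sec:dream}. The fixed point set of the flow \eqref{eq:mmmff} coincides with the critical set of $\phi:\scrF_0(\scrT)\to\RR$ by the very definition of $\nabla^\circ\phi$: since $\scrF_0(\scrT)$ is a linear subspace of $\scrF(\scrT)$, its tangent space at every point is itself, and $\nabla^\circ\phi(F)=\Pi(\nabla\phi(F))$ vanishes precisely when $D\phi|_F$ vanishes on $\scrF_0(\scrT)$.

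For the first statement, I would establish the chain (critical point of $\phi|_{\scrF_0(\scrT)}$) $\Rightarrow$ (zero of $\phi$) $\Rightarrow$ (zero of $\mu$) $\Rightarrow$ (isotropic) $\Rightarrow$ (critical point). First I would verify the polyhedral analogues of Lemma~\ref{lemma:dmu} and Proposition~\ref{prop:gradient}. Both are purely algebraic once one observes that $\scrR$ is pointwise $g_\sigma$-self-adjoint on each facet and $\mu(F)$ depends quadratically on $F$; integrating against $\omega_\sigma$ and summing over $\sigma\in\scrK_2$ yields
\[
D\phi|_F \cdot \dot F = -\ipp{\mu(F),g(\scrR F,\dot F)}, \qquad D\phi|_F\cdot F = 4\phi(F),
\]
the second identity obtained by specialising the first to $\dot F = F$. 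If $F$ is a critical point of the restriction to $\scrF_0(\scrT)$, then since $F$ itself lies in the linear subspace $\scrF_0(\scrT)$ we may take $\dot F = F$ as an admissible variation and conclude $0 = D\phi|_F\cdot F = 4\phi(F)$. Conversely, $\phi(F)=0$ forces $\mu(F)=0$ by non-negativity of $\phi$, which in turn makes $\nabla\phi(F) = -\mu(F)\scrR F$ vanish identically, so $F$ is even a critical point of the unrestricted functional and a fortiori of its restriction. The equivalence with isotropy comes from Lemma~\ref{lemma:isotropic}.

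For the second statement, I would invoke the bijection $d:\scrM(\scrT)/\vecV \to \scrF_0(\scrT)$ established earlier. Given $F\in\scrF_0(\scrT)$, this produces a polyhedral map $f\in\scrM(\scrT)$ with $df = F$, unique modulo translations by $\vecV$. By Lemma~\ref{lemma:isotropic}, $f$ is isotropic if and only if $df = F$ is isotropic, which by the first part is equivalent to $\phi(F)=0$. There is essentially no analytic obstacle in this polyhedral version, in contrast with the smooth setting: $\scrF(\scrT)$ is finite dimensional, the projection $\Pi$ requires no elliptic analysis, and all identities on $\mu$ and $\phi$ reduce to algebra on each facet. The only point requiring a moment of care is the use of $F$ itself as a variation at $F$ when deducing $\phi(F)=0$ from criticality, which is legitimate precisely because $\scrF_0(\scrT)$ is a linear subspace of $\scrF(\scrT)$.
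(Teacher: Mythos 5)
Your argument is correct and is exactly the route the paper intends: it transplants the smooth proof of Corollary~\ref{cor:crit} (via the polyhedral analogues of Lemma~\ref{lemma:dmu} and Proposition~\ref{prop:gradient}, in particular $D\phi|_F\cdot F=4\phi(F)$ with $\dot F=F$ admissible because $\scrF_0(\scrT)$ is linear), and handles the second statement through the bijection $d:\scrM(\scrT)/\vecV\to\scrF_0(\scrT)$ together with Lemma~\ref{lemma:isotropic}, which is precisely what the paper's ``the formal identities are the same as in the smooth case'' proof amounts to.
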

\begin{rmk}
	\label{rmk:flowfunc}
The Second statement of the proposition implies that the modified moment
	map flow defines a flow for polyhedral maps as well: we fix a point
	$x_0\in\Sigma$ and a point $v_0\in V$. For every
	$F\in\scrF_0(\scrF)$, we define the integral
	$f=\chi(F)\in\scrM(\scrT)$ as the unique map such that $d f =
	F$ and $f(x_0)=v_0$. Any solution $F_t$ of the modified moment map
	flow defines a family $f_t=\chi(F_t)$ solution of the evolution
	equation
\begin{equation}
	\label{eq:liftflow}
	\boxed{
	\frac{\del f}{\del t} = -\chi\circ \nabla^\circ \phi ( d f) }
\end{equation}
and vice-versa.
	The fixed points of this flow are, by definition, the isotropic polyhedral maps.
\end{rmk}

	\begin{prop}
		\label{prop:decay}
		A solution of the polyhedral modified moment map flow
		$F_t\in\scrF_0(\scrT)$, defined for $t$ in some interval 
		satisfies
$$
		\frac \del{\del t} \|F\|^2_{L^2} = -8\phi(F_t).
$$
In particular, the $L^2$-norm is non increasing along the interval.
\end{prop}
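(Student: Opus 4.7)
The plan is to mimic exactly the smooth computation carried out in the proof of Theorem~\ref{theo:flownorm}, since all the relevant algebraic identities from the smooth setting carry over verbatim to the polyhedral setting with $\cG$ replaced by the sum-over-facets inner product of \S\ref{sec:kahl}.

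First I would differentiate $\|F_t\|_{L^2}^2 = \ipp{F_t,F_t}$ with respect to $t$, which gives
$$
\frac{1}{2}\frac{\partial}{\partial t}\|F_t\|_{L^2}^2 = \ipp{\tfrac{\partial F_t}{\partial t}, F_t}.
$$
Then I would plug in the flow equation~\eqref{eq:mmmff} to get $-\ipp{\nabla^\circ \phi(F_t), F_t}$. The next step is to note that $\nabla^\circ\phi(F_t) = \Pi(\nabla\phi(F_t))$ where $\Pi$ is the $\cG$-orthogonal projector onto $\scrF_0(\scrT)$, and that $F_t \in \scrF_0(\scrT)$ along the flow. Since $\Pi$ is a $\cG$-orthogonal projection it is self-adjoint, so
$$
\ipp{\nabla^\circ\phi(F_t), F_t} = \ipp{\Pi \nabla\phi(F_t), F_t} = \ipp{\nabla\phi(F_t), \Pi F_t} = \ipp{\nabla\phi(F_t), F_t}.
$$

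It remains to establish the polyhedral analog of the identity $D\phi|_F \cdot F = 4\phi(F)$ from Proposition~\ref{prop:gradient}. The same algebra works facet by facet: by Lemma~\ref{lemma:dmu} transplanted to the polyhedral setting (which follows from the bilinearity of $\ip{\cdot,\cdot}$ and the pointwise self-adjointness of $\scrR$ on each facet), we have $D\mu|_F\cdot \dot F = -\ip{\scrR F, \dot F}$, so
$$
D\phi|_F\cdot F = \ipp{\mu(F), D\mu|_F\cdot F} = -\ipp{\mu(F), \ip{\scrR F, F}} = 2\ipp{\mu(F), \mu(F)} = 4\phi(F),
$$
using the facet-wise identity $-\tfrac12 \ip{\scrR F_\sigma, F_\sigma} = \mu(F)(\sigma)$ from \S\ref{sec:sympdens}. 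Equivalently $\ipp{\nabla\phi(F),F} = 4\phi(F)$. Combining with the previous display yields $\tfrac{\partial}{\partial t}\|F_t\|_{L^2}^2 = -8\phi(F_t)$, and the monotonicity statement follows immediately since $\phi \geq 0$.

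There is no serious obstacle in this argument: the proof is purely formal and directly parallels the smooth case, the only substantive point being that the self-adjointness of the projector $\Pi$ (which in the smooth setting required the Hodge-theoretic input of Proposition~\ref{prop:hodge}) is automatic here because $\scrF(\scrT)$ is finite-dimensional and $\Pi$ is an orthogonal projection onto a linear subspace.
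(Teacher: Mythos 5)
Your proposal is correct and follows essentially the same route as the paper: the paper's proof simply invokes the smooth computation of Theorem~\ref{theo:flownorm} together with the polyhedral identity $D\phi|_F\cdot F=4\phi(F)$, which is exactly what you reprove facet by facet, and your handling of the projection $\Pi$ (self-adjointness plus $F_t\in\scrF_0(\scrT)$) is the step implicit in the line $-\ipp{\nabla^\circ\phi(F_t),F_t}=-\ipp{\nabla\phi(F_t),F_t}$ of the smooth argument.
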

\begin{proof}
The ingredients of the proof are the same as in Theorem~\ref{theo:flownorm},
	for the smooth setting. In the polyhedral context, we have the
	identity
\begin{equation}
	\label{eq:decpol}
	D\phi|_F\cdot F = 4\phi(F)
\end{equation}
	identical to the one in Proposition~\ref{prop:gradient} and the rest of the
	argument is identical.
\end{proof}

Proposition~\ref{prop:decay} has the following strong consequence:
\begin{cor}
	\label{cor:long}
	For every $F\in\scrF_0(\scrT)$, there exists a
	unique solution $F_t$ defined for $t\in [0,+\infty)$ and such that
	$F_0=F$.
	Furthermore, $\|F_t\|_{L^2}$ is bounded by $\|F\|_{L^2}$.
\end{cor}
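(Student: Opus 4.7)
The plan is to combine a local Cauchy–Lipschitz argument with the a priori $L^2$-bound from Proposition~\ref{prop:decay} to upgrade short-time existence to a global solution on $[0,+\infty)$.

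First I would establish local existence and uniqueness. Since $\scrF_0(\scrT)$ is a finite dimensional vector subspace of $\scrF(\scrT)$, the Hodge-type projector $\Pi:\scrF(\scrT)\to\scrF_0(\scrT)$ is simply the orthogonal projection onto a linear subspace and is therefore smooth (in fact linear). By Proposition~\ref{prop:gradient}, which transposes verbatim to the polyhedral setting, the vector field $\nabla\phi$ is polynomial of degree $3$ in the entries of $F$, and thus the restricted vector field $\nabla^\circ\phi=\Pi\circ\nabla\phi$ is polynomial as well, in particular locally Lipschitz. The classical Cauchy–Lipschitz theorem then provides, for every initial datum $F\in\scrF_0(\scrT)$, a unique maximal solution $F_t$ of \eqref{eq:mmmff} on some open interval $[0,T_{\max})$ with $F_0=F$.

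Next I would derive the uniform $L^2$-bound. Proposition~\ref{prop:decay} asserts that along any solution one has $\frac{\partial}{\partial t}\|F_t\|_{L^2}^2=-8\phi(F_t)\leq 0$, since $\phi\geq 0$. Integrating this inequality on $[0,t]$ yields
\begin{equation*}
\|F_t\|_{L^2}^2 \leq \|F\|_{L^2}^2 \qquad \text{for every } t\in[0,T_{\max}),
\end{equation*}
which gives the second conclusion of the corollary and, more importantly, keeps the trajectory inside a closed ball of $\scrF_0(\scrT)$.

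Finally I would promote this a priori bound to global existence. Because $\scrF_0(\scrT)$ is finite dimensional, the closed $L^2$-ball of radius $\|F\|_{L^2}$ is compact, and the standard blow-up criterion for ODEs states that a maximal solution can only fail to be global if it leaves every compact set in finite time. Since $F_t$ remains confined in a fixed compact set, this forces $T_{\max}=+\infty$. Uniqueness on $[0,+\infty)$ follows from uniqueness on each bounded subinterval provided by Cauchy–Lipschitz. There is essentially no obstacle here: the only subtle point is the observation that in the polyhedral setting everything is finite dimensional, so no Hölder completion or Schauder estimate is needed, and the polynomial nature of $\nabla^\circ\phi$ combined with the monotonicity of $\|F_t\|_{L^2}$ immediately closes the argument.
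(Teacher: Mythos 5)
Your proposal is correct and follows essentially the same route as the paper: local existence and uniqueness by Cauchy--Lipschitz for the polynomial (hence locally Lipschitz) vector field $\nabla^\circ\phi$ in the finite dimensional space $\scrF_0(\scrT)$, then the monotonicity of $\|F_t\|_{L^2}$ from Proposition~\ref{prop:decay} to rule out finite-time blow-up and conclude $T_{\max}=+\infty$. The paper's argument is just a terser version of yours, and your added detail (compactness of the ball and the blow-up criterion) fills it in accurately.
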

\begin{proof}
	The short time existence is immediate Proposition \ref{prop:decay}
	and classical ODE theory. The decay of the
	$L^2$-norm implies that a solution of the flow cannot blow up in
	finite time, which implies the long time existence.
\end{proof}

\subsection{A Duistermaat theorem}
In fact, the polyhedral modified moment map flow converges and provides a
strong deformation retraction onto the
space of polyhedral isotropic maps.
\begin{theo}
	\label{theo:duis}
	For every $F\in\scrF_0(\scrT)$, the solution $F_t$ of the modified moment
	map flow, defined for $t\in[0,+\infty)$ with $F_0=F$ converges
	toward a limit $F_\infty$ such that $\phi(F_\infty)=0$.
Thus, we can define an extended flow
$$
	\Theta:[0,+\infty]\times\scrF_0(\scrT)\to\scrF_0(\scrT)
$$
	by $\Theta(t,F)=F_t$ and $\Theta(+\infty,F)=F_\infty$.
Furthermore, the map $\Theta$ is a strong deformation retraction onto the vanishing
	locus of $\phi:\scrF_0(\scrT)\to\RR$.
\end{theo}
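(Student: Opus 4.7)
The plan is to exploit that $\scrF_0(\scrT)$ is a finite-dimensional real vector space and that $\phi$ is a polynomial of degree $4$ in $F$ (since $\mu$ is quadratic); in particular $\phi|_{\scrF_0(\scrT)}$ is real analytic. By Corollary~\ref{cor:long}, the trajectory $F_t$ exists for all $t \in [0, +\infty)$ and $\|F_t\|_{L^2} \leq \|F\|_{L^2}$, so the orbit is relatively compact. By Proposition~\ref{prop:fixed} the critical set of $\phi|_{\scrF_0(\scrT)}$ coincides with its vanishing locus and is precisely the fixed-point set of the flow.

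Convergence of each orbit follows from the \emph{\L{}ojasiewicz gradient inequality} for the real analytic function $\phi|_{\scrF_0(\scrT)}$: near any critical point $F^\star$ (necessarily with $\phi(F^\star) = 0$) there exist constants $c, \rho > 0$ and $\theta \in (0, 1/2]$ such that $\phi(F)^{1-\theta} \leq c \|\nabla^\circ \phi(F)\|_{L^2}$ for all $F \in \scrF_0(\scrT)$ with $\|F - F^\star\|_{L^2} < \rho$. Pick any accumulation point $F^\star$ of the orbit (it exists by compactness and is critical since $\phi$ decreases along the flow). Using $\dot F_t = -\nabla^\circ \phi(F_t)$ one computes
\[
-\frac{d}{dt} \phi(F_t)^\theta \;=\; \theta\, \phi(F_t)^{\theta-1} \|\nabla^\circ\phi(F_t)\|_{L^2}^{2} \;\geq\; \frac{\theta}{c} \|\dot F_t\|_{L^2}
\]
whenever $F_t \in B(F^\star, \rho)$. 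Since the orbit accumulates at $F^\star$, it eventually enters $B(F^\star, \rho/2)$ at some time $t_0$ with $\phi(F_{t_0})^\theta < \theta\rho/(4c)$. A standard bootstrap argument then shows that $F_t$ cannot leave $B(F^\star, \rho)$ for $t \geq t_0$, and integration yields $\int_{t_0}^{\infty} \|\dot F_t\|_{L^2}\, dt \leq (c/\theta)\phi(F_{t_0})^\theta < \infty$. Finite arc length gives the Cauchy criterion, so $F_t$ converges to a limit $F_\infty$ (necessarily equal to $F^\star$) lying in the vanishing locus of $\phi$.

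It remains to show that the extended flow $\Theta$ is continuous and defines a strong deformation retraction. Continuity of $\Theta$ on $[0, +\infty) \times \scrF_0(\scrT)$ is classical dependence on initial data for ODE. The delicate point is continuity at $(+\infty, F)$: given $\epsilon > 0$, choose $T$ large enough that $F_T$ lies deep inside a Łojasiewicz neighborhood of $F_\infty$ with $\phi(F_T)^\theta$ small compared to $\theta\epsilon/c$; by finite-time continuous dependence, every initial condition $G$ sufficiently close to $F$ satisfies $G_T$ in the same neighborhood with $\phi(G_T)^\theta$ equally small, and the arc length estimate above confines $G_t$ to a neighborhood of $F_\infty$ of radius at most $\epsilon$ for all $t \geq T$, so its limit $G_\infty$ is within $\epsilon$ of $F_\infty$. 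Together with $\Theta(0, \cdot) = \id$ and the fact that points in the vanishing locus are fixed by the flow (they are critical), this yields the strong deformation retraction. The main obstacle is precisely this continuity at $t = +\infty$, since the limit-point map can fail to be continuous for general gradient flows; the Łojasiewicz inequality applied to the \emph{restricted} functional $\phi|_{\scrF_0(\scrT)}$ is what makes it continuous here.
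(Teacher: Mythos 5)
Your proposal is correct and takes essentially the same route as the paper: the paper proves this theorem by invoking \cite[Theorem 7.8.4]{Rol24}, whose proof rests exactly on the ingredients you use — finite dimensionality of $\scrF_0(\scrT)$, polynomiality (hence real analyticity) of $\phi$, the identification of critical points with the vanishing locus, the $L^2$ decay giving long-time existence, and the \L{}ojasiewicz gradient inequality yielding convergence of flow lines and continuity of the extended flow at $t=+\infty$. You have simply written out in full the \L{}ojasiewicz--Simon argument that the paper imports by citation.
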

\begin{proof}
	The proof is essentially contained in  the analogous  result
	\cite[Theorem 7.8.4]{Rol24}, where a downward gradient flow is also
	studied in a finite dimensional situation.
	Although the space $\scrF_0(\scrT)$, the functional $\phi:\scrF_0(\scrT)\to\RR$ and the
	Euclidean metric $\cG$ are completely different in
	\cite{Rol24}, they share similar formal properties:
	\begin{enumerate}
		\item The vector space is finite dimensional.\label{it:fd}
		\item The functional $\phi$ is a polynomial
			function.\label{it:pol}
		\item The functional $\phi$ is non negative.\label{it:pos}
		\item The critical points of $\phi$ agree with its
			vanishing set.\label{it:vs}
		\item The solutions of the downward gradient flow satisfy the
			decay property of Proposition~\ref{prop:decay}
			\label{it:decay}.
	\end{enumerate}
	Properties \eqref{it:fd} and \eqref{it:decay} imply the long time
	existence of the flow proved at Corollary~\ref{cor:long}.
	Property~\eqref{it:pol} implies that $\phi$ is analytic. In
	particular the \L{}ojaziewicz inequality can be applied to $\phi$,
	which is crucial to prove the convergence of the flow lines.
	The
	interested reader may check that only these properties are used in
	the proof of \cite[Theorem 7.8.4]{Rol24} and that they imply
	the congergence and continuity of the flow.
\end{proof}

\subsection{Regular points of the moduli space}
We obtain much stronger result for the behavior of the modified moment map flow in the
polyhedral setting, as proved by Theorem~\ref{theo:duis}. On the contrary, the
regularity of the space of isotropic maps is much easier to study in the
smooth setting, at least
in a neighborhood of an immersion, as showed at~\S\ref{sec:mb}. However,
some partial results can be obtained in the polyhedral case, as we are
going to see in this
section.
\begin{dfn}
	\label{dfn:regular}
	A  differential form $F\in\scrF_0(\scrT)$  is called regular, if
	the differential of 
	the restricted  moment map $\mu:\scrF_0(\scrT)\to\LieTT(\scrT)$ has constant
	rank in a neighborhood of $F$.
\end{dfn}
By definition, the vanishing locus of $\phi:\scrF_0(\scrT)\to\RR$ is a
submanifold of $\scrF_0(\scrT)$ in a neighborhood of a regular point $F$.
Furthermore, the following theorem shows that $\phi$ behaves like a
Morse-Bott function on such a neighborhood:
\begin{theo}
	\label{theo:reg}
	Let $F\in\scrF_0(\scrT)$ be a regular point of the vanishing locus
	of $\phi$. Then, there exists an open neighborhood
	$U\subset\scrF_0(\scrT)$ of $F$, such
	that: 
	\begin{enumerate}
	\item the vanishing locus of $\phi:U\to\RR$ is a submanifold
			of $U$;
		\item the Hessian of $\phi$ is positive definite in
			direction transverse to the vanishing locus;
		\item $U$ is invariant under the modified moment map flow
			$\Theta$, which has an exponential convergence rate.
	
	\end{enumerate}
\end{theo}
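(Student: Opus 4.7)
The plan is to prove the three claims by the constant rank theorem, a Hessian computation, and a Morse-Bott style local analysis of the flow.

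Item (1) follows directly from the regularity assumption: since $\mu:\scrF_0(\scrT)\to\LieTT(\scrT)$ has locally constant rank on some open neighborhood $U_1$ of $F$, the constant rank theorem implies that $\mu^{-1}(0)\cap U_1$ is an embedded submanifold of $U_1$, with tangent space $\ker D\mu|_{F'}$ at each $F'$. Since $\phi=\frac12\|\mu\|_{L^2}^2$, its vanishing locus on $U_1$ coincides with that of $\mu$.

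For item (2), I would repeat verbatim the Hessian computation of \eqref{eq:hesssmooth} in the polyhedral setting; the moment map formalism is formally identical and $\scrF_0(\scrT)\subset\scrF(\scrT)$ is a linear subspace, so all the $\scrR$, $\mu$ and $\nabla\phi$ identities transfer. At $F'\in\mu^{-1}(0)\cap U_1$ and for $\dot F_1,\dot F_2\in\scrF_0(\scrT)$ one obtains
$$D^2\phi|_{F'}(\dot F_1,\dot F_2)=\ipp{D\mu|_{F'}\cdot\dot F_1,\,D\mu|_{F'}\cdot\dot F_2}.$$
This form is nonnegative, with kernel $\ker(D\mu|_{F'})\cap\scrF_0(\scrT)=T_{F'}(\mu^{-1}(0)\cap U_1)$ by the constant rank theorem. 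Hence $D^2\phi|_{F'}$ is positive definite on the $\cG$-orthogonal complement of the vanishing submanifold, establishing (2).

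For item (3), tubular neighborhood theory together with (2) and compactness of small closed balls around $F$ yield constants $c_1,c_2,c_3>0$ and a smaller open neighborhood $U_2\subset U_1$ of $F$ on which
$$c_1\rho(F')^2\leq\phi(F')\leq c_2\rho(F')^2,\qquad \|\nabla^\circ\phi(F')\|_{L^2}^2\geq c_3\,\phi(F'),$$
where $\rho(F')$ is the $\cG$-distance from $F'$ to $\mu^{-1}(0)\cap U_1$. Take $U=\{F'\in U_2:\phi(F')<\epsilon\}$ for $\epsilon>0$ small enough that $\overline{U}\subset U_2$. By Proposition~\ref{prop:decay}, $\phi$ decreases along the flow $\Theta$, so $U$ is flow invariant. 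The \L{}ojasiewicz bound then gives $\frac{d}{dt}\phi(F_t)=-\|\nabla^\circ\phi(F_t)\|_{L^2}^2\leq-c_3\,\phi(F_t)$, hence $\phi(F_t)\leq e^{-c_3 t}\phi(F_0)$; the quadratic bounds force $\rho(F_t)$ to decay exponentially, and a Gronwall argument using $\|\nabla^\circ\phi(F_t)\|_{L^2}\leq C\sqrt{\phi(F_t)}$ shows that $F_t$ is $L^2$-Cauchy with exponential rate, yielding exponential convergence to $F_\infty\in\mu^{-1}(0)\cap U_1$.

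The main obstacle is establishing the \L{}ojasiewicz estimate $\|\nabla^\circ\phi\|_{L^2}^2\geq c_3\phi$ for the restricted gradient $\nabla^\circ\phi=\Pi\nabla\phi$ on $\scrF_0(\scrT)$. The Hessian from (2) yields directly the analogous bound only for the ambient gradient $\nabla\phi$; transferring it to the projected gradient requires the bilinear form $D\mu\,(\cdot),D\mu\,(\cdot)$ to be uniformly nondegenerate on the normal bundle inside $\scrF_0(\scrT)$ in a whole neighborhood of $F$. The constant rank assumption is precisely what guarantees both this nondegeneracy and the uniformity of constants along the vanishing submanifold.
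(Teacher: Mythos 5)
Your proposal follows essentially the same route as the paper: the constant-rank (regularity) hypothesis gives the submanifold structure of the vanishing locus, the polyhedral analogue of Formula~\eqref{eq:hesssmooth} gives positivity of the Hessian of the restricted functional transverse to it, and the flow statements are the classical Morse--Bott/\L{}ojasiewicz facts that the paper dispatches as ``classical ODE theory'', which you spell out explicitly. One small caveat: the invariance of $U=\{\phi<\epsilon\}\cap U_2$ does not follow from the decay of $\phi$ alone (a trajectory could a priori exit $U_2$ while $\phi$ decreases); one needs the trajectory-length bound $\int\|\nabla^\circ\phi(F_t)\|_{L^2}\,dt\leq C\sqrt{\epsilon}$ that you derive afterwards, combined with shrinking $U$ and a continuity/bootstrap argument, to close the loop --- which is exactly the standard argument the paper implicitly invokes.
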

\begin{proof}
	The Hessian of $\phi$ is calculated 
	in the smooth
	setting at \S\ref{sec:mb} and is given a point $F$
	such $\phi(F)=0$ by Formula~\eqref{eq:hesssmooth}.
The same calculations show that the  formula holds in the polyhedral
	setting as well and obtain
	$$
D^2\phi|_F\cdot (\dot F_1,\dot F_2)= \ipp{D\mu|_F\cdot \dot
F_1,D\mu|_F\cdot \dot F_2} 
$$
	a any point $F$ of the vanishing locus of
	$\phi:\scrF_0(\scrT)\to\RR$.
If  $F$ is a regular, the vanishing locus of $\phi:\scrF_0(\scrT)\to\RR$ is
	a submanifold of $\scrF_0(\scrT)$ 
	in a neighborhood of $F$ and the above formula shows
	that the Hessian of $\phi$ is definite positive in transverse
	directions to the submanifold.
In conclusion, $\phi$ behaves like a Morse-Bott function in a neighborhood
of $F$ and its vanishing set is a stable critical component. The statements
of the theorem are classical facts of ODE theory under these assumptions.
\end{proof}

Constructing regular points of the moduli
space is not a simple problem. We are going to show that
Theorem~\ref{theo:approx} provides a construction of regular points. 
In \cite{JRT}, a quotient $2$-torus
$\Sigma=\RR^2/\Gamma$ is considered together with  
a smooth isotropic immersion $f:\Sigma\looparrowright\RR$. 
Refinements of stepsize $\scrO(N^{-1})$ of 
the lattice $\Gamma$ provide a family of quadrangulations $\scrQ_N$ of $\Sigma$, with $N^2$ facets.
The quadrangulations can be completed into triangulations $\scrT_N$ by replacing each
quadrilateral of $\scrQ_N$ with a pyramid.
The main construction of \cite{JRT} provides a sequence of  polyhedral isotropic maps
$f_N\in\scrM(\scrT_N)$, for every $N$ sufficiently large, such that $f_N$
approximates $f$ in $C^1$-norm  by~\cite{Rol22}.
The construction of $f_N$ relies on an effective version of the fixed point principle
applied to the family of symplectic densities
$$
\mu_N\circ d:\scrM(\scrT_N)\to\LieTT(\scrT_N),
$$
where $\mu_N:\scrF_0(\scrT_N)\to \LieTT(\scrT_N)$ is the moment map. 
The fact that the cohomology class of $\omega_V$ vanishes implies that
$\mu_N\circ d $ takes values in the subspace of $\zeta\in\LieTT(\scrT_N)$
orthogonal to constant functions. Hence $\mu_N\circ d$ can never be a
submersion, but it may have maximal rank with corank one.
A careful examination of
the proof in \cite{JRT} allows to make the following observation:
\begin{lemma}
	\label{lemma:reg}
For every $N$ sufficiently large, the differential of the map $
	\mu_N\circ d:\scrM(\scrT_N)\to \LieTT(\scrT_N)$ has maximal rank
	at $f_N$, where $f_N$ is the Jauberteau-Rollin-Tapie approximation
	of $f$. In particular, $F_N=df_N\in\scrF_0(\scrT_N)$ is a regular
	point of the vanishing set of $\phi:\scrF_0(\scrT_N)\to\RR$.
\end{lemma}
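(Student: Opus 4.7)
The plan is to extract from the fixed-point construction of $f_N$ in \cite{JRT} the surjectivity of $D(\mu_N \circ d)|_{f_N}$ onto the natural constraint subspace, and then invoke lower semi-continuity of the rank function to promote maximal rank at a single point into constant rank in a neighborhood, as required by Definition~\ref{dfn:regular}.

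First, I would recall the structure of the argument in \cite{JRT}. The polyhedral approximation $f_N$ is produced by a Newton-type iterative scheme whose linearized step requires inverting $D(\mu_N \circ d)$ on the hyperplane $\LieTT(\scrT_N) \ominus \RR$ orthogonal to constant functions. Convergence of the iteration relies on a quantitative bound, uniform in $N$, on a right-inverse to this linearization. Such a bounded right-inverse exists not only at the initial seed (obtained by discretizing the smooth immersion $f$), but persists along the iterates and up to their limit $f_N$ by the standard contraction-mapping argument. Consequently, $D(\mu_N \circ d)|_{f_N}$ is surjective onto $\LieTT(\scrT_N) \ominus \RR$.

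Second, I would note that the image of $\mu_N \circ d$ is \emph{a priori} constrained to lie in $\LieTT(\scrT_N) \ominus \RR$: indeed, pairing the pointwise identity of Lemma~\ref{lemma:denspol} against the constant function $1$ gives $\sum_{\sigma} \mu_N(dg)(\sigma)\, \area(\sigma) = -\int_\Sigma g^*\omega_V$, which vanishes for every polyhedral map $g$ since $[\omega_V]=0$ in cohomology. Hence surjectivity onto this hyperplane is precisely the maximal possible rank for $D(\mu_N \circ d)|_{f_N}$. The rank of a smoothly varying linear map is lower semi-continuous, so combined with the upper bound $\dim \LieTT(\scrT_N) - 1$, the rank is locally constant on an $\scrM(\scrT_N)$-neighborhood of $f_N$. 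Transferring via the bijection $d : \scrM(\scrT_N)/\vecV \to \scrF_0(\scrT_N)$, the restricted moment map $\mu_N : \scrF_0(\scrT_N) \to \LieTT(\scrT_N)$ has constant rank near $F_N = df_N$, which is exactly the regularity condition.

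The main obstacle is the extraction step: the authors themselves only allude to ``a careful examination of the proof'' in \cite{JRT}. Concretely, one must verify that the estimates closing the iteration do yield a genuine bounded right-inverse at $f_N$ itself, rather than merely an approximate inverse sufficient for convergence. In a Newton or Nash--Moser scheme this is automatic by continuity of inversion near the fixed point, but care is needed to confirm that the particular scheme in \cite{JRT} is set up on the orthogonal complement to constants and does not rely on non-generic structure that would degenerate at the limit. Once this point is cleared, the rest of the argument is a short exercise in linear algebra and the transfer via $d$.
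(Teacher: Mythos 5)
Your proposal matches the paper's own treatment: the paper likewise obtains maximal (corank-one) rank at $f_N$ by combining the quantitative fixed-point construction of \cite{JRT} (which furnishes a right inverse of the linearization on the complement of the constants) with the observation that $[\omega_V]=0$ forces $\mu_N\circ d$ to take values orthogonal to constants, and it offers no more detail on the extraction from \cite{JRT} than you do. Your explicit use of lower semicontinuity of the rank to pass from maximal rank at $f_N$ to the locally constant rank required by Definition~\ref{dfn:regular} at $F_N=df_N$ is a step the paper leaves implicit, and it is correct.
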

We can now complete the proofs of the main theorems given in the
introduction.
\begin{proof}[Proof of Theorem~\ref{theo:mainbis}]
	The result is an immediate consequence of Lemma \ref{lemma:reg}.
\end{proof}

	\begin{proof}[Proof of Theorem~\ref{theo:main}]
	The result is a combination of Proposition~\ref{prop:fixed}, Theorem~\ref{theo:duis} and
		Theorem~\ref{theo:reg}. The statement $(3)$ will be proved
		at  the end of \S\ref{sec:topo}.
\end{proof}
\begin{proof}[Proof of Corollary~\ref{cor:main}]
	The differential $d:\scrM(\scrT)\to\scrF_0(\scrT)$  admits a unique right inverse
$$
\chi:\scrF_0(\scrT)\to\scrM(\scrT)
$$
defined by the condition that $d\circ\chi= \id$ and 
	$f=\chi(F)$ satisfies $f(x_0)=v_0$ for some
marked points $x_0\in\Sigma$ and $v_0\in V$.
We consider the continuous map
$$
\hat \Theta(t,f)= \chi\circ \Theta(t,df)+ (f(x_0)-v_0).
$$
By construction
$$
\hat\Theta:[0,+\infty]\times\scrM(\scrT)\to\scrM(\scrT)
$$
defines a strong deformation retraction of $\scrM(\scrT)$ onto the subspace of isotropic
polyhedral maps. It is easy to see that $\hat\Theta$ is  the flow
of the evolution equation on $\scrM(\scrT)$ defined at
Remark~\ref{rmk:flowfunc}. In particular $\Theta_t\circ d =
	d\circ\hat\Theta_t$, which proves the corollary.
\end{proof}

\subsection{Topology of the moduli space}
\label{sec:topo}
The homotopy  of the space of smooth isotropic immersions
$f:\Sigma\looparrowright V$ can be reduced to a matter of algebraic
topology,
thanks to the Gromov-Lees theorem~\cite{Gro86,Lee76}: every isotropic immersion $f$ defines
an exact differential form $F=df\in\Omega^1(\Sigma,\vecV)$ which is by
definition an
 isotropic monomorphism.
The Gromov-Lees theorem establishes an  h-principle, which shows that the
differential $d:\scrM/\vecV\to\scrF_0$
induces a homotopy equivalence from the space of isotropic immersions to
the space of isotropic monomorphisms. Computing the homotopy of the latter
space turns out to be a basic problem of algebraic topology, related to
the Grassmanian
of isotropic planes in $\vecV$.

Unfortunately, the Gromov-Lees theory does not extend immediately to
the polyhedral setting. Thus, most topological properties for 
the space of polyhedral isotropic
immersions are open questions although the partial results of
\cite{JRT,Rol22,Eto} lead us to state
Conjecture~\ref{conj:intro}.

The map $d:\scrM(\scrT)/\vecV \to\scrF_{0}(\scrT)$ is a homeomorphism, that
identifies the subspace of polyhedral isotropic maps modulo $\vecV$ with
the space of exact isotropic differentials. 
 The latter space is a cone in
$\scrF_0(\scrT)$, which implies that it is contractible. Therefore,
the space of polyhedral isotropic maps is homotopically trivial. The same
property holds in the smooth setting, which is why the problem of
immersions is
considered instead. There are several issues here to formulate analogous
questions in the polyhedral setting:
\begin{enumerate}
	\item A polyhedral map
$f\in\scrM(\scrT)$ which is a topological immersion define a monomorphism
$F=df\in\scrF_0(\scrT)$. However the converse is generally not true and
some local injectivity condition along the skeleton of the triangulation
should be added to obtain a correspondence.
\item 
The space of monomorphisms in $\scrF_0(\scrT)$ does not seem to be
preserved by the modified moment map flow $\Theta$. From this point, the prospect of 
using the flow to
investigate the homotopy type of the space of polyhedral isotropic
immersions seems rather low.
\end{enumerate}
However the space of non constant polyhedral isotropic maps may already contain
some non trivial topology. Furthemore, this subspace modulo $\vecV$ is
identified to $\scrF_0(\scrT)\setminus 0$ via $d$. Finally, $\scrF_0(\scrT)\setminus 0$
is invariant under the gauge group action and invariant under the modified
moment map flow, for finite time.
\begin{question}
	\label{q:homot}
	What is the homotopy type of the space of non constant polyhedral isotropic maps
	$f\in\scrM(\scrT)$~?
\end{question}
The  space of isotropic exact differentials in $\scrF_0(\scrT)\setminus 0$ is
invariant by scaling. In particular, it is homotopically
equivalent to the vanishing set of the restriction $\phi:\SS(\scrT)\to\RR$, where
$\SS(\scrT)$ is the unit sphere in $\scrF_0(\scrT)$.

The idea to tackle Question~\ref{q:homot}  is to interpret $\phi:\SS(\scrT)\to\RR$ as a Morse-Bott function
to obtain some dynamical interaction between its critical components. By
definition the critical set of $\phi:\SS(\scrT)\to\RR$ contains its
vanishing set, but there may be other critical points,
called the \emph{solitons}. More explicitly, a soliton $F\in\scrF_0(\scrT)$ is
a solution of  the  equation
$
\nabla^\circ\phi(F)=\kappa F$
for some $\kappa\in\RR$. The constant $\kappa$  is determined by
Formula~\ref{eq:decpol} and we obtain the soliton equation:
\begin{equation}
	\label{eq:soliton}
	\boxed{
\|F\|^2_{L^2}\nabla^\circ\phi(F) = 4\phi(F). 
}
\end{equation}
Solitons are by definition the fixed points of  the downward gradient flow of the
restricted functional $\phi:\SS(\scrT)\to\RR$ defined for $F\in\SS(\scrT)$ by
\begin{equation}
	\label{eq:ren}
	\boxed{
		\frac{\del F}{\del t}= 4\phi(F) -\nabla^\circ\phi(F)  
}
\end{equation}
and called the \emph{renormalized} polyhedral flow.

Many questions are open at this stage for the renormalized flow and the
solitons:
\begin{enumerate}
	\item Is $\phi:\SS(\scrT)\to \RR$ a Morse-Bott type function, in
		some sense, perhaps up to
		the choice a refinement of the  triangulation and a generic polyhedral metric~?
	\item What are the intrinsic geometrical properties of polyhedral maps
		$f\in\scrM(\scrT)$ such that $F=df$ is a  soliton ?
	\item Does the functional $\phi:\SS(\scrT)\to\RR$ define a
		Morse-Bott cohomology theory ?
\end{enumerate}

We can prove that solitons never reduce to the vanishing locus of $\phi$.
\begin{lemma}
	There exists solitons $G\in\scrF_0(\scrT)$ such that $\phi(G)\neq
	0$.
\end{lemma}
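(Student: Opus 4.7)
The plan is to produce a soliton as a global maximizer of the restricted energy $\phi:\SS(\scrT)\to\RR$ on the $L^2$-unit sphere $\SS(\scrT)$ of $\scrF_0(\scrT)$. Since $\scrF_0(\scrT)$ is finite dimensional, $\SS(\scrT)$ is compact and the continuous function $\phi$ attains its maximum there. The only substantive input needed is to exhibit a single point of $\SS(\scrT)$ at which $\phi$ is strictly positive, which will force the maximum itself to satisfy $\phi>0$.

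For this, I would write down an explicit polyhedral map $f\in\scrM(\scrT)$ whose differential $F=df\in\scrF_0(\scrT)$ is non-isotropic on at least one facet. Fix any facet $\sigma_0\in\scrK_2$ and any complex line $L\subset\vecV$, and choose $f$ so that $f|_{\sigma_0}$ is an affine isomorphism onto a triangle in $L$, extended arbitrarily to the remaining vertices of $\scrT$. Then $F_{\sigma_0}:\vec\sigma_0\to L$ is a linear isomorphism between two oriented real planes, so $F_{\sigma_0}^*\omega_V$ is a non-zero scalar multiple of $\omega_{\sigma_0}$; Lemma~\ref{lemma:denspol} then gives $\mu(F)(\sigma_0)\neq 0$, whence $\phi(F)>0$. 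After dividing by $\|F\|_{L^2}$, this yields a point of $\SS(\scrT)$ of strictly positive energy.

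Let $G\in\SS(\scrT)$ be a maximizer of $\phi$; by the previous paragraph, $\phi(G)>0$. To identify $G$ as a soliton, I would apply the Lagrange multiplier rule on the sphere in the Euclidean space $(\scrF_0(\scrT),\cG)$: the constrained critical point condition yields $\nabla^\circ\phi(G)=\kappa G$ for some $\kappa\in\RR$. Pairing this identity with $G$ in $\cG$, using that $\Pi$ is self-adjoint with $\Pi G=G$, and invoking the identity $D\phi|_G\cdot G=4\phi(G)$ of Formula~\eqref{eq:decpol}, we pin the multiplier down to $\kappa=4\phi(G)$. The quartic homogeneity $\phi(\lambda F)=\lambda^4\phi(F)$ then upgrades $\nabla^\circ\phi(G)=4\phi(G)G$ to the full soliton equation~\eqref{eq:soliton} for any nonzero rescaling of $G$, producing a soliton in $\scrF_0(\scrT)\setminus 0$ with $\phi\neq 0$. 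The only non-mechanical step is the explicit construction of the test element in paragraph two; the rest is compactness plus a one-line multiplier computation.
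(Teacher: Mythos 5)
Your argument is correct and follows essentially the same route as the paper: exhibit a non-isotropic element so that $\phi$ is not identically zero on the unit sphere $\SS(\scrT)$, take a maximizer $G$ by compactness, and observe that a constrained critical point of $\phi|_{\SS(\scrT)}$ is precisely a soliton with $\phi(G)>0$. You merely make explicit two steps the paper leaves implicit, namely the construction of a non-isotropic polyhedral map and the Lagrange multiplier computation $\kappa=4\phi(G)$, both of which are carried out correctly.
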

\begin{proof}
Clearly, $\phi:\SS(\scrT)\to\RR$ is not the zero map, as it is easy to
construct polyhedral maps $f\in\scrM(\scrT)$ which are not isotropic. In
particular $\phi(df) > 0$ in this case. We conclude that $\phi$ admits a
maximum at some point $G\in\SS(\scrT)$ with $\phi(G)>0$. If follows that
$G$ is a critical point of $\phi:\SS(\scrT)\to\RR$, which is to say a soliton. Furthermore,
$\phi(G) >0$ implies that $G$ is not isotropic.
\end{proof}

\begin{rmk}
	We introduce a family $F_t\in\scrF_0(\scrT)$ given by $F_t=r(t)G$, where
	$G\in\scrF_0(\scrT)$ is a soliton such that $\phi(G)>0$ and
	$$
r(t) = \frac 1{\sqrt{8(t-t_0)\phi(G)}}
$$
is defined on the interval $(t_0,+\infty)$, for some choice of $t_0\in\RR$.
It is readily checked that $F_t$ is a solution of the polyhedral modified
moment map flow. We notice that $F_t$ defines a non trivial solution of
flow, with the property that $\displaystyle \lim_{t\to+\infty}F_t=0$. 
\end{rmk}

\begin{proof}[Proof of Theorem~\ref{theo:main}, item (3)]
	The existence of non isotropic solitons in $\SS(\scrT)$ provides
	non trivial solutions of the polyhedral modified moment map flow as
	discussed above.
	This completes the proof of the theorem.
\end{proof}

\subsection{Numerical flow}
\label{sec:numflow}
The polyhedral modified moment map flow 
is an ordinary differential evolution equation. A numerical
version of the flow can be implemented on a computer. In this section
we outline all the  ingredients for such a computer program.
The code is being developed currently and is going to be released
very soon~\cite{JR1}.

For simplicity, we specialize to the case where $\Sigma$ is
 a quotient $2$-torus.
Let $\Gamma$ be the lattice  
$$
\Gamma = \gamma_1 \ZZ\oplus \gamma_2\ZZ \subset \CC
$$
where  $\gamma_1=1$ 
$\gamma_2=e^{i\frac\pi 3}$. The surface $\Sigma$,  defined by
$$
\Sigma=\CC/\Gamma,
$$
is endowed with the flat Kähler structure
$(\Sigma,g_\Sigma,J_\Sigma,\omega_\Sigma)$ deduced from the
canonical flat Kähler structure of $\CC$. 
The rhombus in $\CC$ with
vertices $0, \gamma_1,\gamma_1 +\gamma_2$ and $\gamma_2$ is a fundamental
domain for the action of $\Gamma$.
This rhombus  is composed of two equilateral triangles and each triangle has
area $\frac{\sqrt 3}4$. Using the $\Gamma$-action, 
we obtain the familiar tiling of $\CC$ by equilateral triangles:
\begin{figure}[H]
	\begin{tikzpicture}[scale=.5]
		%Draw a rhombus
		\draw (0,0) -- (5,8.66) -- (15,8.66) -- (10,0) -- (0,0);
%Draw bullets at the corners
		%		\draw (0.5, 0) -- (5.5, 8.66);
		\draw (1, 0) -- (6, 8.66);
%		\draw (1.5, 0) -- (6.5, 8.66);
		\draw (2, 0) -- (7, 8.66);
%		\draw (2.5, 0) -- (7.5, 8.66);
		\draw (3, 0) -- (8, 8.66);
%		\draw (3.5, 0) -- (8.5, 8.66);

		%Draw diagonal lines
		\draw (4.0, 0) -- (9.0, 8.66);
%		\draw (4.5, 0) -- (9.5, 8.66);
		\draw (5.0, 0) -- (10, 8.66);
%		\draw (5.5, 0) -- (10.5, 8.66);

		\draw (6.0, 0) -- (11, 8.66);
		\draw (7.0, 0) -- (12, 8.66);
		\draw (8.0, 0) -- (13, 8.66);
		\draw (9.0, 0) -- (14, 8.66);
%Draw the second diagonal directions
		\draw (1, 0) -- (.5, 0.866);

		\draw (2, 0) -- (1, 1.732);
		\draw (3, 0) -- (1.5, 2.598);
		\draw (4, 0) -- (2, 3.464);
		\draw (5, 0) -- (2.5, 4.330);
		\draw (6, 0) -- (3, 5.196);
		\draw (7, 0) -- (3.5, 6.062);
		\draw (8, 0) -- (4, 6.928);
		\draw (9, 0) -- (4.5, 7.794);
		\draw (10, 0) -- (5, 8.66);
		%shortening the diagonals now
		\draw (10.5, 0.866) -- (6, 8.66);

		\draw (11.0, 1.732) -- (7, 8.66);
		\draw (11.5, 2.598) -- (8, 8.66);
		\draw (12.0, 3.464) -- (9, 8.66);
		\draw (12.5, 4.330) -- (10, 8.66);
		\draw (13.0, 5.196) -- (11, 8.66);
		\draw (13.5, 6.062) -- (12, 8.66);

		\draw (14.0, 6.928) -- (13, 8.66);
		\draw (14.5, 7.794) -- (14, 8.66);
		% and now the horizontal lines
		\draw (10.5, 0.866) -- (.5, 0.866);
		\draw (11, 1.732) -- (1, 1.732);
		\draw (11.5, 2.598) -- (1.5, 2.598);
		\draw (12, 3.464) -- (2, 3.464);
		\draw (12.5, 4.330) -- (2.5, 4.330);
		\draw (13, 5.196) -- (3, 5.196);
		\draw (13.5, 6.062) -- (3.5, 6.062);
		\draw (14, 6.928) -- (4, 6.928);
		\draw (14.5, 7.794) -- (4.5, 7.794);
	\end{tikzpicture}
  \caption{Triangulation of $\CC$ by equilateral triangles}
	\label{fig:tri}
\end{figure}
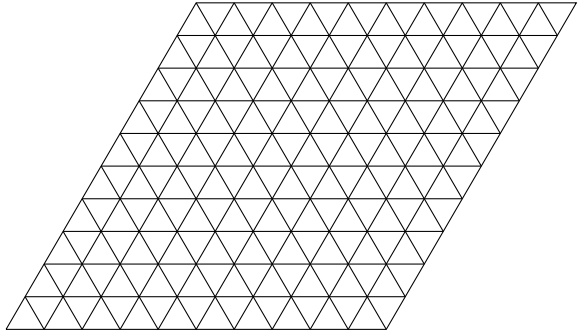
This tiling is a  simplicial decomposition of $\CC$ and can be
regarded as a triangulation $\scrT'$ of $\CC$.
Let $N$ be a positive integer. We define the lattice
$$
\Gamma_N = N^{-1} \Gamma,
$$
understood as the set of vertices of the rescaled  triangulation $\scrT'_N$
of $\scrT'$, by a factor $N^{-1}$.
 By
Definition, the triangulation $\scrT'_N$  is $\Gamma$-invariant. Hence, $\scrT'_N$ defines a triangulation of the quotient
$\Sigma$, denoted
$\scrT_N$, with each triangle of area $\frac{\sqrt 3}{4N^2}$.

We consider the particular space $V=\CC^2$ 
endowed with its canonical structure of Hermitian affine space and  the
coordinates $(x_1+ix_2,x_3+ix_4)\in V$, where $x_i\in\RR$.
 The symplectic form $\omega_V$ is given in coordinates by the formula
$$
\omega_V=dx_1\wedge dx_2+dx_3\wedge dx_4
$$
and the almost complex structure $i:\vecV\to\vecV$, given  by the
multiplication by $i$ satisfies the identities
\begin{equation}
	\label{eq:cxx}
i \frac\del{\del x_1}= \frac\del{\del x_2} \quad \mbox { and }\quad   i\frac\del{\del x_3}
=\frac\del{\del x_4}.
\end{equation}

Every $F\in\scrF(\scrT_N)$ is given by a collection
$F=(F_\sigma)_{\sigma\in\scrK_2}$ where $F_\sigma\in\vec \sigma^*\otimes\vecV$.
We use the canoncial coordinates $z=u_1+i u_2\in \CC$ on the universal
cover $\CC$ 
of $\Sigma$. 
By definition we have
\begin{equation}
	\label{eq:cxu}
-du_1\circ J_\Sigma = du_2 \mbox{ and } -du_2\circ J_\Sigma = -du_1
\end{equation}
The $du_i\otimes\frac\del{\del x_j}$ provide a $g_\sigma$-orthonormal
basis of $\vec\sigma^*\otimes\vecV$ for each facet of the
triangulation $\scrT_N$. 
Hence, $F_\sigma$ admits a decomposition  
\begin{equation}
	\label{eq:coord}
	F_\sigma  = 
	\sum_{\substack{1\leq i \leq 2\\
		1\leq j\leq 4}} 
	F_{\sigma ij}  du_i\otimes
\frac\del{\del x_j}
\end{equation}
where $F_{\sigma ij}\in\RR$. 
In particular, for $F, H\in\scrF(\scrT_N)$, we have
$$
\ipp{F,H} = \sum_{\substack{1\leq i \leq 2\\
1\leq j\leq 4\\
\sigma }} F_{\sigma ij} H_{\sigma ij} \frac{\sqrt
3}{4N^2}
$$
where the sum is taken over all the facets $\sigma$ of $\scrT_N$.

The space $\scrF_0(\scrT_N)$ is the subspace of exact
$\vecV$-valued Whitney forms in $\scrF(\scrT_N)$.
We need to construct an orthonormal basis of this subspace to compute the
matrix of the projector $\Pi:\scrF(\scrT_N)\to\scrF_0(\scrT_N)$.

We consider the  affine canonical base of $V$ given by the points
$e_0=0$ and $e_j= e_0+\frac\del{\del x_j}$ for $1\leq j\leq 4$.
For each vertex $\sigma_0$ of $\scrT_N$ and $1\leq j\leq 4$, we define the polyhedral
 function $f^{\sigma_0 j}:\Sigma\to V$
given by
$$
f^{{\sigma_0}j}(\sigma_0')=
\left \{
	\begin{array}{ll}
		e_j& \mbox{if $\sigma_0=\sigma'_0$,}\\
		e_0&\mbox{otherwise.}
	\end{array}
\right .$$
for every vertex $\sigma_0'$ of $\scrT_N$.
The above function extends uniquely as a polyhedral function
$f^{\sigma_0 j}\in\scrM(\scrT_N)$
and  defines the exact differential 
$$
F^{\sigma_0 j} = df^{ \sigma_0 j} \in\scrF_0(\scrT_N).
$$
The family $F^{\sigma_0 j}$ for $\sigma_0\in\scrK_0$ and $1\leq j\leq 4$ spans $\scrF_0(\scrT_N)$, by definition.
The only relations between the $F^{\sigma j}$ are the $4$ relations given
by
$$
\sum_{\sigma_0} F^{\sigma_0 j}=0
$$
for $1\leq j\leq 4$,
where the sum is taken over all the vertices of the triangulation
$\scrT_N$.
We obtain the following lemma:
\begin{lemma}
	\label{lemma:basis}
	For a fixed vertex $\sigma'_0$ of the triangulation $\scrT_N$, The family 
	$$
	F^{ \sigma_0 j}\in\scrF_0(\scrT_N),$$
	where $1\leq j\leq 4$
	and $\sigma_0$ belongs to the vertices of $\scrT_N$ with $\sigma_0\neq
	\sigma'_0$,
	is a basis of $\scrF_0(\scrT_N)$.
\end{lemma}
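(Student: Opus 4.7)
The plan is to prove Lemma~\ref{lemma:basis} by a dimension count combined with a precise identification of the linear relations among the full family $(F^{\sigma_0 j})_{\sigma_0\in\scrK_0,\, 1\leq j\leq 4}$, and then removing exactly four of the generators.

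First, I would establish that a polyhedral map $f\in\scrM(\scrT_N)$ is uniquely determined by its values at the vertices of $\scrT_N$: on each facet, $f$ is the affine extension of its three vertex values, and agreement along shared edges is automatic since an edge is determined by its two endpoints. Consequently, the underlying vector space of $\scrM(\scrT_N)$ has real dimension $4|\scrK_0|$, and the family $(f^{\sigma_0 j})$ is a basis, since any polyhedral map $f$ with $f(\sigma_0')=\sum_j f_j(\sigma_0')\, e_j$ is
$$
f=\sum_{\sigma_0'\in\scrK_0}\sum_{j=1}^4 f_j(\sigma_0')\, f^{\sigma_0' j}.
$$

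Second, the differential $d:\scrM(\scrT_N)\to\scrF_0(\scrT_N)$ is surjective by definition of $\scrF_0(\scrT_N)$, and its kernel is the four-dimensional subspace of constant $\vecV$-valued maps: indeed, if $df$ vanishes on every facet, then $f$ is constant on each facet, and continuity together with connectedness of $\Sigma$ makes $f$ globally constant. By the rank-nullity theorem,
$\dim\scrF_0(\scrT_N)=4(|\scrK_0|-1)$,
which matches the cardinality of the subfamily in the lemma.

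Third, I would pin down all linear relations among the $F^{\sigma_0 j}$. For each $1\leq j\leq 4$, the sum $\sum_{\sigma_0\in\scrK_0} f^{\sigma_0 j}$ takes the value $e_j$ at every vertex, hence equals the constant map $e_j$, whose differential vanishes. This produces four relations $\sum_{\sigma_0}F^{\sigma_0 j}=0$; these are linearly independent, as they involve four independent coordinates of $\vecV$. Since $\ker d$ has dimension four, these four relations exhaust all linear relations among the full family $(F^{\sigma_0 j})$. Fixing a vertex $\sigma_0'$, the $j$-th relation expresses $F^{\sigma_0' j}=-\sum_{\sigma_0\neq\sigma_0'}F^{\sigma_0 j}$, so the proposed subfamily still spans $\scrF_0(\scrT_N)$; as its cardinality equals the dimension, it is a basis.

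The argument reduces entirely to linear algebra, and I do not expect a serious obstacle. The only point requiring a moment of care is the identification of $\ker d$ with the constant maps, which uses connectedness of $\Sigma$ and the fact that an affine map with vanishing differential is constant on each simplex.
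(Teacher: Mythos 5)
Your proof is correct and follows essentially the same route the paper takes (spanning by the full family $F^{\sigma_0 j}$, identification of the four relations $\sum_{\sigma_0}F^{\sigma_0 j}=0$, then discarding the generators at one vertex), which the paper only asserts in the paragraph preceding the lemma. Your addition of the rank--nullity count $\dim\scrF_0(\scrT_N)=4(|\scrK_0|-1)$, using that $\ker d$ consists of the constant maps on the connected surface $\Sigma$, is exactly the justification the paper leaves implicit, and it correctly shows the four exhibited relations exhaust all relations.
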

However, the family $F^{\sigma_0 j}$ is not orthonormal and we have to use
the Gram-Schmidt algorithm.
The
differential $F^{\sigma_0 j}$ can be computed explicitly,
working on the universal cover $\CC$ of $\Sigma$, and assuming $\sigma_0=0$
for simplicity. There are six
triangles of $\scrT'_N$ with vertex $\sigma_0=0$. We can compute the differential of
$f^{\sigma_0 j}$ on each triangle.  In particular on 
the triangle $\sigma_1$  with vertices $0$, $\gamma_1/N$ and
$\gamma_2/N$ we have
\begin{equation}
	\label{eq:inp1}
F^{\sigma_0 j}_{\sigma_1} =\left (-N d u_1 - \frac {N}{\sqrt 6} d u_2 \right) 
\otimes\frac{\del }{\del x_j}.
\end{equation}
The differential on all the other triangles with vertex $\sigma_0=0$ is obtained by rotations of angle
$\frac\pi 3$. For every other triangle $\sigma$ not in the star of
$\sigma_0$, we have $F_\sigma^{\sigma_0 j}=0$.
In particular, on the triangle $\sigma_2$ with vertices $0$, $-\gamma_1/N$ and
$e^{2i\frac \pi 3}/N$, we
have the formula
\begin{equation}
	\label{eq:inp2}
F^{\sigma_0 j}_{\sigma_2} =\left (N d u_1 - \frac {N}{\sqrt 6} d u_2 \right) 
\otimes\frac{\del }{\del x_j}.
\end{equation}
\begin{lemma}
	\label{lemma:ipF}
	The family of Whitney forms 
	$F^{\sigma_0 j}\in\scrF_0(\scrT_N)$ described above
	satisfies the identities
	\begin{align*}
		\ipp{F^{\sigma_0 j},&F^{\sigma_0' j'}}=\\
		&\left\{
		\begin{array}{cl}
			\frac {7\sqrt 3}4 &\mbox{ if $i=i'$ and
			$\sigma_0=\sigma_0'$,}\\
			-\frac{5}{4\sqrt 3} & \mbox{if $i=i'$ and
			$\sigma_0$ and $\sigma_0'$ are joined by an
			edge,}\\
			0 & \mbox{otherwise.}
		\end{array}
		\right .
	\end{align*}
\end{lemma}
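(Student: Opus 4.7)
The plan is to reduce the lemma to an orthogonality-plus-support analysis followed by two explicit computations using Formulas~\eqref{eq:inp1} and~\eqref{eq:inp2} and the rotational/reflective symmetry of the equilateral tiling. First I would write
$$
\ipp{F^{\sigma_0 j},F^{\sigma_0' j'}} = \sum_{\sigma\in\scrK_2} \ip{F^{\sigma_0 j}_\sigma,F^{\sigma_0' j'}_\sigma}\,\area(\sigma),
\qquad \area(\sigma)=\tfrac{\sqrt 3}{4N^2},
$$
and observe that, by Formula~\eqref{eq:coord}, the only non-zero component of $F^{\sigma_0 j}_\sigma$ in the orthonormal basis $\{\tfrac{\del}{\del x_l}\}_{l=1}^{4}$ of $\vecV$ is the one indexed by $l=j$. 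Together with $g_V$-orthogonality this immediately gives the vanishing for $j\neq j'$.

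Next I would exploit the support of the basis elements. By construction, $F^{\sigma_0 j}_\sigma$ is identically zero on every facet $\sigma$ outside the closed star of $\sigma_0$ in $\scrT_N$, which consists of exactly the $6$ equilateral triangles meeting at $\sigma_0$. Intersecting two such stars shows that $\ip{F^{\sigma_0 j}_\sigma,F^{\sigma_0' j}_\sigma}$ can be non-zero only when $\sigma_0=\sigma_0'$ (contributions from all $6$ triangles of the common star) or when $\sigma_0$ and $\sigma_0'$ are joined by an edge (contributions from exactly the $2$ triangles sharing that edge). In all other configurations the inner product vanishes.

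It only remains to carry out the two non-trivial cases, both of which reduce to Formulas~\eqref{eq:inp1} and~\eqref{eq:inp2} by the $\tfrac{\pi}{3}$-rotational symmetry of the tiling $\scrT_N$ around any vertex, together with reflection symmetry across any edge. For the diagonal case $\sigma_0=\sigma_0'$, $j=j'$, each of the $6$ triangles in the star contributes the same value $|F^{\sigma_0 j}_{\sigma_1}|^2 = N^2 + \tfrac{N^2}{6}=\tfrac{7N^2}{6}$, so the total equals $6\cdot\tfrac{7N^2}{6}\cdot\tfrac{\sqrt 3}{4N^2}=\tfrac{7\sqrt 3}{4}$. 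For $\sigma_0$ and $\sigma_0'$ adjacent (say, along the edge $[0,\gamma_1/N]$), the two sharing triangles are mapped to one another by reflection across this edge, and on either one the pair of constant $1$-forms is read off directly from~\eqref{eq:inp1} and~\eqref{eq:inp2} (one vertex being the ``left'' and the other the ``right'' vertex of the triangle), producing a common pointwise pairing $-N^2+\tfrac{N^2}{6}=-\tfrac{5N^2}{6}$; summing $2\cdot(-\tfrac{5N^2}{6})\cdot\tfrac{\sqrt 3}{4N^2}= -\tfrac{5}{4\sqrt 3}$ yields the claimed value.

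There is no genuine obstacle here; the only place where care is required is the bookkeeping for the second case, namely consistently identifying which vertex of each shared triangle plays the role of the origin in~\eqref{eq:inp1}--\eqref{eq:inp2}. Once this template is fixed, the two contributions are automatically equal by the reflection symmetry, and the result is an elementary verification.
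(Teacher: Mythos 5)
Your proposal is correct and follows essentially the same route as the paper: a support/orthogonality analysis to dispose of the vanishing cases, then the explicit values from Formulas~\eqref{eq:inp1}--\eqref{eq:inp2} together with the rotational (resp.\ reflection) symmetry of the tiling and the facet area $\frac{\sqrt 3}{4N^2}$ to get $\frac{7\sqrt 3}{4}$ and $-\frac{5}{4\sqrt 3}$. The only cosmetic difference is that you justify the $j\neq j'$ case by $g_V$-orthogonality of the coordinate directions, which is in fact a cleaner phrasing than the paper's appeal to ``disjoint supports''.
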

\begin{proof}
	The third case is clear, since the families
	$(F_\sigma^{\sigma_0 j})$
	and $(F_\sigma^{\sigma_0' j'})$ have disjoint supports for $j\neq
	j'$ or if $\sigma_0$ and $\sigma'_0$ do not belong to the same
	facet of $\scrT_N$.

	We assume now that, on the contrary,
	$j=j'$ and $\sigma_0$ and $\sigma'_0$ belong to the same facet.
	Then either $\sigma_0=\sigma'_0$ or $\sigma_0$ and $\sigma'_0$ are
	the two ends of an edge  in $\scrT_N$.

	In the first case, working on the universal cover and assuming
	$\sigma_0=0$, Formula~\eqref{eq:inp1} 
	shows that $|F^{\sigma_0 j}_{\sigma_1}|^2= \frac{7N^2}6$. Using the
	fact that the area of a facet is $\frac{\sqrt 3}{4N^2}$ and that
	there are $6$ facets in the star of $\sigma_0$, we find
\begin{equation*}
\|F^{\sigma_0 j}\|^2_{L^2}=\frac{7\sqrt 3}4.
\end{equation*}
In the second case, working on the universal cover, we may assume that
	$\sigma_0'=\frac{\gamma_1}N$. By Formulas \eqref{eq:inp1} 
	and~\eqref{eq:inp2}, we deduce that 
	$\ip{F^{\sigma_0 j}_{\sigma_1},
	F^{\sigma_0' j}_{\sigma_1}}=-\frac{5N^2}6$.
	There is a second facet of $\scrT_N$ in the common support of $F^{\sigma_0 j}$
	and $F^{\sigma_0' j}$. By symmetry, the inner product is the same
	on the second facet. Since the area of each triangle is equal to
	$\frac{\sqrt 3}{4N^2}$, we obtain
\begin{equation*}
\ipp{F^{\sigma_0 j},F^{\sigma_0' j}}
	=-\frac{5N^2}{6}\cdot 2\cdot \frac{\sqrt 3}{4N^2}
	= -\frac 5{4\sqrt 3}.
\end{equation*}
\end{proof}
The family $F^{\sigma_0 j}$
provides a basis of $\scrF_0(\scrT_N)$ thanks to
Lemma~\ref{lemma:basis} and
the Gram-Schmidt method gives an algorithm  to construct an orthonomal
basis on $\scrF_0(\scrT_N)$ from this data.
Furthermore, the Gram-Schmidt method is pretty
straightforward regarding computer implementation thanks to the explicit
formulas of
Lemma~\ref{lemma:ipF}.

Eventually,
the matrix of the  endomorphism 
$$\scrR:\scrF_N(\scrF_N)\to\scrF(\scrT_N)$$
can be explicitely computed in the coordinates 
given by \eqref{eq:coord}, using Formula~\eqref{eq:cxu} and Formula~\eqref{eq:cxx} as
$\scrR F = -iF\circ J_\Sigma$, by definition.
Thus, we have an algorithm to compute
$\mu(F)= -\frac 12\ip{\scrR F,F}$
as well as
$\nabla \phi(F)= -\mu(F)\scrR F$.
We also have a matrix representation for $\Pi$ via Gram-Schmidt and we can compute
$\nabla^\circ \phi(F) =\Pi(\nabla(F))$.
It is now easy to find approximate solutions of the polyhedral modified
moment map flow~\eqref{eq:mmmff} via the Euler method. Similarly we can apply the Euler
approximation method to the polyhedral renormalized flow~\eqref{eq:ren}.

\vspace{10pt}
\bibliographystyle{abbrv}
\bibliography{dislagflow}

\begin{thebibliography}{10}

\bibitem{AB}
M.~F. Atiyah and R.~Bott.
\newblock The {Yang}-{Mills} equations over {Riemann} surfaces.
\newblock {\em Philos. Trans. R. Soc. Lond., Ser. A}, 308:523--615, 1983.

\bibitem{Ban}
A.~Banyaga.
\newblock {\em The structure of classical diffeomorphism groups}, volume 400 of
  {\em Math. Appl., Dordr.}
\newblock Dordrecht: Kluwer Academic Publishers, 1997.

\bibitem{BD}
M.~Bertelson and J.~Distexhe.
\newblock {PL} approximations of symplectic manifolds.
\newblock {\em J. Symplectic Geom.}, 2024.
\newblock to appear.

\bibitem{Don99}
S.~K. Donaldson.
\newblock Moment maps and diffeomorphisms.
\newblock {\em Asian J. Math.}, 3(1):1--15, 1999.
\newblock Sir Michael Atiyah: a great mathematician of the twentieth century.

\bibitem{EM02}
Y.~Eliashberg and N.~Mishachev.
\newblock {\em Introduction to the {$h$}-principle}, volume~48 of {\em Graduate
  Studies in Mathematics}.
\newblock American Mathematical Society, Providence, RI, 2002.

\bibitem{Eto}
S.~Etourneau.
\newblock Approximation {$C^1$} d'immersions isotropes lisses par des
  immersions isotropes {PL}.
\newblock {\em Nantes University, PhD thesis}, 2023.

\bibitem{Gra}
B.~Gratza.
\newblock Piecewise linear approximations in symplectic geometry.
\newblock {\em ETH Zurich, PhD thesis}, 1998.

\bibitem{Gro85}
M.~Gromov.
\newblock Pseudo holomorphic curves in symplectic manifolds.
\newblock {\em Invent. Math.}, 82:307--347, 1985.

\bibitem{Gro86}
M.~Gromov.
\newblock {\em Partial differential relations}, volume~9 of {\em Ergeb. Math.
  Grenzgeb., 3. Folge}.
\newblock Springer, Cham, 1986.

\bibitem{JR1}
F.~Jauberteau and Y.~Rollin.
\newblock Numerical flow for polyhedral isotropic surfaces.
\newblock {\em In preparation}, 2024.

\bibitem{JRT}
F.~Jauberteau, Y.~Rollin, and S.~Tapie.
\newblock Discrete geometry and isotropic surfaces.
\newblock {\em M{\'e}m. Soc. Math. Fr., Nouv. S{\'e}r.}, 161:1--99, 2019.

\bibitem{Joy}
D.~D. Joyce.
\newblock {\em Compact manifolds with special holonomy}.
\newblock Oxford Math. Monogr. Oxford: Oxford University Press, 2000.

\bibitem{Lee76}
J.~A. Lees.
\newblock On the classification of {Lagrange} immersions.
\newblock {\em Duke Math. J.}, 43:217--224, 1976.

\bibitem{MDS}
D.~McDuff and D.~Salamon.
\newblock {\em Introduction to symplectic topology}, volume~27 of {\em Oxf.
  Grad. Texts Math.}
\newblock Oxford: Oxford University Press, 3rd edition edition, 2016.

\bibitem{KFM}
D.~Mumford, J.~Fogarty, and F.~Kirwan.
\newblock {\em Geometric invariant theory.}, volume~34 of {\em Ergeb. Math.
  Grenzgeb.}
\newblock Berlin: Springer-Verlag, 3rd enl. ed. edition, 1994.

\bibitem{Pan}
D.~Panov.
\newblock Polyhedral {K{\"a}hler} manifolds.
\newblock {\em Geom. Topol.}, 13(4):2205--2252, 2009.

\bibitem{Rol22}
Y.~Rollin.
\newblock Polyhedral approximation by {Lagrangian} and isotropic tori.
\newblock {\em J. Symplectic Geom.}, 20(6):1349--1383, 2022.

\bibitem{Rol24}
Y.~Rollin.
\newblock Symplectic maps and hyper{K}\"ahler moment map geometry, 2024.

\bibitem{Thu}
W.~P. Thurston.
\newblock Shapes of polyhedra and triangulations of the sphere.
\newblock In {\em The Epstein Birthday Schrift dedicated to David Epstein on
  the occasion of his 60th birthday}, pages 511--549. Warwick: University of
  Warwick, Institute of Mathematics, 1998.

\bibitem{Tro}
M.~Troyanov.
\newblock Les surfaces euclidiennes {\`a} singularit{\'e}s coniques.
  ({Euclidean} surfaces with cone singularities).
\newblock {\em Enseign. Math. (2)}, 32:79--94, 1986.

\bibitem{Whi}
H.~Whitney.
\newblock {\em Geometric integration theory}, volume~21 of {\em Princeton Math.
  Ser.}
\newblock Princeton University Press, Princeton, NJ, 1957.

\end{thebibliography}

\end{document}